\documentclass[suppldata]{interact}

\usepackage{epstopdf}
\usepackage[caption=false]{subfig}
\usepackage{color}

\usepackage[numbers,sort&compress]{natbib}
\bibpunct[, ]{[}{]}{,}{n}{,}{,}
\makeatletter
\def\NAT@def@citea{\def\@citea{\NAT@separator}}
\makeatother

\theoremstyle{plain}
\newtheorem{theorem}{Theorem}[section]
\newtheorem{lemma}[theorem]{Lemma}
\newtheorem{corollary}[theorem]{Corollary}
\newtheorem{proposition}[theorem]{Proposition}

\theoremstyle{definition}
\newtheorem{definition}[theorem]{Definition}
\newtheorem{example}[theorem]{Example}

\theoremstyle{remark}
\newtheorem{remark}{Remark}

\begin{document}


\title{The $C$-Numerical Range in Infinite Dimensions}

\author{
\name{Gunther Dirr\textsuperscript{a} and Frederik vom Ende\textsuperscript{b}\thanks{CONTACT Gunther Dirr. Email: dirr@mathematik.uni-wuerzburg.de, Frederik vom Ende (corresponding author). Email: frederik.vom-ende@tum.de}}
\affil{\textsuperscript{a}Department of Mathematics, University of W{\"u}rzburg, 97074 W{\"u}rzburg, Germany\\
\textsuperscript{b}Department of Chemistry, TU Munich, 85747 Garching, Germany}
}

\maketitle

\begin{abstract}
In infinite dimensions and on the level of trace-class operators $C$ rather than matrices, we show that the closure of the $C$-numerical range $W_C(T)$ is always star-shaped with respect to the set $\operatorname{tr}(C)W_e(T)$, where $W_e(T)$ denotes the essential numerical range of the bounded operator $T$. Moreover, the closure of $W_C(T)$ is convex if either $C$ is normal with collinear eigenvalues or if $T$ is essentially self-adjoint. In the case of compact normal operators, the $C$-spectrum of $T$ is a subset of the $C$-numerical range, which itself is a subset of the convex hull of the closure of the $C$-spectrum. This convex hull coincides with the closure of the $C$-numerical range if, in addition, the eigenvalues of $C$ or $T$ are collinear.
\end{abstract}

\begin{keywords}
$C$-numerical range; $C$-spectrum; essential numerical range; 
\end{keywords}
\begin{amscode}
47A12, 15A60
\end{amscode}

\section{Introduction}
The $C$-numerical range has significant impact on quantum control and quantum information theory 
since the expression $\operatorname{tr}(\rho A)$ can be interpreted as the expectation
value of an observable $A$ with respect to the state $\rho$, that is to say, as the expectation value of a measurement $A$
taken on a quantum system in state $\rho$. 
While in standard quantum mechanics $A$ is self-adjoint and $\rho$ is a (trace-class) density operator, 
there are in fact important applications where $A$ or $\rho$ (or both) are allowed to be non-self-adjoint. Maximizing the absolute value \cite{vonNeumann} or the real part of
$\operatorname{tr}(\rho U^\dagger AU)$ over the unitary orbit of $A$ relate to different optimization problems in the Euclidean geometry of the $C$-numerical range
\cite{article_tsh,TSH_Optimization_QI}.

In the finite-dimensional case, where $A$ and $C$ are assumed to be complex $n \times n$ matrices,
the $C$-numerical range of $A$ is defined by
\begin{align}\label{c_num_range_findim}
W_C(A)=\lbrace\operatorname{tr}(CU^\dagger AU)\,|\,U\in\mathbb C^{n\times n}\text{ unitary}\rbrace\,.
\end{align}
Originally, it was introduced in \cite{GoldbergStraus} as a generalization of the $c$-numerical
range \cite{article_westwick} and the classical numerical range \cite{hausdorff,toeplitz}. Important
properties of the $C$-numerical range are convexity if $C$ is normal with collinear eigenvalues
\cite{article_westwick,article_poon} and star-shapedness with respect to 
$(\operatorname{tr}(C)\operatorname{tr}(A)/n)$ for arbitrary complex $C$, cf.~\cite{article_cheungtsing}. 
For a comprehensive survey, we refer to \cite{article_li_radii}.\medskip

In this work, let $\mathcal H$ be an infinite-dimensional separable complex Hilbert space, $C$ some
trace-class operator on $\mathcal H$, and $T$ some bounded linear operator on $\mathcal H$.
Thus one may introduce the $C$-numerical range of $T$ as follows
\begin{align}\label{eq:WCA-inf-dim}
W_C (T)=\lbrace \operatorname{tr}(CU^\dagger TU)\,|\,U\in\mathcal B(\mathcal H)\text{ unitary}\rbrace\,,
\end{align}
where $\mathcal B(\mathcal H)$ denotes the set of all bounded linear operators acting on $\mathcal H$. 
Clearly, this is a generalization of the finite-dimensional case. Here we take advantage of the fact that
the set of all trace-class operators is a two-sided ideal in the $C^*$-algebra $\mathcal B(\mathcal H)$.
In this setting, however, symmetry in $C$ and $T$ is lost. If one wants to preserve symmetry one could
choose $C$ and $T$ to be Hilbert-Schmidt operators, a direction not pursued in this paper.\medskip

The goal of this paper is to carry over star-shapedness 
or convexity of $W_C(A)$ to the infinite-dimensional setting. Interim results on this subject were achieved by Westwick \cite{article_westwick} and Hughes \cite{article_hughes} for the $c$-numercal range and by Jones \cite{article_jones} for the
$C$-numerical range. Jones, however, pursued a different approach in \cite{article_jones}.
For $C\in\mathbb C^{k\times k}$ and $T\in\mathcal B(\mathcal H)$ he introduced the set
\begin{align}\label{eq:jones_1}
\Big\lbrace\sum\nolimits_{i,j=1}^kc_{ij}\langle f_j,Tf_i\rangle\,\Big|\,\lbrace f_1,\ldots,f_k\rbrace\text{ is orthonormal system in }\mathcal H\Big\rbrace
\end{align}
as the $C$-numerical range of $T$, where $\mathcal H$ can be any infinite-dimensional complex Hilbert
space, and proved that its closure is star-shaped. In doing so,
the essential numerical range $W_e(T)$, or more precisely, the set $\operatorname{tr}(C)W_e(T)$ turned out to be an appropriate replacement of the finite
dimensional star-center $(\operatorname{tr}(C)\operatorname{tr}(A)/n)$. The definition and basic properties of $W_e(T)$ are given, e.g. in \cite{bonsallduncan}.\medskip

This work is organized as follows: After some preliminaries on trace-class operators and set convergence,
we present our main results in Section \ref{sec:results}: (i) Star-shapedness and convexity of the closure of the $C$-numerical range \eqref{eq:WCA-inf-dim} are proved. (ii) A new characterization of $W_e(T)$
is derived which explains the role of $\operatorname{tr}(C)W_e(T)$ as set of star points.
(iii) Some results on the $C$-spectrum in infinite dimensions generalizing
well-known results for matrices \cite{article_marcus, article_sunder} are obtained and consequences for
the $C$-numerical range of compact normal operators $T$ are derived.

\section{Notation and Preliminaries}\label{sec:prelim}
Unless stated otherwise, here and henceforth $\mathcal X$ and $\mathcal Y$ are arbitrary 
infinite-dimensional complex Hilbert spaces while $\mathcal H$ and $\mathcal G$ are reserved
for infinite-dimensional separable complex Hilbert spaces (for short i.s.c. Hilbert spaces). 
Moreover, $\mathcal B(\mathcal X,\mathcal Y)$, $\mathcal F(\mathcal X,\mathcal Y)$, $\mathcal K(\mathcal X,\mathcal Y)$ and 
$\mathcal B^1(\mathcal X,\mathcal Y)$ denote the set of all bounded, finite-rank, compact and trace-class
operators between $\mathcal X$ and $\mathcal Y$, respectively. 

Scalar products are conjugate linear in the first argument and linear in the second one. Finally,
for an arbitrary set $S$, the terms $\overline{S}$ and $\operatorname{conv}(S)$ stand for its
closure and convex hull, respectively.

\subsection{Infinite-dimensional Hilbert Spaces and the Trace Class}
For a comprehensive introduction to infinite-dimensional Hilbert spaces and trace-class operators, we refer to, e.g.~\cite{berberian1976} and \cite{MeiseVogt}, respectively. Here, we recall only some
basic results which we will use frequently throughout this paper. \medskip

Let $(e_i)_{i\in I}$ be any orthonormal basis of $\mathcal X$ and let $x \in \mathcal X$. Then one 
has the well-known \textit{Fourier expansion} 
\begin{equation*}
x=\sum_{i\in I} \langle e_i,x\rangle e_i\,,
\end{equation*}
as well as \textit{Parseval's identity} 
\begin{equation*}
\sum_{i\in I}|\langle e_i,x\rangle|^2 = \Vert x\Vert^2
\end{equation*}
which reduces to \textit{Bessel's inequality} 
\begin{equation*}
\sum_{j\in J}|\langle f_j,x\rangle|^2  \leq  \Vert x\Vert^2
\end{equation*}
if $(f_j)_{j\in J}$ is any orthonormal system in $\mathcal X$ instead of an orthonormal basis.
Moreover, one has the following characterization and properties of unitary operators acting on $\mathcal X$:
\begin{itemize}
\item
$U\in\mathcal B(\mathcal X)$ is unitary if and only if $(Ue_i)_{i \in I}$ is an orthonormal basis of
$\mathcal X$.\vspace{4pt}
\item
The image $(Uf_j)_{j \in J}$ under a unitary operator $U$ again is an orthonormal system.\vspace{4pt}
\item
For any two finite orthonormal systems $(f_j)_{j = 1, \dots, n}$ and $(g_j)_{j = 1, \dots, n}$ there
exists unitary operator $U$ with $Uf_j = g_j$ for $j = 1, \dots, n$.
\end{itemize}

Generalizing the trace concept from finite-dimensional to infinite-dimensional Hilbert spaces leads
to the notion of trace-class operators. We need only the following two key results as can be found in, e.g. \cite[Chapter 16]{MeiseVogt}.

\begin{lemma}[Schmidt decomposition]\label{thm_1}
For each $C \in \mathcal K(\mathcal X,\mathcal Y)$, there exists a decreasing null sequence 
$(s_n(C))_{n\in\mathbb N}$ in $[0,\infty)$ and orthonormal systems $(f_n)_{n\in\mathbb N}$ in $\mathcal X$
and $(g_n)_{n\in\mathbb N}$ in $\mathcal Y$ such that
\begin{align*}
C = \sum_{n=1}^\infty s_n(C)\langle f_n,\cdot\rangle g_n\,,
\end{align*}
where the series converges in the operator norm.
\end{lemma}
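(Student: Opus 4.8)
The plan is to reduce the statement to the spectral theorem for compact, positive, self-adjoint operators applied to $C^*C$. First I would observe that $C^*C \in \mathcal B(\mathcal X)$ is self-adjoint, positive (since $\langle x,C^*Cx\rangle=\|Cx\|^2\ge 0$) and compact (as $\mathcal K(\mathcal X)$ is a two-sided ideal and $C^*$ is compact whenever $C$ is). The spectral theorem therefore yields an at most countable orthonormal system of eigenvectors of $C^*C$, spanning $\overline{\operatorname{ran}C^*C}=(\ker C^*C)^\perp=(\ker C)^\perp$, with associated eigenvalues $\lambda_n\ge 0$; arrange them so that $(\lambda_n)_n$ is decreasing, and if only finitely many are nonzero, extend the system of eigenvectors to an orthonormal system $(f_n)_{n\in\mathbb N}$ and pad the eigenvalue list by zeros. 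Set $s_n(C):=\sqrt{\lambda_n}$, which by construction is a decreasing null sequence in $[0,\infty)$ (the singular values of $C$).

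Next I would construct $(g_n)_{n\in\mathbb N}$. For every index $n$ with $s_n(C)>0$ put $g_n:=s_n(C)^{-1}Cf_n$; then $\langle g_n,g_m\rangle=s_n(C)^{-1}s_m(C)^{-1}\langle f_n,C^*Cf_m\rangle=s_m(C)s_n(C)^{-1}\langle f_n,f_m\rangle=\delta_{nm}$, so these vectors form an orthonormal system in $\mathcal Y$. For the remaining indices (those with $s_n(C)=0$) I would complete $(g_n)$ to an orthonormal system in $\mathcal Y$ arbitrarily, which is harmless since those summands vanish anyway; note also that $s_n(C)=0$ forces $\lambda_n=0$, hence $C^*Cf_n=0$, hence $f_n\in\ker C$.

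Then I would verify the claimed identity pointwise and upgrade the convergence to the operator norm. Given $x\in\mathcal X$, write $x=x_0+\sum_n\langle f_n,x\rangle f_n$ with $x_0\in(\operatorname{span}\{f_n\})^\perp\subseteq\ker C^*C=\ker C$; applying $C$ and using continuity gives $Cx=\sum_n\langle f_n,x\rangle Cf_n=\sum_n s_n(C)\langle f_n,x\rangle g_n$, i.e.\ $C=\sum_n s_n(C)\langle f_n,\cdot\rangle g_n$ as a pointwise (strong) limit. For the norm estimate, let $C_N:=\sum_{n=1}^N s_n(C)\langle f_n,\cdot\rangle g_n$; one checks $(C-C_N)f_n=0$ for $n\le N$, $(C-C_N)f_n=s_n(C)g_n$ for $n>N$, and $(C-C_N)x_0=0$, so that for every $x$ Parseval/Bessel yields $\|(C-C_N)x\|^2=\sum_{n>N}s_n(C)^2|\langle f_n,x\rangle|^2\le s_{N+1}(C)^2\|x\|^2$, with equality for $x=f_{N+1}$; hence $\|C-C_N\|=s_{N+1}(C)\to 0$.

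The only genuinely delicate point is the bookkeeping around $\ker C$ and the finite-rank case: one must make sure that the eigenvectors of $C^*C$ for the nonzero eigenvalues, together with the definition $g_n=s_n(C)^{-1}Cf_n$, reconstruct all of $C$, and that padding the singular-value sequence with zeros (and completing the $g_n$ arbitrarily) changes nothing. Beyond this, everything is the standard spectral-theorem argument plus the elementary tail estimate above; since the result is entirely classical, one may alternatively just invoke \cite[Chapter 16]{MeiseVogt}.
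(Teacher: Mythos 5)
Your proof is correct: it is the standard spectral-theorem argument (diagonalize the compact positive operator $C^*C$, set $s_n(C)=\sqrt{\lambda_n}$ and $g_n=s_n(C)^{-1}Cf_n$, check orthonormality, and bound the tail operator norm by $s_{N+1}(C)$ via Bessel's inequality), and the delicate bookkeeping around $\ker C=\ker C^*C$ and the zero-padding in the finite-rank case is handled properly. The paper itself offers no proof of this lemma --- it is quoted directly from \cite[Chapter 16]{MeiseVogt} --- and your argument is exactly the classical one found there, so there is nothing substantive to compare.
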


\noindent
Then the \emph{trace class} $\mathcal B^1(\mathcal X,\mathcal Y)$ is defined by
\begin{align*}
\mathcal B^1(\mathcal X,\mathcal Y)
:= \Big\lbrace C \in\mathcal K(\mathcal X,\mathcal Y)\,\Big|\,\sum\nolimits_{n=1}^\infty s_n(C)<\infty\Big\rbrace\,.
\end{align*}
The \emph{singular numbers} $(s_n(C))_{n\in\mathbb N}$ in Lemma \ref{thm_1} are uniquely determined by $C$. 
However, this is obviously not true for the orthonormal systems $(f_n)_n$ and $(g_n)_n$. Furthermore, the trace norm
\begin{align*}
\nu_1(C) := \sum_{n=1}^\infty s_n(C)
\end{align*}
turns $\mathcal B^1(\mathcal X,\mathcal Y)$ into a Banach space. The trace class $\mathcal B^1(\mathcal X)$
constitutes -- just like the compact operators -- a two-sided ideal in the $C^*$-algebra of all bounded operators
$\mathcal B(\mathcal X)$. The next result is a simple consequence of \cite[Lemma 16.6.(6)]{MeiseVogt}.

\begin{lemma}\label{lemma_10}
For any $S,T\in\mathcal B(\mathcal X)$ and any $C\in\mathcal B^1(\mathcal X)$, one has
\begin{align*}
\nu_1(SCT)\leq\Vert S\Vert\nu_1(C)\Vert T\Vert\,.
\end{align*}
\end{lemma}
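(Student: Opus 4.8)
The plan is to reduce the trace-norm estimate to a pointwise bound on the singular numbers and then sum. Recall that the singular numbers admit the approximation-number description
\begin{align*}
s_n(C)=\inf\big\lbrace\Vert C-F\Vert\,\big|\,F\in\mathcal B(\mathcal X),\ \operatorname{rank}(F)\leq n-1\big\rbrace\,,
\end{align*}
valid for every $C\in\mathcal K(\mathcal X)$; this is essentially the content of the cited \cite[Lemma 16.6.(6)]{MeiseVogt}. First I would use this formula to show that
\begin{align*}
s_n(SCT)\leq\Vert S\Vert\,s_n(C)\,\Vert T\Vert\qquad\text{for all }n\in\mathbb N\,.
\end{align*}
Indeed, whenever $F\in\mathcal B(\mathcal X)$ has rank at most $n-1$, the operator $SFT$ has rank at most $n-1$ as well, so
\begin{align*}
s_n(SCT)\leq\Vert SCT-SFT\Vert=\Vert S(C-F)T\Vert\leq\Vert S\Vert\,\Vert C-F\Vert\,\Vert T\Vert\,;
\end{align*}
passing to the infimum over all such $F$ gives the claim.

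Next I would sum this inequality over $n\in\mathbb N$. Since $(s_n(C))_{n}$ is summable (because $C\in\mathcal B^1(\mathcal X)$), so is $(s_n(SCT))_{n}$; in particular $SCT$ is again trace class -- which one already knows from the ideal property recorded above -- and
\begin{align*}
\nu_1(SCT)=\sum_{n=1}^\infty s_n(SCT)\leq\Vert S\Vert\,\Vert T\Vert\sum_{n=1}^\infty s_n(C)=\Vert S\Vert\,\nu_1(C)\,\Vert T\Vert\,.
\end{align*}

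I do not expect a genuine obstacle here: every step is elementary, and the only point that needs a moment's care is the pointwise singular-number estimate, which is immediate once the approximation-number formula is at hand -- precisely why the authors phrase the lemma as ``a simple consequence of'' the cited result. Alternatively one may split the two-sided product into two one-sided steps, using $s_n(SCT)\leq\Vert S\Vert\,s_n(CT)$ and $s_n(CT)\leq\Vert T\Vert\,s_n(C)$ (each proved the same way, now with $SF$ resp.\ $FT$ of rank $\leq n-1$), but treating both sides at once as above is just as short.
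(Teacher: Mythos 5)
Your argument is correct and is exactly the route the paper intends: the paper gives no written proof but refers to the singular-number inequality $s_n(SCT)\leq\Vert S\Vert s_n(C)\Vert T\Vert$ from \cite[Lemma 16.6.(6)]{MeiseVogt}, which you rederive from the approximation-number description and then sum over $n$. The only point worth making explicit is that the approximation numbers coincide with the singular numbers from the Schmidt decomposition (standard for compact operators on Hilbert spaces, and indeed the content of the cited lemma), after which every step is as you describe.
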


\noindent
Now for arbitrary $C\in\mathcal B^1(\mathcal X)$, the trace of $C$ is defined via
\begin{align}\label{eq:trace}
\operatorname{tr}(C):=\sum\nolimits_{i\in I}\langle f_i,Cf_i\rangle\,,
\end{align}
where $(f_i)_{i\in I}$ can be any orthonormal basis of $\mathcal X$. The trace is well-defined as
one can show that the right-hand side of \eqref{eq:trace} does not depend on the choice of 
$(f_i)_{i\in I}$. Important properties are
\begin{align}
\operatorname{tr}(CT)&=\operatorname{tr}(TC)\label{eq:4_a}\\
\operatorname{tr}\big((\langle x,\cdot\rangle y)T\big)&=\langle x,Ty\rangle\nonumber\\
|\operatorname{tr}(CT)|&\leq\nu_1(C)\Vert T\Vert\label{eq:4}
\end{align}
for all $C\in\mathcal B^1(\mathcal X)$, $T\in\mathcal B(\mathcal X)$ and $x,y\in\mathcal X$.

\subsection{Set Convergence}
\label{subsec:set-convergence}

In order to transfer the known results about convexity and star-shapedness of the $C$-numerical
range of matrices to trace-class operators, we need some basic facts about set convergence. 
We will use the Hausdorff metric on compact subsets (of $\mathbb C$) and the associated notion
of convergence, see, e.g.~\cite{nadler1978}.\medskip

\noindent
The distance between $z \in \mathbb C$ and any non-empty compact subset $A \subseteq \mathbb C$ is
defined by
\begin{align}\label{eq.Hausdorff-1}
d(z,A) := \min_{w \in A} d(z,w) = \min_{w \in A} |z-w|\,.
\end{align}
Based on \eqref{eq.Hausdorff-1}, the \emph{Hausdorff metric} $\Delta$ on the set of all non-empty
compact subsets of $\mathbb C$ is given by
\begin{align*}
\Delta(A,B) := \max\Big\lbrace \max_{z \in A}d(z,B),\max_{z \in B}d(z,A) \Big\rbrace\,.
\end{align*}

\noindent
The following characterization of the Hausdorff metric will be essential throughout this paper.

\begin{lemma}\label{lemma_11}
Let $A,B \subset \mathbb C$ be two non-empty compact sets and let $\varepsilon > 0$. 
Then $\Delta(A,B) \leq \varepsilon$ if and only if for all $z \in A$, there exists $w \in B$ 
with $d(z,w) \leq \varepsilon$ and vice versa.
\end{lemma}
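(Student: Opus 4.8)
The plan is to unwind the definitions of $d(\cdot,\cdot)$ and $\Delta$; the only genuine input is that all extrema involved are attained because $A$ and $B$ are non-empty and compact and $w\mapsto|z-w|$ is continuous. First I would record that for a fixed non-empty compact set $B$ the map $z\mapsto d(z,B)$ is well-defined (the minimum in \eqref{eq.Hausdorff-1} is attained) and continuous --- indeed $1$-Lipschitz, since for $w\in B$ one has $d(z_1,B)\le|z_1-w|\le|z_1-z_2|+|z_2-w|$ and hence, taking the minimum over $w\in B$ and using symmetry, $|d(z_1,B)-d(z_2,B)|\le|z_1-z_2|$. Therefore $\max_{z\in A}d(z,B)$ (and, exchanging the roles of $A$ and $B$, also $\max_{z\in B}d(z,A)$) is attained, so $\Delta(A,B)$ is a well-defined non-negative real number.

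For the forward implication, assume $\Delta(A,B)\le\varepsilon$. Then in particular $\max_{z\in A}d(z,B)\le\varepsilon$, so $d(z,B)=\min_{w\in B}|z-w|\le\varepsilon$ for every $z\in A$; since this minimum is attained, there is $w\in B$ with $d(z,w)=d(z,B)\le\varepsilon$. The analogous argument starting from $\max_{z\in B}d(z,A)\le\varepsilon$ gives the ``vice versa'' part.

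For the converse, suppose that for every $z\in A$ there is some $w\in B$ with $d(z,w)\le\varepsilon$, and symmetrically. Fixing $z\in A$ and choosing such a $w$, we obtain $d(z,B)=\min_{w'\in B}|z-w'|\le|z-w|=d(z,w)\le\varepsilon$, hence $\max_{z\in A}d(z,B)\le\varepsilon$; the symmetric hypothesis likewise yields $\max_{z\in B}d(z,A)\le\varepsilon$, and therefore $\Delta(A,B)=\max\lbrace\max_{z\in A}d(z,B),\,\max_{z\in B}d(z,A)\rbrace\le\varepsilon$.

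The only ``obstacle'' is bookkeeping: one must be careful that the minimum defining $d(z,B)$ and the maxima defining $\Delta$ are genuinely attained, so that the inequalities can be converted into statements about actual points $w$ --- this is exactly where compactness and non-emptiness of $A$ and $B$ enter. Beyond that, the equivalence is immediate from the definitions.
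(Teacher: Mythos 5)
Your proof is correct and follows essentially the same route as the paper's: both simply unwind the definitions of $d(\cdot,\cdot)$ and $\Delta$, observing that $\Delta(A,B)\leq\varepsilon$ is equivalent to the two max-min inequalities, which in turn are equivalent to the pointwise statement. Your additional remarks on attainment of the extrema via compactness are a harmless (and slightly more careful) elaboration of what the paper leaves implicit.
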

\begin{proof}
By definition, $\Delta(A,B) \leq \varepsilon$ is equivalent to $\max_{z \in A}d(z,B)\leq\varepsilon$ and $\max_{z \in B}d(z,A) \leq\varepsilon$. This in turn means
\begin{align}\label{eq:hausdorff}
\max_{z \in A}\min_{w \in B} d(z,w)\leq\varepsilon\qquad\text{and}\qquad\max_{z \in B}\min_{w \in A} d(z,w) \leq\varepsilon\,.
\end{align}
Evidently, \eqref{eq:hausdorff} holds if and only if for all $z \in A$, there exists $w \in B$ 
with $d(z,w) \leq \varepsilon$ and vice versa.
\end{proof}

With this metric at hand, one can introduce the notion of convergence of a sequence 
$(A_n)_{n\in\mathbb N}$ of non-empty compact subsets. Alternatively, one can introduce the notion of Kuratowski convergence as
follows:\medskip

\noindent
Consider a sequence $(A_n)_{n\in\mathbb N}$ of non-empty compact subsets of $\mathbb C$ and define
\begin{itemize}
\item 
$\liminf_{n\to\infty} A_n$ as the set of all $z \in \mathbb C$ such that for all $\varepsilon > 0$ 
one has $B_\varepsilon(z)\cap A_n\neq\emptyset$ for all but finitely many indices.\vspace{4pt}
\item
$\limsup_{n\to\infty} A_n$ as the set of all $z \in \mathbb C$ such that for all $\varepsilon>0$
one has $B_\varepsilon(x)\cap A_n\neq\emptyset$ for infinitely many indices.
\end{itemize}

\noindent
If $\liminf_{n\to\infty} A_n = \limsup_{n\to\infty} A_n =:A$ one says that $(A_n)_{n\in\mathbb N}$ converges 
to $A$ and writes
\begin{align*}
\lim_{n\to\infty}A_n=A\,.
\end{align*}

\noindent
The following Lemma shows that both approaches are essentially equivalent, cf.~\cite[Thm. 0.7]{nadler1978}.

\begin{lemma}\label{lem:Hausdorff}
Let $(A_n)_{n\in\mathbb N}$ be a bounded sequence of non-empty compact subsets of $\mathbb C$.
\begin{itemize}
\item[(a)]
If $(A_n)_{n\in\mathbb N}$ converges to $A$ with respect to the Hausdorff metric, then
$\liminf_{n\to\infty} A_n = \limsup_{n\to\infty} A_n = A$.\vspace{4pt}
\item[(b)]
If $\liminf_{n\to\infty} A_n = \limsup_{n\to\infty} A_n =:A$, then $A$ is non-empty and compact and 
$(A_n)_{n\in\mathbb N}$ converges to $A$ with respect to the Hausdorff metric.
\end{itemize}
\end{lemma}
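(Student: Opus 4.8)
The plan is to prove the two implications of Lemma \ref{lem:Hausdorff} separately, in each case translating the Hausdorff-metric statement into the pointwise conditions defining $\liminf$ and $\limsup$ via Lemma \ref{lemma_11}, and invoking the boundedness hypothesis through the Bolzano--Weierstrass theorem whenever a limit point has to be produced.

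For part (a), assume $\Delta(A_n,A)\to 0$. The inclusion $\liminf_{n\to\infty}A_n\subseteq\limsup_{n\to\infty}A_n$ is immediate from the definitions. To see $A\subseteq\liminf_{n\to\infty}A_n$, fix $z\in A$ and $\varepsilon>0$: for all large $n$ one has $\Delta(A_n,A)<\varepsilon$, so Lemma \ref{lemma_11} produces a point of $A_n$ within $\varepsilon$ of $z$, whence $B_\varepsilon(z)\cap A_n\neq\emptyset$ for all but finitely many $n$. Conversely, if $z\notin A$, then $\delta:=d(z,A)>0$ because $A$ is compact, hence closed; for every large $n$ with $\Delta(A_n,A)<\delta/2$ each point of $A_n$ lies within $\delta/2$ of $A$ and thus at distance at least $\delta/2$ from $z$, so $B_{\delta/2}(z)\cap A_n=\emptyset$ eventually and $z\notin\limsup_{n\to\infty}A_n$. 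Chaining $A\subseteq\liminf_{n\to\infty}A_n\subseteq\limsup_{n\to\infty}A_n\subseteq A$ finishes part (a).

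For part (b), assume $\liminf_{n\to\infty}A_n=\limsup_{n\to\infty}A_n=:A$ and fix a closed ball containing all $A_n$. \emph{Non-emptiness:} choosing $a_n\in A_n$, boundedness gives a convergent subsequence $a_{n_k}\to z$, and then $z\in\limsup_{n\to\infty}A_n=A$. \emph{Compactness:} a direct argument from the definition shows $\limsup_{n\to\infty}A_n$ is always closed, and it is contained in the fixed ball, so $A$ is compact. \emph{Hausdorff convergence:} if $\Delta(A_n,A)\not\to 0$, then by Lemma \ref{lemma_11} there exist $\varepsilon>0$ and a subsequence along which, for each $k$, either (i) some $z_k\in A_{n_k}$ satisfies $d(z_k,A)>\varepsilon$, or (ii) some $w_k\in A$ satisfies $d(w_k,A_{n_k})>\varepsilon$; passing to a further subsequence we may assume one of the two alternatives holds for all $k$. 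In case (i), boundedness yields $z_{k_j}\to z$ with $d(z,A)\geq\varepsilon$, so $z\notin A$; but $z_{k_j}\in A_{n_{k_j}}$ forces $z\in\limsup_{n\to\infty}A_n=A$, a contradiction. In case (ii), compactness of $A$ yields $w_{k_j}\to w\in A=\liminf_{n\to\infty}A_n$, so $B_{\varepsilon/2}(w)$ meets $A_n$ for all but finitely many $n$; for $j$ large enough that $d(w_{k_j},w)<\varepsilon/2$ and $n_{k_j}$ exceeds all exceptional indices, the triangle inequality gives $d(w_{k_j},A_{n_{k_j}})<\varepsilon$, contradicting the choice of $w_{k_j}$.

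The only genuinely delicate point is the Hausdorff convergence in part (b): one has to extract the correct nested subsequences and, in case (ii), use the \emph{full} strength of the $\liminf$-condition — that $B_\varepsilon(w)$ meets $A_n$ for all but finitely many $n$, not merely infinitely many — which is precisely where the hypothesis $\liminf_{n\to\infty}A_n=\limsup_{n\to\infty}A_n$ is consumed. Boundedness of $(A_n)_{n\in\mathbb N}$ is indispensable as well (it enters via Bolzano--Weierstrass in non-emptiness, in compactness, and in case (i)), so the statement cannot be salvaged from compactness of the individual $A_n$ alone.
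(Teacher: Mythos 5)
The paper does not prove this lemma at all: it simply cites \cite[Thm.~0.7]{nadler1978}, so there is no in-paper argument to compare against. Your proof is correct and is the standard one. Part (a) correctly chains $A\subseteq\liminf_{n\to\infty}A_n\subseteq\limsup_{n\to\infty}A_n\subseteq A$, and part (b) handles non-emptiness, closedness of the $\limsup$, and the contradiction argument for Hausdorff convergence in the right order (establishing that $A$ is non-empty and compact \emph{before} invoking Lemma \ref{lemma_11} and before extracting a convergent subsequence from $A$ in case (ii)). The only cosmetic point: Lemma \ref{lemma_11} yields a point at distance $\leq\varepsilon$, while the $\liminf$/$\limsup$ definitions use open balls $B_\varepsilon(z)$; since you have the strict inequality $\Delta(A_n,A)<\varepsilon$ (or can run the argument with $\varepsilon/2$), this is harmless. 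Your closing remark correctly identifies where the full $\liminf$-hypothesis and the boundedness hypothesis are actually consumed.
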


\noindent
For reference, we finally state the following result which will be used frequently below.
A proof can be found in Appendix \ref{appendix_A}.

\begin{lemma}\label{lemma_5}
Let $(A_n)_{n\in\mathbb N}$ and $(B_n)_{n\in\mathbb N}$ be bounded sequences of non-empty compact subsets
of $\mathbb C$ such that $\lim_{n\to\infty}A_n = A$, $\lim_{n\to\infty}B_n = B$ and let $(z_n)_{n\in\mathbb N}$ be any sequence of complex numbers with $\lim_{n\to\infty}z_n = z$. Then the following statements hold.
\begin{itemize}
\item[(a)] 
If $A_n\subseteq B_n$ for all $n\in\mathbb N$, then $A \subseteq B$.\vspace{4pt}
\item[(b)] 
The sequence $(\operatorname{conv}(A_n))_{n\in\mathbb N}$ of compact subsets converges to $\operatorname{conv}(A)$, i.e.
\begin{align*}
\lim_{n\to\infty}\operatorname{conv}(A_n) = \operatorname{conv}(A)\,.
\end{align*}
\item[(c)] 
If $A_n$ is convex for all $n\in\mathbb N$, then $A$ is convex.\vspace{4pt}
\item[(d)] 
If $A_n$ is star-shaped with respect to $z_n$ for all $n\in\mathbb N$, then $A$ is star-shaped with respect to $z$.
\end{itemize}
\end{lemma}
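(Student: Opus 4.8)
The plan is to prove the four statements in order, leveraging the Kuratowski characterization of convergence (Lemma \ref{lem:Hausdorff}) since pointwise membership conditions are easiest to manipulate. For part (a), I would take any $z\in A=\lim_n A_n$; by the $\liminf$ characterization, for each $\varepsilon>0$ there are points $a_n\in A_n$ with $a_n\to z$. Since $A_n\subseteq B_n$, these $a_n$ also lie in $B_n$, so $z\in\liminf_n B_n=B$; hence $A\subseteq B$.

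For part (b), I would first argue that $\operatorname{conv}(A_n)$ is again a non-empty compact subset of $\mathbb C$ (Carath\'eodory in the plane: every point of $\operatorname{conv}(A_n)$ is a convex combination of at most three points of $A_n$, and the relevant parameter/point space is compact, so $\operatorname{conv}(A_n)$ is compact). Then I would show $\Delta(\operatorname{conv}(A_n),\operatorname{conv}(A))\le\Delta(A_n,A)$: using Lemma \ref{lemma_11}, if every point of $A_n$ is within $\varepsilon$ of a point of $A$, then writing an arbitrary element of $\operatorname{conv}(A_n)$ as $\sum_{k=1}^3\lambda_k a_k$ with $a_k\in A_n$ and replacing each $a_k$ by a nearby $b_k\in A$ yields $\sum_k\lambda_k b_k\in\operatorname{conv}(A)$ within $\varepsilon$, and symmetrically. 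Since $\Delta(A_n,A)\to0$, the claim follows. Part (c) is then immediate: if each $A_n$ is convex, then $\operatorname{conv}(A_n)=A_n$, so by (b) $\operatorname{conv}(A)=\lim_n\operatorname{conv}(A_n)=\lim_n A_n=A$, i.e. $A$ is convex.

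For part (d), suppose each $A_n$ is star-shaped with respect to $z_n$, and $z_n\to z$. First, $z\in A$: pick $a\in A_n$ arbitrarily (nonempty), the segment $[z_n,a]\subseteq A_n$, and letting $a\to$ something, or more directly, since $z_n\in A_n$ and $z_n\to z$, the $\limsup$/$\liminf$ characterization gives $z\in A$. Now fix any $w\in A$; I must show $[z,w]\subseteq A$. Take $\lambda\in[0,1]$ and set $p=\lambda z+(1-\lambda)w$. Since $w\in A=\liminf_n A_n$, choose $w_n\in A_n$ with $w_n\to w$. Then $p_n:=\lambda z_n+(1-\lambda)w_n$ lies in $A_n$ by star-shapedness, and $p_n\to\lambda z+(1-\lambda)w=p$, so $p\in\liminf_n A_n=A$. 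Hence $[z,w]\subseteq A$, proving star-shapedness of $A$ with respect to $z$.

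I expect the main obstacle to be part (b): one must be careful that $\operatorname{conv}(A_n)$ and $\operatorname{conv}(A)$ are genuinely compact (not merely bounded) before the Hausdorff-metric estimate even makes sense, and the convex-combination-approximation argument needs the uniform bound (Carath\'eodory number $3$ in $\mathbb R^2\cong\mathbb C$) so that finitely many perturbations suffice simultaneously. Once the inequality $\Delta(\operatorname{conv}(A_n),\operatorname{conv}(A))\le\Delta(A_n,A)$ is in hand — together with the fact that $\Delta(A_n,A)\to 0$, which is the content of Lemma \ref{lem:Hausdorff}(a) combined with the hypothesis $\lim_n A_n=A$ — everything else is routine. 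Parts (a), (c), (d) are short once one commits to working with the $\liminf$/$\limsup$ descriptions rather than the metric directly.
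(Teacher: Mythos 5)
Your proposal is correct and follows essentially the same route as the paper's proof: the pointwise sequence characterization of the Hausdorff limit for (a) and (d), Carath\'eodory's theorem with three points in the plane plus termwise perturbation of the convex coefficients for (b), and the identity $A_n=\operatorname{conv}(A_n)$ to deduce (c) from (b). The only cosmetic difference is that you package the key estimate in (b) as the inequality $\Delta(\operatorname{conv}(A_n),\operatorname{conv}(A))\le\Delta(A_n,A)$, which the paper proves but does not name.
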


\section{Results}\label{sec:results}

Let $\mathcal H$ denote an arbitrary infinite-dimensional separable complex (i.s.c.) Hilbert space. We define
the $C$-numerical range $W_C (T)$ of a bounded linear operator $T$ on $\mathcal H$, where $C$ can
be any trace-class operator on $\mathcal H$, as follows.

\begin{definition}\label{defi_1}
For any $C\in\mathcal B^1(\mathcal H),T\in\mathcal B(\mathcal H)$, let
\begin{align*}
W_C (T):=\lbrace \operatorname{tr}(CU^\dagger TU)\,|\,U\in\mathcal B(\mathcal H)\text{ unitary}\rbrace\,.
\end{align*}
\end{definition}

Throughout this paper we need some formalism to associate matrices with bounded operators on $\mathcal H$ 
and vice versa. In doing so, let $(e_n)_{n\in\mathbb N} $ be some orthonormal basis of $\mathcal H$ and let
$(\hat e_i)_{i=1}^n$ be the standard basis of $\mathbb C^n$. For any $n\in\mathbb N$ we define 
\begin{align}\label{Gamma}
\Gamma_n:\mathbb C^n\to \mathcal H,\qquad \hat{e_i}\mapsto \Gamma_n(\hat e_i):=e_i
\end{align}
and its linear extension to all of $\mathbb C^n$. Now let 
\begin{align*}
E_n:\mathbb C^{n\times n}\to\mathcal B(\mathcal H),\qquad A\mapsto E_n(A):=\Gamma_n A\Gamma_n^\dagger
\end{align*}
be the embedding of $\mathbb C^{n\times n}$ into $B(\mathcal H)$ relative to the basis $(e_n)_{n\in\mathbb N} $
and let
\begin{align}\label{cut_out_operator}
[\;\cdot\;]_n:\mathcal B(\mathcal H)\to\mathbb C^{n\times n},\qquad A\mapsto [A]_n:=\Gamma_n^\dagger A\Gamma_n
\end{align}
be the operator which ``cuts out'' the upper $n\times n$ block of (the matrix representation of) $A$ 
with respect to $(e_n)_{n\in\mathbb N} $. 

\begin{remark}
Obviously, $W_{E_n(C)}(T)$ coincides with \eqref{eq:jones_1} for all $C\in\mathbb C^{n\times n}$
and $T\in\mathcal B(\mathcal H)$, where $E_n$ is the embedding operator with respect to any orthonormal basis of $\mathcal H$. Thus Definition \ref{defi_1} or, equivalently, Eq. \eqref{eq:WCA-inf-dim} actually generalize Jones' approach \cite{article_jones} who, in our words, considered only finite-rank operators $C\in\mathcal F(\mathcal H)$.
\end{remark}

The following lemma which will be needed later is a trivial consequence of the standard trace 
identity (\ref{eq:4_a}) for operators acting on the \emph{same} Hilbert space.

\begin{lemma}\label{embedding_trace_preserv}
Let $n\in\mathbb N$, $A\in\mathbb C^{n\times n}$, $B\in\mathcal B(\mathcal H)$ and any orthonormal bases $(e_n)_{n\in\mathbb N}$, $(g_n)_{n\in\mathbb N}$ of $\mathcal H$ be given. Then
\begin{align*}
\operatorname{tr}\big((\Gamma^g_n)^\dagger B\Gamma_n^eA\big)=\operatorname{tr}\big(B\Gamma_n^eA(\Gamma_n^g)^\dagger\big)
\end{align*}
where $\Gamma_n^e$ ($\Gamma_n^g$) is the above embedding $\Gamma_n$ with respect to $(e_n)_{n\in\mathbb N}$ ($(g_n)_{n\in\mathbb N}$).
\end{lemma}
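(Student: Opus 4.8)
The plan is to reduce everything to the ordinary cyclicity of the trace $\operatorname{tr}(XY)=\operatorname{tr}(YX)$ for operators on the \emph{same} Hilbert space, which is (\ref{eq:4_a}). The subtlety is that the two sides of the claimed identity, as written, are traces of operators acting on different spaces: on the left $(\Gamma_n^g)^\dagger B\Gamma_n^e A$ maps $\mathbb C^n\to\mathbb C^n$, while on the right $B\Gamma_n^e A(\Gamma_n^g)^\dagger$ maps $\mathcal H\to\mathcal H$. So before invoking (\ref{eq:4_a}) one must check that both traces are well-defined and that the factors can legitimately be regrouped.

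First I would record that $\Gamma_n^e$ and $\Gamma_n^g$ are bounded (indeed isometric) operators $\mathbb C^n\to\mathcal H$, hence $A\in\mathbb C^{n\times n}$ being automatically trace-class/finite-rank, both composite operators $(\Gamma_n^g)^\dagger B\Gamma_n^e A\in\mathcal B^1(\mathbb C^n)$ and $B\Gamma_n^e A(\Gamma_n^g)^\dagger\in\mathcal B^1(\mathcal H)$ are trace-class (the latter is finite-rank, its range lying in $\operatorname{span}(g_1,\dots,g_n)$), so Lemma \ref{lemma_10} guarantees the trace norms are finite and the traces exist. Then I would write $X:=(\Gamma_n^g)^\dagger$ as a map $\mathcal H\to\mathbb C^n$ and $Y:=B\Gamma_n^e A$ as a map $\mathbb C^n\to\mathcal H$ and observe that the statement is precisely $\operatorname{tr}(XY)=\operatorname{tr}(YX)$ with $XY$ acting on $\mathbb C^n$ and $YX$ acting on $\mathcal H$. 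This "rectangular" cyclicity is the genuine content; it follows from (\ref{eq:4_a}) applied on the orthogonal direct sum $\mathbb C^n\oplus\mathcal H$: embed $X$ and $Y$ as the off-diagonal blocks of an operator on $\mathbb C^n\oplus\mathcal H$ and compute the trace of the resulting block operator in two ways, or, more elementarily, expand both traces in the standard basis $(\hat e_i)_{i=1}^n$ of $\mathbb C^n$ and an orthonormal basis of $\mathcal H$ and check by Fubini (the double sum converges absolutely since $X,Y$ are bounded and one factor is finite-rank) that both equal $\sum_{i=1}^n\langle \hat e_i, X Y\hat e_i\rangle$.

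Concretely, expanding the right-hand side over an orthonormal basis $(h_k)_k$ of $\mathcal H$ and inserting the Fourier expansion of each $h_k$ in terms of how $Y$ and $X$ act, one gets $\operatorname{tr}(YX)=\sum_k\langle h_k, YX h_k\rangle=\sum_k\sum_{i=1}^n\langle h_k, Y\hat e_i\rangle\langle \hat e_i, X h_k\rangle$; interchanging the (absolutely convergent) sums and using $\sum_k\langle \hat e_i, Xh_k\rangle\langle h_k, Y\hat e_j\rangle=\langle \hat e_i, XY\hat e_j\rangle$ (Parseval, since $X^\dagger\hat e_i, Y\hat e_j\in\mathcal H$) collapses this to $\sum_{i=1}^n\langle \hat e_i, XY\hat e_i\rangle=\operatorname{tr}(XY)$, which is the left-hand side. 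The only point requiring any care is the interchange of summations, i.e. absolute summability of the double series — this is where finite-rankness of $A$ (equivalently, the finite index set $i\in\{1,\dots,n\}$) does the work, so there is in fact no serious obstacle; the "hard part" is merely bookkeeping the two ambient Hilbert spaces correctly, and the lemma is, as stated, a triviality once that bookkeeping is in place.

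\begin{proof}
Set $X:=(\Gamma_n^g)^\dagger\in\mathcal B(\mathcal H,\mathbb C^n)$ and $Y:=B\Gamma_n^eA\in\mathcal B(\mathbb C^n,\mathcal H)$. Since $\Gamma_n^e,\Gamma_n^g$ are isometries and $A$ has finite rank, $Y$ has finite rank, so by Lemma \ref{lemma_10} both $XY\in\mathcal B^1(\mathbb C^n)$ and $YX\in\mathcal B^1(\mathcal H)$, and the asserted identity reads $\operatorname{tr}(XY)=\operatorname{tr}(YX)$. Fix an orthonormal basis $(h_k)_{k\in\mathbb N}$ of $\mathcal H$ and let $(\hat e_i)_{i=1}^n$ be the standard basis of $\mathbb C^n$. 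Then
\begin{align*}
\operatorname{tr}(YX)=\sum_{k}\langle h_k,YXh_k\rangle=\sum_{k}\sum_{i=1}^n\langle h_k,Y\hat e_i\rangle\langle \hat e_i,Xh_k\rangle\,,
\end{align*}
where we used the Fourier expansion $Xh_k=\sum_{i=1}^n\langle \hat e_i,Xh_k\rangle\hat e_i$ in $\mathbb C^n$. The double sum converges absolutely: by Cauchy--Schwarz and Parseval, $\sum_k|\langle h_k,Y\hat e_i\rangle|\,|\langle \hat e_i,Xh_k\rangle|\le \|Y\hat e_i\|\,\|X^\dagger\hat e_i\|<\infty$ for each of the finitely many $i$. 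Hence we may interchange the summations and apply Parseval once more to obtain, for each $i$, $\sum_k\langle \hat e_i,Xh_k\rangle\langle h_k,Y\hat e_i\rangle=\langle X^\dagger\hat e_i,Y\hat e_i\rangle=\langle \hat e_i,XY\hat e_i\rangle$. Therefore
\begin{align*}
\operatorname{tr}(YX)=\sum_{i=1}^n\langle \hat e_i,XY\hat e_i\rangle=\operatorname{tr}(XY)\,,
\end{align*}
which is the claim.
\end{proof}
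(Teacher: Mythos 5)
Your proof is correct, but it takes a different route from the paper. The paper's proof is a three-line trick: it embeds $B$, the pair $\Gamma_n^e,(\Gamma_n^g)^\dagger$, and $A$ as blocks of three operators on the direct sum $\mathcal H\times\mathbb C^n$ (with the embedding operators placed off-diagonally) and then invokes ordinary cyclicity of the trace on that single enlarged space, so that the two sides of the identity appear as traces of two cyclic rearrangements of the same four-fold block product. You instead isolate the underlying ``rectangular cyclicity'' statement $\operatorname{tr}(XY)=\operatorname{tr}(YX)$ for $X\in\mathcal B(\mathcal H,\mathbb C^n)$, $Y\in\mathcal B(\mathbb C^n,\mathcal H)$ with one factor of finite rank, and prove it by hand via orthonormal-basis expansion, Cauchy--Schwarz/Parseval, and an interchange of an absolutely convergent double sum. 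Both arguments are sound; the paper's is shorter but leaves the reader to work out the correct block product and to note that cyclicity applies because the $A$-block is trace class, whereas yours is self-contained and makes the convergence issues explicit (the finiteness of the index set $i\in\{1,\dots,n\}$ doing the work, exactly as you say). One cosmetic remark: your appeal to Lemma~\ref{lemma_10} for membership in the trace class is not literally applicable since that lemma is stated for operators on a single space; but this is harmless, as $Y$ and hence $XY$ and $YX$ are finite-rank and therefore trivially trace class, which is all you use.
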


\begin{proof}
Consider the operators 
\begin{align*}
\begin{pmatrix}
B & 0 \\ 0 & 0
\end{pmatrix},
\qquad
\begin{pmatrix}
0 & \Gamma_n^e \\ (\Gamma_n^g)^\dagger & 0
\end{pmatrix},
\quad\text{and}\quad
\begin{pmatrix}
0 & 0 \\ 0 & A
\end{pmatrix}
\end{align*}
acting on $\mathcal H \times \mathbb C^n$ and use the standard cyclicity result of the trace.
\end{proof}

\subsection{Convexity and Star-shapedness}

Our strategy is to transfer the well-known properties of the finite-dimensional $[C]_n$-numerical range of 
$[T]_n$ to $W_C(T)$ via the convergence results of Lemma \ref{lemma_5}.\medskip

Let $B\in\mathcal B(\mathcal H)$ and let $(e_n)_{n\in\mathbb N}$ be an orthonormal basis of $\mathcal H$. 
For any $k\in\mathbb N$ we define the $k$-th block approximation of $B$ with respect to $(e_n)_{n\in\mathbb N}$ as 
\begin{align}\label{eq:defi_block}
B_k:=\Pi_kB\Pi_k,
\quad\text{where}\quad \Pi_k := \sum_{j=1}^k\langle e_j,\cdot\rangle e_j
\end{align}
is the orthogonal projection onto $\operatorname{span}\lbrace e_1,\ldots, e_k\rbrace$. Thus one has
\begin{align*}
B_k = \sum_{i,j=1}^k\langle  e_i, Be_j\rangle\langle e_j,\cdot\rangle e_i\,.
\end{align*}

\begin{lemma}\label{lemma_proj_strong_conv}
\begin{itemize}
\item[(a)] Let $(e_n)_{n\in\mathbb N}$ be any orthonormal basis of $\mathcal H$.
The sequence of orthogonal projections $(\Pi_n)_{n\in\mathbb N}$ given by \eqref{eq:defi_block}
converges strongly to the identity operator $\operatorname{id}_{\mathcal H}$ on $\mathcal H$.\vspace{4pt}
\item[(b)] Let $C \in \mathcal B^1(\mathcal H)$ and let $(S_n)_{n\in\mathbb N}$ be a sequence in 
$\mathcal B(\mathcal H)$ which converges strongly to $S \in\mathcal B(\mathcal H)$. Then one has
$S_n C \to SC$, $CS_n^\dagger \to CS^\dagger$, and $S_nCS_n^\dagger \to SCS^\dagger$ for $n \to \infty$
with respect to the trace-norm $\nu_1$. \vspace{4pt} 
\item[(c)] Let $T\in\mathcal K(\mathcal H)$ and let $(S_n)_{n\in\mathbb N}$ be a sequence in 
$\mathcal B(\mathcal H)$ which converges strongly to $S \in\mathcal B(\mathcal H)$. 
Then one has $S_nT \to ST$, $TS_n^\dagger \to TS^\dagger$ and $S_nTS_n^\dagger \to STS^\dagger$ 
for $n \to \infty$ with respect to the operator norm $\Vert \cdot \Vert$. 
\end{itemize} 
\end{lemma}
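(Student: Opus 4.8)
The plan is to dispatch the three parts in the order (a), (c), (b), because (a) is elementary, (c) is the genuine analytic core via a finite-rank approximation argument, and (b) then reduces to (c) plus a trace-norm estimate.

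For part (a), I would use the Fourier expansion: for any $x\in\mathcal H$ one has $x=\sum_{j=1}^\infty\langle e_j,x\rangle e_j$, hence $\Pi_n x=\sum_{j=1}^n\langle e_j,x\rangle e_j$ is the $n$-th partial sum, which converges to $x$ in norm by definition of the series. Since $\|\Pi_n\|\le 1$ for all $n$, strong convergence $\Pi_n\to\operatorname{id}_{\mathcal H}$ follows immediately; no boundedness-principle argument is even needed.

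For part (c), the key point is the standard fact that a strongly convergent sequence $(S_n)$ is uniformly bounded, say $\|S_n\|\le M$ and $\|S\|\le M$ (uniform boundedness principle). First I would show $S_nT\to ST$ in operator norm for $T$ compact. Fix $\varepsilon>0$; by the Schmidt decomposition (Lemma \ref{thm_1}), or simply by compactness, choose a finite-rank operator $F=\sum_{i=1}^N\langle f_i,\cdot\rangle g_i$ with $\|T-F\|<\varepsilon$. Then $\|S_nT-ST\|\le\|(S_n-S)F\|+\|(S_n-S)(T-F)\|\le\|(S_n-S)F\|+2M\varepsilon$, and $\|(S_n-S)F\|\le\sum_{i=1}^N\|f_i\|\,\|(S_n-S)g_i\|\to 0$ by strong convergence applied to the finitely many vectors $g_i$. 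Hence $\limsup_n\|S_nT-ST\|\le 2M\varepsilon$, and letting $\varepsilon\to 0$ gives the claim. The statement $TS_n^\dagger\to TS^\dagger$ does \emph{not} follow the same way directly, since $S_n^\dagger$ need not converge strongly; instead I would take adjoints, noting $\|TS_n^\dagger-TS^\dagger\|=\|(S_nT^\dagger-ST^\dagger)^\dagger\|=\|S_nT^\dagger-ST^\dagger\|$, and apply the first part to the compact operator $T^\dagger$. Finally, for the product, write $S_nTS_n^\dagger-STS^\dagger=(S_nT-ST)S_n^\dagger+ST(S_n^\dagger-S^\dagger)$; the first summand has norm $\le\|S_nT-ST\|\cdot M\to 0$, and the second equals $S(TS_n^\dagger-TS^\dagger)$ which has norm $\le M\|TS_n^\dagger-TS^\dagger\|\to 0$ by the part just proved.

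For part (b), the argument is structurally identical but with $\nu_1$ in place of $\|\cdot\|$, using Lemma \ref{lemma_10} ($\nu_1(ACB)\le\|A\|\nu_1(C)\|B\|$) as the substitute for submultiplicativity of the operator norm. Since $C\in\mathcal B^1(\mathcal H)$, I would approximate $C$ in $\nu_1$ by a finite-rank operator $F=\sum_{i=1}^N s_i\langle f_i,\cdot\rangle g_i$ using the Schmidt decomposition and the definition $\nu_1(C)=\sum_n s_n(C)<\infty$ (so the tails vanish in trace norm). Then $\nu_1(S_nC-SC)\le\nu_1((S_n-S)F)+\nu_1((S_n-S)(C-F))\le\nu_1((S_n-S)F)+2M\,\nu_1(C-F)$, and $\nu_1((S_n-S)F)\le\sum_{i=1}^N s_i\|(S_n-S)g_i\|\,\|f_i\|\to 0$. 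The estimates for $CS_n^\dagger\to CS^\dagger$ and $S_nCS_n^\dagger\to SCS^\dagger$ are handled exactly as in (c), again taking adjoints where needed (here $\nu_1(X^\dagger)=\nu_1(X)$) and splitting the product into two telescoping terms, each controlled by Lemma \ref{lemma_10}.

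The main obstacle is the absence of strong convergence of the adjoints $S_n^\dagger$: one cannot simply ``conjugate'' the strong-convergence hypothesis. The fix throughout is to move $S_n^\dagger$ onto the compact (resp. trace-class) factor via the adjoint, reducing every statement involving $S_n^\dagger$ to an already-proven statement involving $S_n$ acting on $T^\dagger$ (resp. $C^\dagger$), exploiting isometry of the adjoint in both $\|\cdot\|$ and $\nu_1$. Everything else is the routine $\varepsilon/3$-style finite-rank truncation, legitimized by compactness in (c) and by $\nu_1$-summability of the singular numbers in (b).
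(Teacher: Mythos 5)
Your proposal is correct and follows essentially the same route as the paper: finite-rank truncation of $C$ (resp.\ $T$) via the Schmidt decomposition, the uniform boundedness principle to control $\Vert S_n\Vert$, the isometry of the adjoint in $\nu_1$ (resp.\ $\Vert\cdot\Vert$) to handle the terms involving $S_n^\dagger$, and a telescoping split for the two-sided product. The only differences are cosmetic (order of parts, $\limsup$ versus explicit $\varepsilon/3$ bookkeeping).
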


\begin{proof}
(a) This follows from the Fourier expansion and Parseval's identity.\medskip

\noindent (b) The case $C=0$ is obvious. Therefore, we can assume w.l.o.g.~$C\neq 0$.
By the uniform boundedness
principle, the sequence $(\Vert S_n\Vert)_{n\in\mathbb N}$ is bounded and thus there exists $\kappa > 0$
such that $\Vert S\Vert\leq \kappa$ and $\Vert S_n\Vert\leq \kappa$ for all $n\in\mathbb N$. Now let
$\varepsilon>0$ be given. By Lemma \ref{thm_1}, there exist orthonormal systems $(e_n)_{n\in\mathbb N}$
and $(f_n)_{n\in\mathbb N}$ in $\mathcal H$ with $C=\sum_{k=1}^\infty s_k(C)\langle e_k,\cdot\rangle f_k$ and
$\nu_1(C)=\sum_{k=1}^\infty s_k(C) < \infty$. Hence we can choose $K\in\mathbb N$ such that
\begin{align*}
\sum_{k=K+1}^\infty s_k(C)<\frac{\varepsilon}{3\kappa}\,.
\end{align*}
Based on this, we decompose $C = C_1 + C_2$ via 
\begin{align*}
C_1 := \sum_{k=1}^K s_k(C)\langle e_k,\cdot\rangle f_k
\quad\text{and}\quad
C_2 := \sum_{k=K+1}^\infty s_k(C)\langle e_k,\cdot\rangle f_k\,.
\end{align*}
Note that $C_1$ is finite-rank hence trace class, even if $C$ was only compact. Now together with Lemma \ref{lemma_10} we get
\begin{align}\label{eq_decomposition}
\nu_1(SC &- S_nC)=\nu_1(SC_1+SC_2-S_n C_1 -S_n C_2 )\\
&\leq \nu_1(SC_1-S_n C_1)+\Vert S\Vert\nu_1(C_2)+\Vert S_n\Vert\nu_1(C_2)
<\nu_1(SC_1-S_nC_1 )+\frac{2\varepsilon}{3}\,.\nonumber
\end{align}
Now our goal is to choose $N\in\mathbb N$ such that $\nu_1(SC_1-S_nC_1)$ is smaller than $\varepsilon/3$ 
for all $n\geq N$. Note that $\nu_1(\langle x,\cdot\rangle y)=\Vert x\Vert\Vert y\Vert$ for any 
$x,y\in\mathcal H$. Hence it follows
\begin{align*}
\nu_1(SC_1-S_n C_1)
&=\nu_1\Big( \sum_{k=1}^K s_k(C)\langle e_k,\cdot\rangle Sf_k-\sum_{k=1}^K s_k(C)\langle e_k,\cdot\rangle S_nf_k \Big)\\
&\leq  \sum_{k=1}^K s_k(C)\nu_1\big(\langle e_k,\cdot\rangle (Sf_k-S_nf_k)\big)=\sum_{k=1}^K s_k(C) \Vert Sf_k-S_nf_k \Vert\,.
\end{align*}
Moreover, the strong convergence of $(S_n)_{n\in\mathbb N}$ yields $N \in \mathbb N$ such that
\begin{align*}
\Vert Sf_k - S_nf_k \Vert<\frac{\varepsilon}{3\nu_1(C_1)}
\end{align*}
for $k = 1, \dots, K$ and all $n\geq N$. Finally, for $n \geq N$ we get
\begin{align*}
\nu_1(SC_1-S_n C_1) < \frac{\varepsilon}{3\nu_1(C_1)}\sum_{k=1}^K s_k(C)  = \frac{\varepsilon}{3}
\end{align*}
which implies $\nu_1(SC-S_nC) \to 0$ for $ n\to\infty$. The case $(\nu_1(CS^\dagger-CS_n^\dagger))_{n\in\mathbb N}$ 
follows immediately from the identity $\nu_1(A) = \nu_1(A^\dagger)$ for all $A \in \mathcal B^1(\mathcal H)$.
Combining both results and Lemma \ref{lemma_10} yields
\begin{align*}
\nu_1(SCS^\dagger &- S_nCS_n^\dagger) \leq \nu_1\big(S(CS^\dagger- CS_n^\dagger)\big) + \nu_1\big((SC-S_nC)S_n^\dagger\big)\\
& \leq \Vert S\Vert\nu_1(CS^\dagger-CS_n^\dagger)+\nu_1(SC-S_nC) \kappa \to 0 
\quad\text{for}\quad n\to\infty\,.
\end{align*}

\noindent (c) Finally, let $T\in\mathcal K(\mathcal H)$. Again Lemma \ref{thm_1} guarantees a Schmidt decomposition 
$T=\sum_{k=1}^\infty s_k(T)\langle g_k,\cdot\rangle h_k$. A straightforward application of Bessel's inequality
combined with the monotonicity of the singular numbers $s_k(T)$ implies
\begin{align*}
\Big\Vert\sum_{k=m}^\infty s_k(T)\langle g_k,\cdot\rangle h_k\Big\Vert^2 \leq s_m^2(T) \to 0\quad\text{for}\quad m \to \infty\,.
\end{align*}
Based on this observation, one can proceed as in part (b). More precisely, a decomposition as in  
\eqref{eq_decomposition} and the idenity $\Vert \langle x,\cdot\rangle y\Vert = \Vert x\Vert\Vert y\Vert$ 
for all $x,y\in\mathcal H$ will yield the desired result.
\end{proof}

\begin{lemma}\label{strong_tr_conv}
Let $(S_n)_{n\in\mathbb N}$ be a sequence in $\mathcal B(\mathcal H)$ which converges strongly to 
$S\in\mathcal B(\mathcal H)$. Then for all $C\in\mathcal B^1(\mathcal H)$ and $T\in\mathcal B(\mathcal H)$
one has
\begin{align*}
\lim_{n\to\infty}\operatorname{tr}(CS_n^\dagger TS_n)=\operatorname{tr}(CS^\dagger TS)\,.
\end{align*}
Furthermore, 
\begin{itemize}
\item the sequence of linear functionals $\big(\operatorname{tr}(CS_n^\dagger(\cdot)S_n)\big)_{n\in\mathbb N}$ 
converge uniformly to $\operatorname{tr}(CS^\dagger (\cdot)S)$ on bounded subsets of $\mathcal B(\mathcal H)$.
\item the sequence of linear functionals $\big(\operatorname{tr}((\cdot)S_n^\dagger TS_n)\big)_{n\in\mathbb N}$
converge uniformly to $\operatorname{tr}((\cdot)S^\dagger TS)$ on compact subsets of $\mathcal B^1(\mathcal H)$.
\end{itemize}
If $T$ additionally is compact, then $\big(\operatorname{tr}((\cdot)S_n^\dagger TS_n)\big)_{n\in\mathbb N}$
converges uniformly to $\operatorname{tr}((\cdot)S^\dagger TS)$ on (trace norm-) bounded subsets of $\mathcal B^1(\mathcal H)$.
\end{lemma}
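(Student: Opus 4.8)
The overall plan is to rewrite each expression $\operatorname{tr}(CS_n^\dagger TS_n)$ by means of the cyclicity \eqref{eq:4_a} of the trace so that the whole $n$-dependence sits inside a term governed by Lemma \ref{lemma_proj_strong_conv}, and then to control the remaining trace by the crude bound \eqref{eq:4}. Indeed, $\operatorname{tr}(CS_n^\dagger TS_n)=\operatorname{tr}(S_nCS_n^\dagger T)$ and $\operatorname{tr}(CS^\dagger TS)=\operatorname{tr}(SCS^\dagger T)$, so that
\begin{align*}
\big|\operatorname{tr}(CS_n^\dagger TS_n)-\operatorname{tr}(CS^\dagger TS)\big|=\big|\operatorname{tr}\big((S_nCS_n^\dagger-SCS^\dagger)T\big)\big|\leq\nu_1\big(S_nCS_n^\dagger-SCS^\dagger\big)\,\Vert T\Vert\,,
\end{align*}
and the right-hand side vanishes as $n\to\infty$ by Lemma \ref{lemma_proj_strong_conv}(b); this gives the pointwise statement. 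Taking the supremum over $\Vert T\Vert\leq R$ in the same estimate immediately yields $\sup_{\Vert T\Vert\leq R}|\operatorname{tr}(CS_n^\dagger TS_n)-\operatorname{tr}(CS^\dagger TS)|\leq R\,\nu_1(S_nCS_n^\dagger-SCS^\dagger)\to 0$, that is, uniform convergence of $\operatorname{tr}(CS_n^\dagger(\cdot)S_n)$ on norm-bounded subsets of $\mathcal B(\mathcal H)$.

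For uniform convergence of $\operatorname{tr}((\cdot)S_n^\dagger TS_n)$ on a compact set $\mathcal C\subseteq\mathcal B^1(\mathcal H)$ (with $T$ fixed) I would run a standard $\varepsilon/3$ argument. The uniform boundedness principle furnishes $\kappa:=\sup_n\Vert S_n\Vert<\infty$, with $\Vert S\Vert\leq\kappa$ as well, so \eqref{eq:4} gives $|\operatorname{tr}((C-C')S_n^\dagger TS_n)|\leq\nu_1(C-C')\,\kappa^2\Vert T\Vert$ and likewise with $S$ in place of $S_n$. Covering $\mathcal C$ by finitely many $\nu_1$-balls of radius $\delta$ centred at $C_1,\dots,C_m\in\mathcal C$, with $\delta$ chosen so small that $2\delta\kappa^2\Vert T\Vert<\varepsilon/2$, and invoking the already-proved pointwise statement for each of the finitely many $C_j$, a triangle-inequality split at the nearest $C_j$ yields $|\operatorname{tr}(CS_n^\dagger TS_n)-\operatorname{tr}(CS^\dagger TS)|<\varepsilon$ for all $C\in\mathcal C$ once $n$ is large.

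Finally, for compact $T$: by \eqref{eq:4} one has $\sup_{\nu_1(C)\leq R}|\operatorname{tr}(C(S_n^\dagger TS_n-S^\dagger TS))|\leq R\,\Vert S_n^\dagger TS_n-S^\dagger TS\Vert$, so it suffices to establish operator-norm convergence $S_n^\dagger TS_n\to S^\dagger TS$. Splitting
\begin{align*}
S_n^\dagger TS_n-S^\dagger TS=(S_n^\dagger-S^\dagger)TS_n+S^\dagger T(S_n-S)\,,
\end{align*}
the first term is at most $\kappa\,\Vert(S_n^\dagger-S^\dagger)T\Vert$ and, passing to adjoints, the second is at most $\Vert S^\dagger\Vert\,\Vert(S_n^\dagger-S^\dagger)T^\dagger\Vert$; both tend to $0$ by Lemma \ref{lemma_proj_strong_conv}(c), using that $T$ and hence $T^\dagger$ are compact. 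I expect this last step to be the main obstacle, because it uses not merely $S_n\to S$ strongly but also $S_n^\dagger\to S^\dagger$ strongly: without the latter the assertion is false, as one sees on $\mathcal H=\ell^2$ with $S_n$ the $n$-th power of the left shift, $S=0$, and $T$ the rank-one projection onto the first basis vector, where $S_n^\dagger TS_n$ equals the rank-one projection onto the $(n+1)$-st basis vector and hence has norm $1$ for every $n$. The extra requirement $S_n^\dagger\to S^\dagger$ strongly is, however, automatically satisfied in every application of the lemma in this paper — the relevant $S_n$ being block projections $\Pi_n$ or unitaries converging strongly to a unitary — so Lemma \ref{lemma_proj_strong_conv}(c) applies and the argument goes through.
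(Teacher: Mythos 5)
Your argument for the displayed limit is exactly the paper's: cyclicity of the trace, the bound $|\operatorname{tr}(AB)|\le\nu_1(A)\Vert B\Vert$ from \eqref{eq:4}, and Lemma \ref{lemma_proj_strong_conv}(b) applied to $\nu_1(S_nCS_n^\dagger-SCS^\dagger)$. The paper then disposes of everything else with ``the remaining assertions of the lemma are evident''; your treatment of the first two bullets (the estimate is already uniform over $\Vert T\Vert\le R$; a finite $\nu_1$-net of the compact set combined with the uniform bound $\kappa^2\Vert T\Vert$ on the norms of the functionals) is correct and simply fills in what the paper omits.

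Your analysis of the last assertion is also correct, and what you have found there is a defect in the lemma as stated rather than a gap in your proof. By the duality between $(\mathcal B^1(\mathcal H),\nu_1)$ and $(\mathcal B(\mathcal H),\Vert\cdot\Vert)$, uniform convergence on the trace-norm unit ball is equivalent to $\Vert S_n^\dagger TS_n-S^\dagger TS\Vert\to 0$, and your shift example refutes this under the stated hypotheses: with $S_n=L^n$ (the $n$-th power of the left shift on $\ell^2$), $S=0$ and $T=\langle e_1,\cdot\rangle e_1$ one gets $S_n^\dagger TS_n=\langle e_{n+1},\cdot\rangle e_{n+1}$, so testing against $C_n=\langle e_{n+1},\cdot\rangle e_{n+1}$ (which has $\nu_1(C_n)=1$) yields $\operatorname{tr}(C_nS_n^\dagger TS_n)=1$ for every $n$ while $\operatorname{tr}(C_nS^\dagger TS)=0$; the pointwise statement survives because $\langle e_{n+1},Ce_{n+1}\rangle\to0$ for each fixed trace-class $C$. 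The source of the asymmetry is exactly the one you name: Lemma \ref{lemma_proj_strong_conv}(b),(c) control expressions of the form $S_nCS_n^\dagger$, whereas the last bullet involves $S_n^\dagger TS_n$, so one needs $S_n^\dagger\to S^\dagger$ strongly as an additional hypothesis. As you observe, this extra hypothesis is satisfied wherever the paper could invoke the assertion (the relevant $S_n$ are the self-adjoint projections $\Pi_n$, and for genuinely unitary $U_n\to U$ strong convergence of the adjoints follows from $\Vert U_n^\dagger x-U^\dagger x\Vert=\Vert x-U_nU^\dagger x\Vert\to0$), so none of the downstream results is affected. With that hypothesis added, your proof of the final assertion via Lemma \ref{lemma_proj_strong_conv}(c) is complete.
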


\begin{proof}
This is a simple consequence of (\ref{eq:4}) and Lemma \ref{lemma_proj_strong_conv} (b) as
\begin{align*}
|\operatorname{tr}(CS^\dagger TS) &-\operatorname{tr}(CS_n^\dagger TS_n)| = 
\big|\operatorname{tr}\big((SCS^\dagger-S_nCS_n^\dagger)T\big)\big|\\
&\leq\Vert T\Vert \nu_1(SCS^\dagger-S_nCS_n^\dagger) \to 0
\quad\text{for}\quad n\to\infty\,.
\end{align*}
The remaining assertions of the lemma are evident.
\end{proof}

\begin{remark}
Note that for arbitrary bounded operators $T$, $\operatorname{tr}((\cdot)S_n^\dagger TS_n)$ does not necessarily converge uniformly to
$\operatorname{tr}((\cdot)S^\dagger TS)$ on (trace norm-) bounded subsets of $\mathcal B^1(\mathcal H)$. 
A counter-example is given in Appendix \ref{appendix_examples} (Ex. \ref{ex_1}).
\end{remark}

\begin{lemma}\label{U-approximation}
Let $U \in\mathcal B(\mathcal H)$ be unitary and consider orthonormal bases $(e_n)_{n \in \mathbb N}$, $(g_n)_{n \in \mathbb N}$ of $\mathcal H$. Then there exists a
sequence $(\hat{U}_n)_{n \in \mathbb N}$ in $\mathcal B(\mathcal H)$ which satisfies the following properties:
\begin{itemize}
\item[(a)]
$(\hat{U}_n)_{n \in \mathbb N}$ converges strongly to $U$.\vspace{4pt}
\item[(b)]
$\Pi^g_{2n}\hat{U}_n\Pi^e_{2n}=\hat U_n$ for all $n \in \mathbb N$.\vspace{4pt}
\item[(c)]
$(\Gamma_{2n}^g)^\dagger \hat{U}_n \Gamma_{2n}^e\in\mathbb C^{2n\times 2n}$ is unitary for all $n \in \mathbb N$.
\end{itemize}
Here, $\Gamma_k^e$, $\Pi_{k}^e$ and $\Gamma_k^g$, $\Pi_k^g$ are the maps given by \eqref{Gamma} and \eqref{eq:defi_block} with respect to $(e_n)_{n\in\mathbb N}$ and $(g_n)_{n\in\mathbb N}$, respectively.
\end{lemma}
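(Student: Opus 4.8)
The plan is to build $\hat U_n$ by first straightening out a finite but growing block of $U$ into an isometry, and then completing that isometry arbitrarily to a unitary between the $2n$-dimensional coordinate subspaces $\operatorname{span}\{e_1,\dots,e_{2n}\}$ and $\operatorname{span}\{g_1,\dots,g_{2n}\}$; the factor $2$ in ``$2n$'' is precisely what guarantees the room needed for the completion. The key input is Lemma~\ref{lemma_proj_strong_conv}(c): for each fixed $k$ the operator $U\Pi^e_k$ is finite-rank, hence $a(k,m):=\|(\operatorname{id}_{\mathcal H}-\Pi^g_m)U\Pi^e_k\|$ -- which is non-increasing in $m$ -- tends to $0$ as $m\to\infty$.

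First I would fix the growing dimension. Put $m(k):=\min\{m:a(k,m)\le2^{-k}\}$, which is finite, and (after replacing it by $\max_{j\le k}m(j)$) assume $k\mapsto m(k)$ non-decreasing. Set $k_n:=\max\{k\le n:m(k)\le2n\}$, with $k_n:=0$ if this set is empty; one checks directly that $k_n\le n$, that $k_n\to\infty$, and that $\varepsilon_n:=a(k_n,2n)\le2^{-k_n}\to0$. For the finitely many $n$ with $k_n=0$ take $\hat U_n:=\Gamma^g_{2n}(\Gamma^e_{2n})^\dagger$, which is of the required form and irrelevant for the strong limit. For $k_n\ge1$, write $\mathcal M_n:=\operatorname{span}\{e_1,\dots,e_{k_n}\}$ and $\mathcal N_n:=\operatorname{ran}\Pi^g_{2n}$, and regard $B_n:=\Pi^g_{2n}U|_{\mathcal M_n}$ as a map $\mathcal M_n\to\mathcal N_n$. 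Since $\|B_nx-Ux\|\le\varepsilon_n\|x\|$ on $\mathcal M_n$ while $U$ is isometric, one gets $\|B_n^\dagger B_n-\operatorname{id}_{\mathcal M_n}\|=O(\varepsilon_n)$, so $B_n$ is injective for $n$ large; taking the polar decomposition $B_n=W_n|B_n|$, the isometry $W_n$ maps $\mathcal M_n$ onto the $k_n$-dimensional space $V_n:=B_n\mathcal M_n\subseteq\mathcal N_n$, and $\||B_n|-\operatorname{id}_{\mathcal M_n}\|=O(\varepsilon_n)$ gives $\|W_n-U|_{\mathcal M_n}\|=O(\varepsilon_n)\to0$. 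As $k_n\le n<2n$, the complements $\operatorname{span}\{e_{k_n+1},\dots,e_{2n}\}$ and $V_n^\perp\cap\mathcal N_n$ both have dimension $2n-k_n$; adding any isometry between them to $W_n$ yields a unitary $\tilde W_n:\operatorname{span}\{e_1,\dots,e_{2n}\}\to\mathcal N_n$ extending $W_n$, and I set $\hat U_n:=\tilde W_n\Pi^e_{2n}$.

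Properties (b) and (c) then hold by construction: $\operatorname{ran}\hat U_n\subseteq\mathcal N_n=\operatorname{ran}\Pi^g_{2n}$ together with $\hat U_n\Pi^e_{2n}=\hat U_n$ gives (b), and $(\Gamma^g_{2n})^\dagger\hat U_n\Gamma^e_{2n}=(\Gamma^g_{2n})^\dagger\tilde W_n\Gamma^e_{2n}$ is unitary because $\tilde W_n$ is a unitary between the two $2n$-dimensional coordinate blocks, which is (c). For (a), fix $x\in\mathcal H$ and $\varepsilon>0$, choose $K$ with $\|x-\Pi^e_Kx\|<\varepsilon$, and take $n$ large enough that $k_n\ge K$; then $\Pi^e_Kx\in\mathcal M_n$, hence $\hat U_n\Pi^e_Kx=W_n\Pi^e_Kx$, and bounding $\hat U_n$ and $U$ by $1$ on $x-\Pi^e_Kx$ gives $\|\hat U_nx-Ux\|\le\|W_n-U|_{\mathcal M_n}\|\,\|x\|+2\varepsilon$, which is $<3\varepsilon$ for $n$ large since $\varepsilon_n\to0$.

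The main obstacle is conceptual rather than computational: one cannot simply complete the $2n\times2n$ truncation $\Pi^g_{2n}U\Pi^e_{2n}$ of $U$ to a unitary, since that truncation is in general a strict contraction that need not even be injective; and one cannot straighten out $\Pi^g_{2n}Ue_1,\dots,Ue_n$ simultaneously, since their images under $\Pi^g_{2n}$ may be linearly dependent -- indeed $\|(\operatorname{id}_{\mathcal H}-\Pi^g_{2n})U\Pi^e_n\|$ need not tend to $0$. Pinning down the slowly growing dimension $k_n$ on which $\Pi^g_{2n}U$ is genuinely close to an isometry (which is exactly the role of Lemma~\ref{lemma_proj_strong_conv}(c)) is the real content; after that, the polar decomposition and an arbitrary unitary completion inside the $2n$-blocks do the rest.
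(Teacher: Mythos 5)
Your proof is correct, but it takes a genuinely different route from the paper's. The paper completes the exact $n\times n$ truncation $(\Gamma_n^g)^\dagger U\Gamma_n^e$ -- which is merely a contraction -- to a $2n\times 2n$ unitary via the explicit dilation $V_n=\begin{pmatrix}(\Gamma_n^g)^\dagger U\Gamma_n^e & Q_n\\ R_n & S_n\end{pmatrix}$ with $Q_n=\sqrt{I-UU^\dagger}$ on the truncation; strong convergence then requires a somewhat delicate estimate showing that the correction blocks $Q_n,R_n,S_n$ annihilate any fixed vector in the limit, which is extracted from the unitarity identity $\Vert x\Vert^2\geq\Vert\hat U_nx\Vert^2$ together with $\Vert\Pi_n^gUx\Vert\to\Vert x\Vert$. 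You instead isolate a slowly growing subspace $\mathcal M_n=\operatorname{span}\{e_1,\dots,e_{k_n}\}$ on which $\Pi^g_{2n}U$ is already an $O(\varepsilon_n)$-perturbation of an isometry (a diagonal extraction justified by Lemma~\ref{lemma_proj_strong_conv}(c)), correct it to an exact isometry by polar decomposition, and complete arbitrarily on the $(2n-k_n)$-dimensional complements. Both constructions exploit the same doubling of dimension to make room for the completion, and your checks of (b), (c), the dimension count, the bound $\Vert W_n-U|_{\mathcal M_n}\Vert=O(\varepsilon_n)$, and the resulting strong convergence all go through. What your version buys is that part (a) becomes essentially trivial, since $\hat U_n$ agrees with a uniformly small perturbation of $U$ on an exhausting family of subspaces; the price is a non-constructive choice of $k_n$ and of the completing isometry, whereas the paper's dilation is given by an explicit formula.
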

\noindent As the proof of Lemma \ref{U-approximation} is rather technical we here refer to Appendix \ref{App_Lemma_U}.

\begin{lemma}\label{lemma_3}
Let $C\in\mathcal B^1(\mathcal H)$ and $T\in\mathcal B(\mathcal H)$ and let $(e_n)_{n\in\mathbb N}$, $(g_n)_{n\in\mathbb N}$ be
arbitrary orthonormal bases of $\mathcal H$. Furthermore, $[\,\cdot\,]_k^e$ and $[\,\cdot\,]_k^g$ are the maps given by \eqref{cut_out_operator} with respect to $(e_n)_{n\in\mathbb N}$ and $(g_n)_{n\in\mathbb N}$, respectively. Then for all $\varepsilon>0$ and $w\in \overline{W_C(T)}$, there exists $N\in\mathbb N$ such that the distance $d(w,W_{[C]^e_{n}}([T]^g_{n}))<\varepsilon$ for all $n\geq N$.
\end{lemma}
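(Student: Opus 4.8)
The plan is to realize every element $w'=\operatorname{tr}(CU^\dagger TU)$ of $W_C(T)$ as a limit of elements of the finite-dimensional $[C]_m^e$-numerical ranges of $[T]_m^g$, and then to pass from $\overline{W_C(T)}$ to $W_C(T)$ by a density argument. So, given $\varepsilon>0$ and $w\in\overline{W_C(T)}$, I would first pick $w'=\operatorname{tr}(CU^\dagger TU)\in W_C(T)$ with $U\in\mathcal B(\mathcal H)$ unitary and $|w-w'|<\varepsilon/2$; it then suffices to produce, for all large $m$, a point of $W_{[C]_m^e}([T]_m^g)$ within distance $\varepsilon/2$ of $w'$, since then $d\big(w,W_{[C]_m^e}([T]_m^g)\big)\le|w-w'|+\varepsilon/2<\varepsilon$.

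The bridge between the two settings is the identity
\[
\operatorname{tr}_{\mathbb C^m}\!\big([C]_m^e\,V^\dagger[T]_m^g\,V\big)=\operatorname{tr}_{\mathcal H}\!\big(C\,\tilde U^\dagger T\tilde U\big),\qquad \tilde U:=\Gamma_m^gV(\Gamma_m^e)^\dagger,
\]
valid for every unitary $V\in\mathbb C^{m\times m}$. To see this one substitutes $[C]_m^e=(\Gamma_m^e)^\dagger C\Gamma_m^e$ and $[T]_m^g=(\Gamma_m^g)^\dagger T\Gamma_m^g$ into $\operatorname{tr}_{\mathcal H}(C\tilde U^\dagger T\tilde U)$ and uses the cyclicity of the trace between $\mathcal H$ and $\mathbb C^m$ (the block-operator argument in the proof of Lemma \ref{embedding_trace_preserv}), which is legitimate because $C\in\mathcal B^1(\mathcal H)$ makes all the relevant products trace class. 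Here $\tilde U$ is the partial isometry mapping $\operatorname{span}\{e_1,\dots,e_m\}$ onto $\operatorname{span}\{g_1,\dots,g_m\}$ according to $V$ and vanishing on the orthogonal complement; so the right-hand side has exactly the shape of an element of $W_C(T)$, but with the unitary $U$ replaced by $\tilde U$.

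It remains to choose the $V_m$ so that $\tilde U_m:=\Gamma_m^gV_m(\Gamma_m^e)^\dagger$ converges strongly to $U$. For even $m=2n$, Lemma \ref{U-approximation} supplies $\hat U_n\to U$ strongly with $\Pi_{2n}^g\hat U_n\Pi_{2n}^e=\hat U_n$ and $V_{2n}:=(\Gamma_{2n}^g)^\dagger\hat U_n\Gamma_{2n}^e$ unitary, whence $\tilde U_{2n}=\Gamma_{2n}^gV_{2n}(\Gamma_{2n}^e)^\dagger=\hat U_n$. For odd $m=2n+1$ I would pad, $V_{2n+1}:=V_{2n}\oplus 1$, so that $\tilde U_{2n+1}=\hat U_n+\langle e_{2n+1},\cdot\rangle g_{2n+1}$; since $\langle e_{2n+1},x\rangle\to0$ for every $x\in\mathcal H$ by Parseval's identity, this rank-one correction tends to $0$ strongly, and hence $\tilde U_m\to U$ strongly along the whole sequence. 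Lemma \ref{strong_tr_conv} applied with $S_m=\tilde U_m$ now yields
\[
\operatorname{tr}_{\mathbb C^m}\!\big([C]_m^eV_m^\dagger[T]_m^gV_m\big)=\operatorname{tr}_{\mathcal H}\!\big(C\tilde U_m^\dagger T\tilde U_m\big)\;\longrightarrow\;\operatorname{tr}_{\mathcal H}\!\big(CU^\dagger TU\big)=w'\qquad(m\to\infty),
\]
and since the left-hand side lies in $W_{[C]_m^e}([T]_m^g)$, for all $m$ beyond some $N$ it is within $\varepsilon/2$ of $w'$, hence within $\varepsilon$ of $w$, giving $d\big(w,W_{[C]_m^e}([T]_m^g)\big)<\varepsilon$ for all $m\ge N$.

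The step I expect to be the main obstacle is the algebraic identity of the second paragraph together with the careful bookkeeping of the two bases — the $\Gamma^e$'s living on the ``$C$-side'' and the $\Gamma^g$'s on the ``$T$-side'' — and the minor index mismatch between the ``$2n$'' produced by Lemma \ref{U-approximation} and the ``$n$'' demanded in the statement, which the padding step repairs. Everything else is a routine combination of the cyclicity of the trace, the estimate $|\operatorname{tr}(C\,\cdot\,)|\le\nu_1(C)\Vert\cdot\Vert$, and strong-operator convergence.
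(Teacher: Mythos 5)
Your proof is correct and follows essentially the same route as the paper: approximate $w$ by some $\operatorname{tr}(CU^\dagger TU)$, replace $U$ by the truncated unitary dilations supplied by Lemma \ref{U-approximation}, and combine the trace identity of Lemma \ref{embedding_trace_preserv} with the continuity statement of Lemma \ref{strong_tr_conv} to land in the finite-dimensional numerical ranges. Your explicit padding $V_{2n+1}:=V_{2n}\oplus 1$ for odd dimensions is in fact slightly more careful than the paper's argument, whose construction only directly produces points of $W_{[C]^e_{2n}}([T]^g_{2n})$.
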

\begin{proof}
Let $\varepsilon>0$ and let $w \in \overline{W_C(T)}$ be given. Then there exists unitary $U\in\mathcal B(\mathcal H)$ such that
$|w-\operatorname{tr}(CU^\dagger TU)|<\varepsilon/2$. By Lemma \ref{U-approximation}, we can find a sequence $(\hat{U}_n)_{n \in \mathbb N}$ which converges strongly to $U$. Lemma \ref{strong_tr_conv}
then yields  $N\in\mathbb N$ such that
\begin{align*}
|\operatorname{tr}(CU^\dagger TU)-\operatorname{tr}(C\hat U_n^\dagger T\hat U_n)|<\frac{\varepsilon}{2}
\end{align*}
for all $n\geq N$. Using Lemma \ref{embedding_trace_preserv} and \ref{U-approximation}, one gets
\begin{align*}
\operatorname{tr}(C\hat U_n^\dagger T\hat U_n)&= \operatorname{tr}\big( C(\Pi^g_{2n}\hat U_n\Pi^e_{2n})^\dagger T(\Pi^g_{2n}\hat U_n\Pi^e_{2n})\big)\\
&=\operatorname{tr}\big([C]^e_{2n}((\Gamma_{2n}^g)^\dagger \hat{U}_n \Gamma_{2n}^e)^\dagger [T]^g_{2n}(\Gamma_{2n}^g)^\dagger \hat{U}_n \Gamma_{2n}^e\big)\in W_{[C]^e_{n}}([T]^g_{n})\,.
\end{align*}
Thus $|w-\operatorname{tr}(C\hat U_n^\dagger T\hat U_n)|<\varepsilon$ for all $n\geq N$, which concludes the proof as the $C$-numerical 
range of any pair of matrices is compact \cite[(2.5)]{article_li_radii}. 
\end{proof}

\noindent Note that in the above proof, $N$ depends usually on $\varepsilon$ but also on the chosen point
$w \in \overline{W_C(T)}$.

\begin{theorem}\label{lemma_2}
Let $C\in\mathcal B^1(\mathcal H)$ and $T\in\mathcal B(\mathcal H)$ and let $(e_n)_{n\in\mathbb N}$, $(g_n)_{n\in\mathbb N}$ be
arbitrary orthonormal bases of $\mathcal H$. Furthermore, $[\,\cdot\,]_k^e$, $\Pi_{k}^e$ and $[\,\cdot\,]_k^g$, $\Pi_k^g$ for all $k\in\mathbb N$ are the maps \eqref{cut_out_operator} and \eqref{eq:defi_block} with respect to $(e_n)_{n\in\mathbb N}$ and $(g_n)_{n\in\mathbb N}$, respectively. Then
\begin{align*}
\lim_{n\to\infty}W_{[C]^e_{2n}}([T]^g_{2n})=\overline{W_C(T)}=\lim_{n\to\infty}\overline{W_{\Pi_n^e C\Pi_n^e}(T)}\,,
\end{align*}
where $W_{[C]^e_{2n}}([T]^g_{2n})$ denotes the ordinary $[C]^e_{2n}$-numerical range of $[T]^g_{2n}$ as defined in (\ref{c_num_range_findim}). If $T$ is additionally compact, then
\begin{align}\label{eq:lemma_2_2}
\lim_{n\to\infty}\overline{W_{\Pi_n^e C\Pi_n^e}(\Pi_n^g T\Pi_n^g)}=\overline{W_C(T)}\,.
\end{align}
\end{theorem}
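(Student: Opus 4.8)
The plan is to prove the three asserted identities in turn, in each case reducing the claim to a trace-norm estimate that holds uniformly over the relevant unitaries and then invoking the set-convergence results of Section~\ref{subsec:set-convergence}. A standing observation used throughout is that every set occurring below is non-empty, compact, and contained in the fixed disc $\{z\in\mathbb C:|z|\le\nu_1(C)\Vert T\Vert\}$, because passing to a finite block raises neither the trace norm (Lemma~\ref{lemma_10}) nor the operator norm; in particular all sequences of sets below are bounded, so Lemmas~\ref{lemma_11} and~\ref{lem:Hausdorff} are applicable, and for matrices $W_{[C]^e_{2n}}([T]^g_{2n})$ is moreover compact.

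For the first identity I would argue with Kuratowski limits. The inclusion $\overline{W_C(T)}\subseteq\liminf_{n\to\infty}W_{[C]^e_{2n}}([T]^g_{2n})$ is immediate from Lemma~\ref{lemma_3} along the even indices. For the reverse inclusion I would prove the stronger statement that $W_{[C]^e_{2n}}([T]^g_{2n})$ lies within $\varepsilon_n\to 0$ of $W_C(T)$. Given a unitary $V\in\mathbb C^{2n\times 2n}$, set $W_n:=\Gamma^g_{2n}V(\Gamma^e_{2n})^\dagger$, a partial isometry with initial space $\operatorname{span}\{e_1,\dots,e_{2n}\}$ and final space $\operatorname{span}\{g_1,\dots,g_{2n}\}$; by the cyclicity identity of Lemma~\ref{embedding_trace_preserv}, exactly as in the proof of Lemma~\ref{lemma_3}, one has $\operatorname{tr}\big([C]^e_{2n}V^\dagger[T]^g_{2n}V\big)=\operatorname{tr}(CW_n^\dagger TW_n)$. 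Since the orthogonal complements of those two $2n$-dimensional subspaces are themselves i.s.c.\ Hilbert spaces, $W_n$ extends to a unitary $\tilde U_n\in\mathcal B(\mathcal H)$ with $\Pi^g_{2n}\tilde U_n\Pi^e_{2n}=\tilde U_n\Pi^e_{2n}=W_n$, whence $W_n^\dagger TW_n=\Pi^e_{2n}B\Pi^e_{2n}$ is the $2n$-th block approximation \eqref{eq:defi_block} of $B:=\tilde U_n^\dagger T\tilde U_n$, whose full trace $\operatorname{tr}(CB)$ lies in $W_C(T)$. With $R_n:=\operatorname{id}_{\mathcal H}-\Pi^e_{2n}$, the decomposition $B-\Pi^e_{2n}B\Pi^e_{2n}=R_nB+\Pi^e_{2n}BR_n$ together with \eqref{eq:4_a}, \eqref{eq:4}, $\Vert B\Vert=\Vert T\Vert$ and Lemma~\ref{lemma_10} bounds $\big|\operatorname{tr}\big([C]^e_{2n}V^\dagger[T]^g_{2n}V\big)-\operatorname{tr}(CB)\big|$ by $\varepsilon_n:=\big(\nu_1(R_nC)+\nu_1(CR_n)\big)\Vert T\Vert$, which is independent of $V$; and $\varepsilon_n\to 0$ because $(R_n)_{n}$ is a strong null sequence, so $R_nC,\,CR_n\to 0$ in trace norm by Lemma~\ref{lemma_proj_strong_conv}(b). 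Hence $\limsup_{n\to\infty}W_{[C]^e_{2n}}([T]^g_{2n})\subseteq\overline{W_C(T)}$, and combined with the $\liminf$-bound this yields $\liminf=\limsup=\overline{W_C(T)}$, i.e.\ the first identity.

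For the middle and last identities I would compare the two $C$-numerical ranges directly and uniformly in $U$. For the middle one, \eqref{eq:4} gives $\big|\operatorname{tr}\big(\Pi_n^eC\Pi_n^e\,U^\dagger TU\big)-\operatorname{tr}(CU^\dagger TU)\big|\le\nu_1(\Pi_n^eC\Pi_n^e-C)\Vert T\Vert$ for every unitary $U$, and $\Pi_n^eC\Pi_n^e\to C$ in trace norm by Lemma~\ref{lemma_proj_strong_conv}(b) (with $S_n=\Pi_n^e\to\operatorname{id}_{\mathcal H}$). Thus $W_{\Pi_n^eC\Pi_n^e}(T)$ and $W_C(T)$ are mutually $\delta_n$-close with $\delta_n\to 0$ in the sense of Lemma~\ref{lemma_11}; this property passes to the (compact) closures since a bounded sequence of approximating points has a convergent subsequence, so $\Delta\big(\overline{W_C(T)},\overline{W_{\Pi_n^eC\Pi_n^e}(T)}\big)\le\delta_n\to 0$ and Lemma~\ref{lem:Hausdorff}(a) yields the middle identity. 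For the last identity, with $T$ compact, the same scheme applies after bounding $\big|\operatorname{tr}\big(\Pi_n^eC\Pi_n^e\,U^\dagger(T-\Pi_n^gT\Pi_n^g)U\big)\big|\le\nu_1(C)\Vert T-\Pi_n^gT\Pi_n^g\Vert$ and using that $\Pi_n^gT\Pi_n^g\to T$ in operator norm by Lemma~\ref{lemma_proj_strong_conv}(c) (for compact $T$, $S_n=\Pi_n^g\to\operatorname{id}_{\mathcal H}$); this shows $\Delta\big(\overline{W_{\Pi_n^eC\Pi_n^e}(T)},\overline{W_{\Pi_n^eC\Pi_n^e}(\Pi_n^gT\Pi_n^g)}\big)\to 0$, and the triangle inequality for $\Delta$ together with the middle identity completes the argument.

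The step I expect to be the main obstacle is the $\limsup$-half of the first identity: one has to turn an arbitrary finite unitary $V$ on the upper $2n$-block into a genuine unitary $\tilde U_n$ on all of $\mathcal H$ whose conjugate of $T$ has the prescribed block, and then control the truncation error \emph{uniformly in $V$} — it is this uniformity that upgrades the pointwise estimate to a statement about convergence of sets. Everything else — the two remaining identities and the trace-norm manipulations — is routine once Lemma~\ref{lemma_proj_strong_conv}, the estimate \eqref{eq:4}, and the Hausdorff-metric lemmas are in hand.
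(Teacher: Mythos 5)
Your proposal is correct and follows essentially the same route as the paper: the forward inclusion of the first identity via Lemma~\ref{lemma_3}, the reverse via extending the finite unitary $V$ to a unitary on all of $\mathcal H$ (your abstract extension is exactly the paper's explicit $\Gamma^g_{2n}V(\Gamma^e_{2n})^\dagger+\sum_{k>2n}\langle e_k,\cdot\rangle g_k$) combined with a truncation estimate uniform in $V$, and uniform trace-norm estimates for the remaining two identities. The only organizational difference is that you package the first identity through Kuratowski $\liminf$/$\limsup$ and Lemma~\ref{lem:Hausdorff}(b), whereas the paper upgrades the $w$-dependent $N$ of Lemma~\ref{lemma_3} to a uniform one by a finite $\varepsilon/2$-covering of the compact set $\overline{W_C(T)}$ and argues directly in the Hausdorff metric.
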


\begin{proof}
As we want to check convergence with respect to the Hausdorff metric, we have to make sure that all occuring sets are non-empty and compact. The non-empty sets $\overline{W_{\Pi_n^e C\Pi_n^e}(T)}$, $\overline{W_{\Pi_n^e C\Pi_n^e}(\Pi_n^g T\Pi_n^g)}$ are bounded by $\nu_1(C) \Vert T\Vert$ due to
$|\operatorname{tr}(\Pi_n^e C\Pi_n^eU^\dagger \Pi_n^g T\Pi_n^gU)| \leq \nu_1(\Pi_n^e C\Pi_n^e) \Vert \Pi_n^g T\Pi_n^g\Vert \leq \nu_1(C) \Vert T\Vert$ and thus all of them are compact. Here we used
$\Vert U \Vert = \Vert \Pi_n^e\Vert =\Vert \Pi_n^g\Vert = 1$. Again, the $C$-numerical 
range of any pair of matrices is also compact \cite[(2.5)]{article_li_radii}.\medskip

\noindent
The case $C=0$ or $T=0$ is obvious, hence w.l.o.g.~we can assume $C,T\neq 0$. First, we prove the equality
\begin{align*}
\lim_{n\to\infty}W_{[C]^e_{2n}}([T]^g_{2n})=\overline{W_C(T)} \,.
\end{align*}
In view of Lemma \ref{lemma_11}, we have to consider two cases:\medskip

Let $\varepsilon>0$. Then due to compactness, there exist finitely many $w_1,\ldots,w_L\in \overline{W_C(T)}$ such that
\begin{align*}
\bigcup_{k=1}^L B_{\varepsilon/2}(w_k)\supset \overline{W_C(T)}
\end{align*}
where $B_{\varepsilon/2}(w_k)$ denotes open $\varepsilon/2$-balls around $w_k$. By Lemma \ref{lemma_3}, each of these $w_k$ admits $N_k\in\mathbb N$ such that $d(w_k,W_{[C]^e_{n}}([T]^g_{n}))<\varepsilon/2$ for all $n\geq N_k$. Define $N':=\max\lbrace N_1,\ldots,N_L\rbrace$. Now for any $w\in \overline{W_C(T)}$, there exists $k\in\lbrace 1,\ldots,L\rbrace$ such that $|w-w_k|<\varepsilon/2$ and thus
\begin{align*}
d(w,W_{[C]^e_{n}}([T]^g_{n}))\leq |w-w_k|+d(w_k,W_{[C]^e_{n}}([T]^g_{n}))<\varepsilon
\end{align*}
for all $n\geq N'$.

On the other hand, for $G_{2n} := \sum_{k=2n+1}^\infty \langle e_k,\cdot\rangle g_k$ it is easy to see that $(G_{2n})_{n\in\mathbb N}$ converges strongly to the zero operator. By Lemma \ref{lemma_proj_strong_conv} (b) we obtain $N''\in\mathbb N$
such that
\begin{align*}
\max\lbrace\nu_1(CG_{2n}),\nu_1(G_{2n}C))\rbrace<\frac{\varepsilon}{3\|T\|}
\end{align*}
for all $n\geq N''$. Now let $v_n\in W_{[C]^e_{2n}}([T]^g_{2n})$, i.e.~there exists unitary $U_n\in\mathbb C^{2n\times 2n}$
such that $v_n = \operatorname{tr}([C]_{2n}^e U_n^\dagger [T]_{2n}^g U_n  )$. Again, by Lemma \ref{embedding_trace_preserv}, we get 
$v_n = \operatorname{tr}\big(C( \Gamma_{2n}^gU_n(\Gamma_{2n}^e)^\dagger)^\dagger T \Gamma_{2n}^gU_n(\Gamma_{2n}^e)^\dagger\big)$. Next, we define the operator
\begin{align*}
\tilde U_n := \Gamma_{2n}^gU_n(\Gamma_{2n}^e)^\dagger + G_{2n} \in\mathcal B(\mathcal H)
\end{align*}
with $G_{2n} $ given as above. It is readily verified that $\tilde U_n$ is unitary and, therefore, we conclude
$\tilde{v}_n := \operatorname{tr}(C\tilde U^\dagger_n T\tilde U_n)\in W_C(T)$.
Via Lemma \ref{lemma_10} we finally obtain
\begin{align*}
|v_n-\tilde{v}_n|
&=|\operatorname{tr}\big(CG_{2n}T\Gamma_{2n}^gU_n(\Gamma_{2n}^e)^\dagger\big)+\operatorname{tr}\big(C(\Gamma_{2n}^gU_n(\Gamma_{2n}^e)^\dagger)^\dagger TG_{2n}\big)
+\operatorname{tr}(CG_{2n}TG_{2n})|\\
&\leq \big(\nu_1(CG_{2n})+\nu_1(G_{2n}C)+\nu_1(CG_{2n})\big)\Vert T\Vert<\varepsilon
\end{align*}
which yields $d(v_n,\overline{W_C(T)})<\varepsilon$ for all $n\geq N''$. Thus, choosing $N:=\max\lbrace N',N''\rbrace$, Lemma \ref{lemma_11} implies that the Hausdorff distance $\Delta(W_{[C]^e_n}([T]^g_n),\overline{W_C(T)})<\varepsilon$ for all $n\geq N$.

\medskip
Next, we tackle the equality
\begin{align*}
 \lim_{n\to\infty}\overline{W_{\Pi_n^e C\Pi_n^e}(T)}=\overline{W_C(T)}\,.
\end{align*}
Let $\varepsilon>0$ be given. By Lemma \ref{lemma_proj_strong_conv} there exists $\hat N\in\mathbb N$ such that
\begin{align*}
\nu_1(C-\Pi_n^e C\Pi_n^e)<\frac{\varepsilon}{2\Vert T\Vert}
\end{align*}
for all $n\geq\hat N$. For $w\in \overline{W_C(T)}$, there again exists unitary
$U\in\mathcal B(\mathcal H)$ such that $w' := \operatorname{tr}(CU^\dagger TU) \in W_C(T)$ satisfies 
$|w-w'|<\varepsilon/2$. Thus, for $w_n:=\operatorname{tr}(\Pi_n^e C\Pi_n^eU^\dagger TU)\in W_{\Pi_n^e C\Pi_n^e}(T)$ one has
\begin{align*}
|w-w_n|\leq |w-w'|+|w'-w_n|<
\frac{\varepsilon}{2}+\nu_1(C-\Pi_n^e C\Pi_n^e)\Vert U^\dagger TU\Vert < \varepsilon
\end{align*}
for all $n\geq N$.

On the other hand, let $v_n\in \overline{W_{\Pi_n^e C\Pi_n^e}(T)}$, i.e.~there exists unitary 
$U_n \in \mathcal B(\mathcal H)$ such that $v'_n := \operatorname{tr}(\Pi_n^e C\Pi_n^e U_n^\dagger T U_n)$
satisfies $|v_n - v_n'| < \varepsilon/2$. Moreover, for $\tilde{v}_n := \operatorname{tr}(C U_n^\dagger T U_n) \in W_{C}(T)$,
we obatin
\begin{align*}
|v_n-\tilde{v}_n| \leq |v_n-v'_n| + |v'_n-\tilde{v}_n|
< \frac{\varepsilon}{2} + \nu_1(C-\Pi_n^e C\Pi_n^e)\Vert U_n^\dagger T U_n\Vert < \varepsilon
\end{align*}
for all $n\geq N$. Again, Lemma \ref{lemma_11} implies $\lim_{n\to\infty}\overline{W_{\Pi_n^e C\Pi_n^e}(T)} = \overline{W_C(T)}$.\medskip 

\noindent Finally, let $T$ be additionally compact and $\varepsilon>0$. By Lemma \ref{lemma_proj_strong_conv} there exists $\tilde N\in\mathbb N$ such that
\begin{align*}
\|T-\Pi_n^g T\Pi_n^g\|<\frac{\varepsilon}{2\nu_1(C)}
\end{align*}
for all $n\geq\hat N$. As
\begin{align*}
|\operatorname{tr}(CU^\dagger TU) &- \operatorname{tr}(\Pi_n^e C\Pi_n^eU^\dagger \Pi_n^g T\Pi_n^gU)| \leq |\operatorname{tr}(CU^\dagger TU)-\operatorname{tr}(\Pi_n^e C\Pi_n^eU^\dagger TU)|\\
&+ |\operatorname{tr}(\Pi_n^e C\Pi_n^eU^\dagger TU)-\operatorname{tr}(\Pi_n^e C\Pi_n^eU^\dagger \Pi_n^g T\Pi_n^gU)|\,,
\end{align*}
one can choose $N:=\max\lbrace\hat N,\tilde N\rbrace$ to obtain as above $\Delta(\overline{W_{\Pi_n^e C\Pi_n^e}(\Pi_n^g T\Pi_n^g)},\overline{W_C(T)})<\varepsilon$ for all $n\geq N$. 
\end{proof}

\begin{remark}
In general, (\ref{eq:lemma_2_2}) does not hold for arbitrary bounded operators $T$ since -- even if the limit exists --  one has only the inclusion 
$\overline{W_C(T)} \subseteq \lim_{n\to\infty}\overline{W_{C_n}(T_n)}$ as the above
proof shows. A simple example 
which demonstrates this failing is given by Example \ref{ex_2} in Appendix \ref{appendix_examples}. 
\end{remark}

Now we are prepared to state and prove our first main result of this section.

\begin{theorem}\label{theorem_1a}
Let $C\in\mathcal B^1(\mathcal H)$ and $T\in\mathcal B(\mathcal H)$ be given. If $C$ is normal
with collinear eigenvalues or if $T$ is essentially self-adjoint, then $\overline{W_C(T)}$ is convex.
\end{theorem}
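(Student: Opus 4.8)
The plan is to reduce everything to the finite‑dimensional theory by means of the approximation in Theorem \ref{lemma_2} together with the stability of convexity under Hausdorff limits established in Lemma \ref{lemma_5}(c). Indeed, for \emph{any} orthonormal bases $(e_n)_{n\in\mathbb N}$ and $(g_n)_{n\in\mathbb N}$ of $\mathcal H$, Theorem \ref{lemma_2} gives $\overline{W_C(T)}=\lim_{n\to\infty}W_{[C]^e_{2n}}([T]^g_{2n})$ as a Hausdorff limit of a bounded sequence of non‑empty compact subsets of $\mathbb C$. Hence, by Lemma \ref{lemma_5}(c), it suffices to choose the bases so that each finite‑dimensional set $W_{[C]^e_{2n}}([T]^g_{2n})$ is convex. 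For this we invoke two classical facts about $k\times k$ matrices: (i) $W_D(M)$ is convex whenever $D$ is normal with collinear eigenvalues \cite{article_westwick,article_poon}; and (ii) $W_D(M)=W_M(D)$, which follows from cyclicity of the trace and the fact that $U\mapsto U^\dagger$ is a bijection of the unitary group onto itself.

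Assume first that $C$ is normal with collinear eigenvalues. Being trace class, $C$ is compact, so the spectral theorem for compact normal operators yields an orthonormal basis $(e_n)_{n\in\mathbb N}$ of $\mathcal H$ consisting of eigenvectors of $C$; take $(g_n)_{n\in\mathbb N}$ arbitrary. With respect to $(e_n)$ the matrix $[C]^e_{2n}=(\Gamma^e_{2n})^\dagger C\,\Gamma^e_{2n}$ is diagonal and its diagonal entries are eigenvalues of $C$, hence collinear; in particular $[C]^e_{2n}$ is normal with collinear eigenvalues. Fact (i) then shows that $W_{[C]^e_{2n}}([T]^g_{2n})$ is convex for every $n$, and the reduction above yields convexity of $\overline{W_C(T)}$.

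Now assume $T$ is essentially self‑adjoint, i.e.\ $T=\alpha S+\beta\,\mathrm{id}_{\mathcal H}$ with $S\in\mathcal B(\mathcal H)$ self‑adjoint and $\alpha,\beta\in\mathbb C$. From $\operatorname{tr}(CU^\dagger(\alpha S+\beta\,\mathrm{id})U)=\alpha\operatorname{tr}(CU^\dagger SU)+\beta\operatorname{tr}(C)$ we get $W_C(T)=\alpha\,W_C(S)+\beta\operatorname{tr}(C)$, and since $z\mapsto\alpha z+\beta\operatorname{tr}(C)$ is affine it preserves convexity and commutes with taking closures (the case $\alpha=0$ being trivial); so we may assume $T$ itself is self‑adjoint. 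Choosing $(e_n)$, $(g_n)$ arbitrarily, the matrix $[T]^g_{2n}=(\Gamma^g_{2n})^\dagger T\,\Gamma^g_{2n}$ is the compression of a self‑adjoint operator, hence Hermitian, hence normal with (real, thus) collinear eigenvalues. Combining (ii) and (i), $W_{[C]^e_{2n}}([T]^g_{2n})=W_{[T]^g_{2n}}([C]^e_{2n})$ is convex for every $n$, and once more Theorem \ref{lemma_2} and Lemma \ref{lemma_5}(c) give convexity of $\overline{W_C(T)}$.

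The argument is short precisely because the analytic substance has been front‑loaded into Theorem \ref{lemma_2}; the only points needing care are the two matrix facts (i)–(ii) — in particular that the finite‑dimensional symmetry $W_D(M)=W_M(D)$ is what allows one to move the "collinear normality" hypothesis from the $T$‑slot to the $C$‑slot, a symmetry that is \emph{not} available directly in infinite dimensions — and the selection of an eigenbasis for the compact normal $C$ in the first case, which is what makes the compressions $[C]^e_{2n}$ genuinely normal with collinear spectrum for \emph{all} $n$, as required for Lemma \ref{lemma_5}(c). I do not expect any genuine obstacle beyond verifying these arrangements.
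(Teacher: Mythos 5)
Your proposal is correct and follows essentially the same route as the paper: approximate $\overline{W_C(T)}$ by the finite-dimensional sets $W_{[C]^e_{2n}}([T]^g_{2n})$ via Theorem \ref{lemma_2}, invoke the classical Westwick--Poon convexity result, and pass to the limit with Lemma \ref{lemma_5}(c). The only cosmetic differences are that the paper rotates $C$ (resp.\ $T$) into a self-adjoint operator before compressing, whereas you use collinearity of the compressions directly and make explicit the finite-dimensional symmetry $W_D(M)=W_M(D)$ that the paper leaves implicit in the essentially self-adjoint case.
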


Recall, that a set in the complex plane is said to be \emph{collinear} if all of its elements lie on a common line.
Moreover, as in the matrix case, e.g.~\cite{article_marcus}, an operator $T \in \mathcal B(\mathcal H)$ is called \emph{essentially self-adjoint} if there exist $\theta\in\mathbb R$ and $\xi\in\mathbb C$ such 
that $e^{-i\theta} (T-\xi\operatorname{id}_{\mathcal H})$ is self-adjoint.

\begin{proof}
First, assume that $C$ is normal with collinear eigenvalues so as $C$ is compact as it is trace class, \cite[Thm. VIII.§4.6]{berberian1976} states that
there exists an orthonormal basis $(e_n)_{n\in\mathbb N}$ of $\mathcal H$ such that
$C=\sum_{n=1}^\infty\gamma_n\langle e_n,\cdot\rangle e_n$. By assumption, the eigenvalues\footnote{Note 
that $(\gamma_n)_{n\in\mathbb N}$ is the \textit{modified} eigenvalue sequence of $C$ as described
at the beginning of Section \ref{sect_C_spectrum}.}
$\gamma_n$ are collinear and $\gamma_n\to 0$ for $n\to\infty$ since $C$ is compact. This implies the 
existence of $\theta\in\mathbb R$ such that $e^{i\theta}\gamma_n\in\mathbb R$ for all $n\in\mathbb N$
and thus $e^{i\theta}C$ is self-adjoint.
By Theorem \ref{lemma_2}
\begin{align*}
\overline{W_{C}(T)}=\overline{W_{e^{i\theta}C}(e^{-i\theta}T)}=\lim_{n\to\infty}W_{[e^{i\theta}C]_{2n}}([e^{-i\theta}T]_{2n})
\end{align*}
where $[\,\cdot\,]_{2n}$ for all $n\in\mathbb N$ are the maps \eqref{cut_out_operator} with respect to $(e_n)_{n\in\mathbb N}$. Evidently, $[B]_n^\dagger=[B^\dagger]_n$ for all $B\in\mathcal B(\mathcal H)$ 
and all $n\in\mathbb N$. Therefore, $[e^{i\theta}C]_{2n}$ is hermitian and thus 
$W_{[e^{i\theta}C]_{2n}}([e^{-i\theta}T]_{2n})$ is convex for all $n\in\mathbb N$, cf.~\cite{article_poon}. 
Hence, Lemma \ref{lemma_5} (c) yields the desired result. The case $T$ being essentially self-adjoint
can be handled completely along the same line as then
\begin{align*}
W_C(T)= e^{i\theta} W_C(H)+\xi\operatorname{tr}(C)
\end{align*}
where $H:=e^{-i\theta} (T-\xi\operatorname{id}_{\mathcal H})$ is self-adjoint by definition.
\end{proof}

\begin{remark}
Unlike in finite dimensions, where $W_C(T)$ can be further located via the $C$-spectrum of $T$, it 
is intricate to obtain a similar result for infinite dimensions because there does not exist a 
meaningful counterpart of the $C$-spectrum for arbitrary bounded operators. However, if $T$ is compact
one can in fact define the $C$-spectrum of $T$ and generalize well-known properties of the matrix case, 
see Section \ref{sect_C_spectrum}.
\end{remark}

Before proceeding with the star-shapedness of $\overline{W_C(T)}$, we briefly recall the
definition\footnote{Some authors prefer a different definition which, however, is equivalent
to the stated one, cf.~\cite[Thm.~34.9]{bonsallduncan}.} of 
the \emph{essential numerical range} $W_e(T)$ of an operator $T \in \mathcal B(\mathcal H)$,
which can be given as follows
\begin{align*}
W_e(T) :=  \Big\lbrace \lim_{n \to \infty} \langle f_n,Tf_n\rangle\,\Big|\, (f_n)_{n \in \mathbb N}\;\text{ is ONS in }\mathcal H\Big\rbrace \subset \mathbb C\,.
\end{align*}
It is well known that $W_e(T)$ is a non-empty, convex and compact subset of $\mathbb C$, 
cf.~\cite[Thm.~34.2]{bonsallduncan}.

\begin{proposition}\label{prop_w_e}
Let $T\in\mathcal B(\mathcal H)$ and $\mu\in\mathbb C$ be given. The following statements are equivalent.
\begin{itemize}
\item[(a)] $\mu$ belongs to the essential numerical range $W_e(T)$, i.e.~there exists
an orthonormal system $(f_n)_{n \in \mathbb N}$ in $\mathcal H$ such that $\lim_{n \to \infty} \langle f_n,Tf_n\rangle = \mu$.\vspace{4pt}
\item[(b)] There exists an orthonormal system $(f_n)_{n\in\mathbb N}$ in $\mathcal H$ such that
\begin{align}\label{eq:theorem_app_0_1_1}
\lim_{n\to\infty}\frac{1}{n}\sum_{j=1}^n\langle f_j,Tf_j\rangle=\mu\,.
\end{align}
\item[(c)] There exists an orthonormal basis $(e_n)_{n\in\mathbb N}$ of $\mathcal H$ such that
\begin{align}\label{eq:theorem_app_0_1_2}
\lim_{n\to\infty}\frac{1}{n}\sum_{j=1}^n\langle e_j,Te_j\rangle=\mu\,.
\end{align}
\end{itemize}
\end{proposition}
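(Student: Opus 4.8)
The plan is to establish the cycle of implications (a) $\Rightarrow$ (b) $\Rightarrow$ (c) $\Rightarrow$ (a); since every orthonormal basis is in particular an orthonormal system, (c) $\Rightarrow$ (b) holds trivially, so this already yields the full equivalence. The implication (a) $\Rightarrow$ (b) needs nothing beyond the Ces\`aro limit theorem: if $(f_n)_{n\in\mathbb N}$ is an orthonormal system with $\langle f_n,Tf_n\rangle\to\mu$, then the arithmetic means of this convergent scalar sequence converge to $\mu$ as well, so the very same orthonormal system witnesses (b).

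For (b) $\Rightarrow$ (c) the idea is to complete the given orthonormal system $(f_n)_{n\in\mathbb N}$ to an orthonormal basis of $\mathcal H$ by interleaving it, very sparsely, with an orthonormal basis of the orthogonal complement $\mathcal N:=(\overline{\operatorname{span}}\{f_n:n\in\mathbb N\})^\perp$. If $\mathcal N=\{0\}$, then $(f_n)_{n\in\mathbb N}$ already is a basis and we are done. Otherwise choose an orthonormal basis $(h_k)_k$ of $\mathcal N$ (a finite or countably infinite list) and build $(e_n)_{n\in\mathbb N}$ by writing down $f_1,f_2,\ldots$ in order and inserting $h_k$ at position $n_k$, where the positions $n_k$ are taken so sparse (e.g. $n_k=2^k$, or arbitrary fixed positions if the list is finite) that the number of inserted $h$'s among the first $N$ vectors is $o(N)$. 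Denoting by $M=M(N)$ the number of $f$'s among the first $N$ positions, one has $M/N\to 1$ and
\begin{align*}
\frac1N\sum_{j=1}^N\langle e_j,Te_j\rangle=\frac{M}{N}\cdot\frac1M\sum_{j=1}^M\langle f_j,Tf_j\rangle+\frac1N\sum_{\text{inserted }h_k}\langle h_k,Th_k\rangle\,,
\end{align*}
where the first summand tends to $1\cdot\mu$ by (b) and the modulus of the second is at most $\frac{N-M}{N}\Vert T\Vert\to 0$; hence (c) holds for $(e_n)_{n\in\mathbb N}$.

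The implication (c) $\Rightarrow$ (a) is the heart of the matter, and I expect it to be the main obstacle: the converse of the Ces\`aro limit theorem fails, so one cannot simply pass to a subsequence of $(\langle e_n,Te_n\rangle)_{n\in\mathbb N}$. Instead I would exploit convexity. Group the basis $(e_n)_{n\in\mathbb N}$ into consecutive disjoint blocks $\mathcal V_k:=\operatorname{span}\{e_j:p_{k-1}<j\le p_k\}$, where $0=p_0<p_1<p_2<\cdots$ is chosen so that the block averages $a_k:=\frac{1}{p_k-p_{k-1}}\sum_{j=p_{k-1}+1}^{p_k}\langle e_j,Te_j\rangle$ satisfy $|a_k-\mu|<1/k$; this is possible because, for fixed $p_{k-1}$, one has $a_k\to\mu$ as $p_k\to\infty$ by (c) (write $a_k$ in terms of the two partial Ces\`aro means at $p_k$ and $p_{k-1}$). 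On the finite-dimensional space $\mathcal V_k$ the numerical range $\{\langle x,Tx\rangle:x\in\mathcal V_k,\ \Vert x\Vert=1\}$ is convex by the Toeplitz--Hausdorff theorem and contains each diagonal entry $\langle e_j,Te_j\rangle$ with $p_{k-1}<j\le p_k$, hence it contains their equal-weight average $a_k$; so there is a unit vector $f_k\in\mathcal V_k$ with $\langle f_k,Tf_k\rangle=a_k$. Since distinct blocks $\mathcal V_k$ are mutually orthogonal, $(f_k)_{k\in\mathbb N}$ is an orthonormal system, and $\langle f_k,Tf_k\rangle=a_k\to\mu$; thus $\mu\in W_e(T)$ by definition, which closes the cycle.
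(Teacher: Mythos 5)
Your proposal is correct, and the implications (a) $\Rightarrow$ (b) (Ces\`aro means) and (b) $\Rightarrow$ (c) (sparse interleaving of a basis of the orthogonal complement, with the inserted vectors contributing $O((N-M)/N)\Vert T\Vert\to 0$ to the averages) coincide with the paper's argument. Where you genuinely diverge is in closing the cycle: the paper proves (b) $\Rightarrow$ (a) by showing that $\mu$ lies in the closed convex hull $E$ of the accumulation points of $(\langle f_n,Tf_n\rangle)_{n}$ -- via a separating linear functional and a contradiction with the Ces\`aro limit -- and then invokes the known convexity and compactness of $W_e(T)$ to get $E\subseteq W_e(T)$. You instead prove (c) $\Rightarrow$ (a) by partitioning the basis into mutually orthogonal finite-dimensional blocks whose diagonal averages $a_k$ tend to $\mu$ (the inductive choice of $p_k$ works exactly as you say, since for fixed $p_{k-1}$ the block average tends to $\mu$ as $p_k\to\infty$), and then using the Toeplitz--Hausdorff theorem on each block to realize $a_k$ exactly as $\langle f_k,Tf_k\rangle$ for a unit vector $f_k$ in that block; orthogonality of the blocks makes $(f_k)_k$ an orthonormal system. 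Your route is more self-contained -- it needs only the finite-dimensional Toeplitz--Hausdorff theorem rather than the convexity and compactness of $W_e(T)$ quoted from the literature -- and it produces exact attainment of the block averages rather than an approximation; the paper's separation argument, on the other hand, works directly from an orthonormal system (so it proves (b) $\Rightarrow$ (a) without passing through (c)), though your block argument would apply verbatim to an orthonormal system as well. Both proofs are complete; yours is arguably the more elementary one.
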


\begin{proof}
(a) $\Longrightarrow$ (b): It is well known that the limit of a convergent sequence and the limit of its Ces\`aro mean are equal.

\medskip
\noindent
(b) $\Longrightarrow$ (a): Consider any orthonormal system $(f_n)_{n\in\mathbb N}$ which satisfies 
(\ref{eq:theorem_app_0_1_1}). We will show the relation
\begin{align}\label{eq:HP}
\mu \in \overline{\operatorname{conv}\big\lbrace \operatorname{HP} \big((\langle f_n,Tf_n\rangle)_{n\in\mathbb N}\big)\big\rbrace} =: E\,,
\end{align}
where $\operatorname{HP}(\cdot)$ denotes the set of all accumulation points of the respective sequence.
Once \eqref{eq:HP} is guaranteed we can conclude $\mu \in W_e(T)$ because the convexity and compactness
of $W_e(T)$ readily implies $E \subseteq W_e(T)$. Let us assume $\mu\notin E$. Since $E$ is obviously convex and compact, there exists a $\mathbb C$-linear 
functional $\varphi:\mathbb C\to\mathbb C$ with
\begin{align*}
\operatorname{Re}(\varphi(\mu)) < \min_{\lambda \in E}\operatorname{Re}(\varphi(\lambda))\,.
\end{align*}
Taking into account that the sequence $(\langle f_n,Tf_n\rangle)_{n\in\mathbb N}$ is bounded as $T$ is bounded,
a straightforward application of the Bolzano-Weierstra{\ss} Theorem shows that there exist only finitely 
many indices $n_1 < n_2  < \ldots < n_k \in \mathbb N$ such that 
\begin{align*}
\operatorname{Re}\big(\varphi(\langle f_{n_j},Tf_{n_j}\rangle)\big) \leq 
\frac{1}{2}\Big(\min_{\lambda \in E}\operatorname{Re}(\varphi(\lambda))+\operatorname{Re}(\varphi(\mu))\Big) =: \kappa
\end{align*}
for all $j \in\lbrace 1,\ldots,k\rbrace$. This yields the following contradicting estimate:
\begin{align*}
\operatorname{Re}(\varphi(\mu))&=\operatorname{Re}\Big(\varphi\Big( \lim_{n\to\infty}\frac{1}{n}\sum_{j=1}^n\langle f_j,Tf_j\rangle \Big)\Big)\\
& = \operatorname{Re}\Big(\varphi\Big( \lim_{n\to\infty}\frac{1}{n}\sum_{j=1}^{n_k}\langle f_j,Tf_j\rangle \Big)\Big) + \operatorname{Re}\Big(\varphi\Big( \lim_{n\to\infty}\frac{1}{n}\sum_{j=n_k+1}^n\langle f_j,Tf_j\rangle \Big)\Big)\\
&=\lim_{n\to\infty}\frac{1}{n}\sum_{j=n_k+1}^n\operatorname{Re}\big(\varphi(\langle f_j,Tf_j\rangle)\big)\geq 
\lim_{n\to\infty}\frac{\kappa (n - n_k)}{n} > \operatorname{Re}(\varphi(\mu))
\end{align*}
Hence, it follows $\mu\in E$.

\medskip
\noindent
(c) $\Longrightarrow$ (b): $\checkmark$

\medskip
\noindent
(b) $\Longrightarrow$ (c): Let $(f_n)_{n\in\mathbb N}$ be an orthonormal system in $\mathcal H$ such that
\eqref{eq:theorem_app_0_1_1} holds which we then extend to an orthonormal basis of $\mathcal H$. If,
in this procedure, we have to add only finitely many vectors (or none) we are obviously done. 
Therefore, we assume in the remaining part of the proof that we have to add countably infinitely
many vectors $(g_n)_{n\in\mathbb N}$. This allows us to define a new orthonormal basis $(e_n)_{n\in\mathbb N}$
by sorting $(g_n)_{n\in\mathbb N}$ into $(f_n)_{n\in\mathbb N}$ as follows: 
For $n = 2^k$ with $k\in\mathbb N$ choose $e_{n}=g_k$, while the gaps in between are filled up with 
the vectors of $(f_n)_{n\in\mathbb N}$, i.e.
\begin{align*}
(e_n)_{n\in\mathbb N}=(f_1,g_1,f_2,g_2,f_3,f_4,f_5,g_3,f_6, \ldots)\,.
\end{align*}
In doing so, for $2^k \leq n < 2^{k+1}$ we obtain the following identity
\begin{align*}
\frac1n\sum_{j=1}^n\langle e_j,Te_j\rangle=\Big(1-\frac{k}{n}\Big)\bigg(\frac{1}{n-k}\sum_{j=1}^{n-k}\langle f_j,Tf_j\rangle\bigg)+ \frac{1}{n}\sum_{j=1}^k\langle g_j,Tg_j\rangle\,.
\end{align*}
Obviously, $ \frac{k}{n}\to 0$ as $k \to \infty$ so 
\begin{align*}
\lim_{k\to\infty}\Big|\frac{1}{n}\sum_{j=1}^k\langle g_j,Tg_j\rangle\Big|
\leq\lim_{k\to\infty} \frac{k}{n}\Vert T\Vert=0
\end{align*}
and we conclude
\begin{align*}
\lim_{n \to \infty}\frac1n\sum_{j=1}^n\langle e_j,Te_j\rangle = 
\lim_{n \to \infty}\frac{1}{n-k}\sum_{j=1}^{n-k}\langle f_j,Tf_j\rangle = \mu
\end{align*}
as this is just a subsequence of \eqref{eq:theorem_app_0_1_1}.
\end{proof}

After these preliminaries, our second main result of this section reads as follows.

\begin{theorem}\label{theorem_1}
Let $C\in\mathcal B^1(\mathcal H)$ and $T\in\mathcal B(\mathcal H)$ be given. Then $\overline{W_C(T)}$
is star-shaped with respect to $\operatorname{tr}(C)W_e(T)$, i.e. all $z\in\operatorname{tr}(C)W_e(T)$ are star-centers of $\overline{W_C(T)}$.
\end{theorem}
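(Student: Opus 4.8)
The plan is to reduce the infinite-dimensional star-shapedness to the finite-dimensional version via the approximation result in Theorem~\ref{lemma_2}, together with the clean-up we get from part (d) of Lemma~\ref{lemma_5}. The key point is that $\operatorname{tr}(C)W_e(T)$ is a whole set of star-centers, so it suffices to fix one $z_0\in\operatorname{tr}(C)W_e(T)$, write $z_0=\operatorname{tr}(C)\mu$ with $\mu\in W_e(T)$, and show that $z_0$ is a star-center of $\overline{W_C(T)}$; the claim for the whole set follows at once. So fix such a $\mu$. By the new characterization of the essential numerical range in Proposition~\ref{prop_w_e}, in particular the equivalence (a)$\Leftrightarrow$(c), there is an orthonormal \emph{basis} $(e_n)_{n\in\mathbb N}$ of $\mathcal H$ with
\begin{align*}
\lim_{n\to\infty}\frac1n\sum_{j=1}^n\langle e_j,Te_j\rangle=\mu\,.
\end{align*}
This basis is the one with respect to which all the ``cut-out'' and block maps below are taken.

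Next I would invoke the finite-dimensional theorem of Cheung--Tsing \cite{article_cheungtsing}: for matrices $A',C'\in\mathbb C^{m\times m}$ the set $W_{C'}(A')$ is star-shaped with respect to $\operatorname{tr}(C')\operatorname{tr}(A')/m$. Applying this with $m=2n$, $C'=[C]^e_{2n}$ and $A'=[T]^e_{2n}$ (so here I take $g=e$, i.e.\ the same basis on both sides in Theorem~\ref{lemma_2}), the set $W_{[C]^e_{2n}}([T]^e_{2n})$ is star-shaped with respect to
\begin{align*}
z_n:=\frac{\operatorname{tr}([C]^e_{2n})\operatorname{tr}([T]^e_{2n})}{2n}
=\Big(\sum_{j=1}^{2n}\langle e_j,Ce_j\rangle\Big)\cdot\frac1{2n}\sum_{j=1}^{2n}\langle e_j,Te_j\rangle\,.
\end{align*}
As $n\to\infty$ the first factor converges to $\operatorname{tr}(C)$ because the partial sums of \eqref{eq:trace} converge, and the second factor is a subsequence (over even indices) of the Ces\`aro means above, hence converges to $\mu$. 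Therefore $z_n\to\operatorname{tr}(C)\mu=z_0$.

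Now combine the pieces. Theorem~\ref{lemma_2} gives $\lim_{n\to\infty}W_{[C]^e_{2n}}([T]^e_{2n})=\overline{W_C(T)}$ in the Hausdorff metric, the sets $W_{[C]^e_{2n}}([T]^e_{2n})$ are non-empty compact and star-shaped with respect to $z_n$, and $z_n\to z_0$. Hence Lemma~\ref{lemma_5}(d) applies verbatim and shows that $\overline{W_C(T)}$ is star-shaped with respect to $z_0$. Since $z_0\in\operatorname{tr}(C)W_e(T)$ was arbitrary, every point of $\operatorname{tr}(C)W_e(T)$ is a star-center of $\overline{W_C(T)}$, which is the assertion. (The degenerate cases $C=0$ or $T=0$, and the case where $W_e(T)$ being a point forces everything trivially, need no separate treatment since the argument above is uniform.)

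The main obstacle, and the reason Proposition~\ref{prop_w_e} was proved beforehand, is precisely the identification of the limiting star-center: the naive finite-dimensional center $\operatorname{tr}([C]_{2n})\operatorname{tr}([T]_{2n})/(2n)$ involves the \emph{diagonal} Ces\`aro mean $\frac1{2n}\sum_{j\le 2n}\langle e_j,Te_j\rangle$, and for a generic orthonormal basis this need not converge at all, let alone to a prescribed point of $W_e(T)$. The content of Proposition~\ref{prop_w_e}(c) is that one can \emph{choose} the basis so that this diagonal mean converges to any desired $\mu\in W_e(T)$; feeding that special basis into the approximation machinery of Theorem~\ref{lemma_2} is what makes the limit of the centers land on $\operatorname{tr}(C)\mu$. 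Everything else is bookkeeping with Lemma~\ref{lemma_5} and the convergence $\sum_{j=1}^{2n}\langle e_j,Ce_j\rangle\to\operatorname{tr}(C)$.
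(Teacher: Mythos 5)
Your proposal is correct and follows essentially the same route as the paper: pick $\mu\in W_e(T)$, use Proposition~\ref{prop_w_e}(c) to choose a basis whose diagonal Ces\`aro means converge to $\mu$, invoke Cheung--Tsing for the star-centers $\operatorname{tr}([C]_{2n})\operatorname{tr}([T]_{2n})/(2n)\to\operatorname{tr}(C)\mu$, and conclude via Theorem~\ref{lemma_2} and Lemma~\ref{lemma_5}(d). The only cosmetic difference is that you justify $\operatorname{tr}([C]_{2n})\to\operatorname{tr}(C)$ by convergence of the partial sums of \eqref{eq:trace}, whereas the paper argues via $\nu_1(C-C\Pi_{2n})\to 0$; both are valid.
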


\begin{proof}
Let any $\mu\in W_e(T)$. By Proposition \ref{prop_w_e} there exists an orthonormal basis $(e_n)_{n\in\mathbb N}$ of $\mathcal H$ such that (\ref{eq:theorem_app_0_1_2}) holds. Moreover, note that
\begin{align*}
\langle\hat e_j,[T]_{2n}\hat e_j\rangle=\langle\Gamma_{2n}\hat e_j,T\,\Gamma_{2n}\hat e_j\rangle=\langle e_j,Te_j\rangle
\end{align*}
for all $n\in\mathbb N$ and $j \in \lbrace 1,\ldots,2n\rbrace$, where $[\,\cdot\,]_n$ are the maps given by \eqref{cut_out_operator} with respect to $(e_n)_{n\in\mathbb N}$. Hence, it follows
\begin{align*}
\lim_{n\to\infty} \frac{\operatorname{tr}([T]_{2n})}{2n}=\lim_{n\to\infty}\frac{1}{2n}\sum_{j=1}^{2n}\langle e_j,Te_j\rangle=\mu\,.
\end{align*}
Additionally, by Lemma \ref{embedding_trace_preserv} and \ref{lemma_proj_strong_conv}, one has
\begin{align*}
\lim_{n\to\infty}|\operatorname{tr}(C)-\operatorname{tr}([C]_{2n})|=\lim_{n\to\infty} |\operatorname{tr}(C-C\Pi_{2n})|\leq \lim_{n\to\infty} \nu_1(C-C\Pi_{2n})=0.
\end{align*}
This shows $\operatorname{tr}([C]_{2n})\operatorname{tr}([T]_{2n})/(2n) \to \operatorname{tr}(C)\mu$ for
$n\to\infty$. On the other hand, $W_{[C]_{2n}}([T]_{2n})$ is star-shaped with respect to 
$\operatorname{tr}([C]_{2n})\operatorname{tr}([T]_{2n})/(2n)$ for all $n\in\mathbb N$,
cf.~\cite[Thm. 4]{article_cheungtsing}. This means that the sequence of star-centers
converges to $\operatorname{tr}(C)\mu$ and thus Lemma \ref{lemma_5} (d) and Theorem \ref{lemma_2} imply that $\overline{W_{C}(T)}$ is star-shaped with respect to 
$\operatorname{tr}(C)\mu$. As $\mu\in W_e(T)$ was chosen arbitrarily, the proof is complete.
\end{proof}

\begin{remark}
In finite dimensions, Tsing \cite{tsing_star_shaped} showed that for normal $C\in\mathbb C^{n\times n}$ and arbitrary $A\in\mathbb C^{n\times n}$, $W_C(A)$ is star-shaped with respect to $(\operatorname{tr}(C)\operatorname{tr}(A))/n$. Nine years later Hughes \cite{article_hughes} proved, in our words, that $\overline{W_C(T)}$ is star-shaped with respect to $\operatorname{tr}(C)W_e(T)$ for all normal $C\in\mathcal F(\mathcal H)$ and all $T\in\mathcal B(\mathcal H)$. This was generalized to arbitrary $C\in\mathcal F(\mathcal H)$ by Jones \cite{article_jones} and in finite dimensions to arbitrary $C\in\mathbb C^{n\times n}$ by Cheung and Tsing \cite{article_cheungtsing}. 

However, none of the authors provided a satisfying link between the star-center in finite dimensions
and the set of star-centers in infinite dimensions. The above proof as well as characterization (c) of Proposition \ref{prop_w_e}, which is new to our knowledge, now 
clearly suggest that the set $\operatorname{tr}(C)W_e(T)$ is a natural replacement of
$(\operatorname{tr}(C)\operatorname{tr}(A))/n$ in infinite dimensions.
\end{remark}

\noindent \textbf{Open Problems.}
\begin{itemize}
\item[(a)] The $C$-numerical range of $T \in B(\mathcal H)$ is nothing else than
the range of the bounded linear functional $\ell(\cdot) := \operatorname{tr}(C(\cdot))$ restricted
to the unitary orbit 
$\lbrace U^\dagger TU \,|\,U\in\mathcal B(\mathcal H)\text{ unitary}\rbrace$ of $T$. 
Since it is well known that $\mathcal B^1(\mathcal H)$ (by the above identification)
is only a proper subspace of the dual space $\mathcal B(\mathcal H)'$ of $\mathcal B(\mathcal H)$, it is quite natural to ask whether convexity or star-shapedness of 
\begin{align*}
\lbrace\ell(U^\dagger TU)\,|\,U\in\mathcal B(\mathcal H)\text{ unitary}\rbrace
\end{align*}
holds for \textit{arbitrary} $\ell\in\mathcal B(\mathcal H)'$.\vspace{4pt}
\item[(b)] Westwick \cite{article_westwick} showed, in our words, that for all hermitian $C\in\mathcal F(\mathcal H)$ and all $T\in\mathcal B(\mathcal H)$, the $C$-numerical range $W_C(T)$ is convex (without closure). Thus it is natural to ask whether or not Theorem \ref{theorem_1a} holds if $\overline{W_C(T)}$ is replaced by $W_C(T)$. For Theorem \ref{theorem_1}, we know that it fails if $\overline{W_C(T)}$ is replaced by $W_C(T)$ due to the fact that the set $\operatorname{tr}(C)W_e(T)$ may drop out of $W_C(T)$ (consider e.g.~$C=T=\operatorname{diag}(1/2^n)_{n\in\mathbb N}$ with respect to an arbitrary orthonormal basis, obviously $W_e(T)=\lbrace 0\rbrace$ but $0\notin W_C(T)$ as the respective traces are always positive). However, this of course does not rule out that $W_C(T)$ may be still star-shaped yet with respect to another star-center. 
\end{itemize}

\subsection{The $C$-spectrum}\label{sect_C_spectrum}

The $C$-spectrum is a powerful tool in order to gain further knowledge about the $C$-numerical range
which was first introduced for matrices in \cite{article_marcus}. We want to transfer this concept
and some of the known results to infinite dimensions.

In order to define the $C$-spectrum, we first have to fix the term \emph{eigenvalue sequence} of a 
compact operator $T \in \mathcal K(\mathcal H)$. In general, it is obtained by arranging the 
(necessarily countably many) non-zero eigenvalues in decreasing order with respect to their
absolute values and each eigenvalue is repeated as many times as its algebraic multiplicity\footnote{By
\cite[Prop. 15.12]{MeiseVogt}, every non-zero element $\lambda \in \sigma(T)$ of the spectrum
of $T$ is an eigenvalue of $T$ and has a well-defined finite algebraic multiplicity $\nu_a(\lambda)$,
e.g.~$\nu_a(\lambda) := \dim \ker (T - \lambda I)^{n_0}$,  where $n_0 \in \mathbb N$ is the smallest 
natural number $n \in \mathbb N$ such that $\ker (T - \lambda I)^n = \ker (T - \lambda I)^{n+1}$.
\label{footnote_alg_mult}}. If only finitely many non-vanishing eigenvalues exist, the sequence is filled
up with zeros, see \cite[Ch. 15]{MeiseVogt}. For our purposes, we have to pass to a slightly 
\emph{modified eigenvalue sequence} as follows: 

\begin{itemize}
\item 
If the range of $T$ is infinite-dimensional and the kernel of $T$ finite-dimensional
then put $\operatorname{dim}(\operatorname{ker}T)$ zeros at the beginning of the eigenvalue
sequence of $T$. \vspace{4pt}
\item 
If the range and the kernel of $T$ are infinite-dimensional, mix infinitely many zeros into the
eigenvalue sequence\footnote{Since in Definition \ref{defi_3} arbitrary permutations
will be applied to the modified eigenvalue sequence, we do not need to specify this mixing 
procedure further, cf. also Lemma \ref{Lemma_5b}.} of $T$.\vspace{4pt}
\item
If the range of $T$ is finite-dimensional, leave the eigenvalue sequence of $T$ unchanged. 
\end{itemize}

\begin{definition}[$C$-spectrum]\label{defi_3}
For $C\in\mathcal B^1(\mathcal H)$ with modified eigenvalue sequence $(\gamma_n)_{n\in\mathbb N}$ and 
$T\in\mathcal K(\mathcal H)$ with modified eigenvalue sequence $(\tau_n)_{n\in\mathbb N}$, we define 
the $C$-spectrum of $T$ to be
\begin{align*}
P_C(T)=\Big\lbrace \sum\nolimits_{n=1}^\infty \gamma_n\tau_{\sigma(n)} \,\Big|\, \sigma:\mathbb N \to\mathbb N \text{ is permutation}\Big\rbrace\,.
\end{align*}
\end{definition}

A survey regarding the $C$-spectrum of a matrix can be found in \cite[Ch. 6]{article_li_radii}. Now note that compact normal operators have a spectral decomposition of the form
\begin{align*}
T = \sum_{n=1}^\infty \tau_n \langle f_n, \cdot \rangle f_n
\end{align*}
where $(f_n)_{n \in \mathbb N}$ is an orthonormal basis of $\mathcal H$ and $(\tau_n)_{n \in \mathbb N}$
denotes the modified eigenvalue sequence of $T$, cf.~\cite[Thm. VIII.§4.6]{berberian1976}. 
If an operator is normal but not compact, it still allows a spectral decomposition but, in general,
the above (finite or infinite) sum has to be replaced by an integral which makes the definition
of its $C$-spectrum quite delicate. Therefore, we will restrict our consideration to the compact case.

\begin{theorem}\label{theorem_4}
Let $C\in\mathcal B^1(\mathcal H)$ and $T\in\mathcal K(\mathcal H)$ be both normal. Then one has
\begin{align*}
P_C(T)\subseteq W_C(T)\subseteq\operatorname{conv}(\overline{P_C(T)})\,.
\end{align*}
\end{theorem}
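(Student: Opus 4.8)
The plan is to establish the two inclusions separately, the first by an explicit construction of unitaries and the second by approximating $\overline{W_C(T)}$ with finite-dimensional $C$-numerical ranges as in Theorem~\ref{lemma_2}. For $P_C(T)\subseteq W_C(T)$ I would argue directly: since $C$ and $T$ are compact and normal, the spectral theorem (see the discussion following Definition~\ref{defi_3}) yields orthonormal bases $(e_n)_{n\in\mathbb N}$ and $(f_n)_{n\in\mathbb N}$ of $\mathcal H$ with
\begin{align*}
C=\sum_{n=1}^\infty\gamma_n\langle e_n,\cdot\rangle e_n,\qquad T=\sum_{n=1}^\infty\tau_n\langle f_n,\cdot\rangle f_n,
\end{align*}
where $(\gamma_n)_n$ and $(\tau_n)_n$ may be taken to be modified eigenvalue sequences of $C$ and $T$ (how the zeros are mixed in is immaterial here, cf.~Lemma~\ref{Lemma_5b}). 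Given a permutation $\sigma$ of $\mathbb N$, the reindexed family $(f_{\sigma(n)})_{n\in\mathbb N}$ is again an orthonormal basis, so there is a unitary $U\in\mathcal B(\mathcal H)$ with $Ue_n=f_{\sigma(n)}$ for all $n$. Then $U^\dagger TUe_n=\tau_{\sigma(n)}e_n$, so $U^\dagger TU$ and $C$ are both diagonal in $(e_n)_n$, whence $CU^\dagger TU=\sum_n\gamma_n\tau_{\sigma(n)}\langle e_n,\cdot\rangle e_n$, and taking the trace in the basis $(e_n)_n$ gives $\operatorname{tr}(CU^\dagger TU)=\sum_n\gamma_n\tau_{\sigma(n)}$, the series converging absolutely because $\sum_n|\gamma_n|=\nu_1(C)<\infty$ and $|\tau_{\sigma(n)}|\le\|T\|$. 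Hence every element of $P_C(T)$ lies in $W_C(T)$.

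For $W_C(T)\subseteq\operatorname{conv}(\overline{P_C(T)})$ I would first note that $P_C(T)$ is bounded by $\nu_1(C)\|T\|$, so $\overline{P_C(T)}$ is compact and $K:=\operatorname{conv}(\overline{P_C(T)})$ is a compact (in particular closed) convex subset of $\mathbb C$; it therefore suffices to prove $\overline{W_C(T)}\subseteq K$. Working with the spectral bases $(e_n)_n$ of $C$ and $(g_n)_n$ of $T$ from above, the truncations $[C]^e_{2n}=\operatorname{diag}(\gamma_1,\dots,\gamma_{2n})$ and $[T]^g_{2n}=\operatorname{diag}(\tau_1,\dots,\tau_{2n})$ are normal matrices, so the classical matrix result for pairs of normal operators (cf.~\cite{article_marcus,article_sunder}) gives
\begin{align*}
W_{[C]^e_{2n}}([T]^g_{2n})\subseteq\operatorname{conv}(P_n),\qquad P_n:=\Big\{\sum\nolimits_{j=1}^{2n}\gamma_j\tau_{\sigma(j)}\,\Big|\,\sigma\text{ permutation of }\{1,\dots,2n\}\Big\}.
\end{align*}
The crucial point is that $P_n$ lies uniformly close to $P_C(T)$: extending $\sigma$ by the identity on $\{2n+1,2n+2,\dots\}$ shows that any point of $P_n$ differs from an element of $P_C(T)\subseteq K$ by the tail $\sum_{j>2n}\gamma_j\tau_j$, whose modulus is at most $\delta_n:=\|T\|\sum_{j>2n}|\gamma_j|$. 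Since $K$, hence its closed $\delta_n$-neighbourhood $K_{\delta_n}:=\{z\in\mathbb C\mid d(z,K)\le\delta_n\}$, is convex, it follows that $\operatorname{conv}(P_n)$ and therefore $W_{[C]^e_{2n}}([T]^g_{2n})$ is contained in $K_{\delta_n}$. As $\delta_n\to0$, the compact sets $K_{\delta_n}$ converge to $K$ in the Hausdorff metric, while Theorem~\ref{lemma_2} gives $W_{[C]^e_{2n}}([T]^g_{2n})\to\overline{W_C(T)}$; since $W_{[C]^e_{2n}}([T]^g_{2n})\subseteq K_{\delta_n}$ for every $n$, Lemma~\ref{lemma_5}(a) yields $\overline{W_C(T)}\subseteq K$, as desired.

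The main obstacle is the second inclusion. In finite dimensions it rests on Birkhoff's theorem — the doubly stochastic matrix formed by the squared moduli of a unitary's entries is a convex combination of permutation matrices — and this fails for infinite doubly stochastic matrices, so no direct argument is available and one is forced to pass through finite truncations via Theorem~\ref{lemma_2}. The technical heart is then the uniform tail estimate $\delta_n\to0$ that lets the finite $C$-spectra $P_n$ be replaced by $P_C(T)$ in the limit; this is exactly where the summability of the eigenvalues of $C$, i.e.~$C\in\mathcal B^1(\mathcal H)$, enters. A minor but necessary bookkeeping point is that $C$ and $T$ must be truncated with respect to \emph{different} orthonormal bases (their respective spectral bases) so that both truncations are normal matrices — which is legitimate because Theorem~\ref{lemma_2} permits independent bases for $C$ and $T$.
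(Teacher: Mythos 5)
Your proof is correct, and for the first inclusion it is essentially identical to the paper's: the permutation unitary sending $e_n$ to $f_{\sigma(n)}$ turns $\operatorname{tr}(CU^\dagger TU)$ into $\sum_n\gamma_n\tau_{\sigma(n)}$, with absolute convergence from $\sum_n|\gamma_n|=\nu_1(C)$. For the second inclusion the overall architecture also matches the paper -- truncate $C$ and $T$ in their respective spectral bases, apply the finite-dimensional result $W_{[C]^e_{2n}}([T]^g_{2n})\subseteq\operatorname{conv}(P_{[C]^e_{2n}}([T]^g_{2n}))$ for normal matrices, and pass to the limit using Theorem \ref{lemma_2} and Lemma \ref{lemma_5}(a) -- but you treat the limit of the convex hulls differently. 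The paper first proves the full two-sided Hausdorff convergence $P_{[C]^e_n}([T]^g_n)\to\overline{P_C(T)}$ (Theorem \ref{lemma_6}, whose harder half rests on Lemma \ref{lemma_6b}) and then commutes $\operatorname{conv}$ with the limit via Lemma \ref{lemma_5}(b). You use only the easy one-sided tail estimate $\bigl|\sum_{j>2n}\gamma_j\tau_j\bigr|\le\Vert T\Vert\sum_{j>2n}|\gamma_j|=\delta_n$ to place each finite $C$-spectrum, hence its convex hull, inside the convex compact $\delta_n$-neighbourhood of $K=\operatorname{conv}(\overline{P_C(T)})$, and let $\delta_n\to0$. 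This is a genuine (if modest) economization: it delivers the inclusion without establishing the convergence of the finite $C$-spectra to $\overline{P_C(T)}$, at the price of not producing that convergence statement, which is a result of independent interest in the paper.
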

\begin{proof}[Proof of Theorem \ref{theorem_4} -- first inclusion]
Let $(e_n)_{n\in\mathbb N}$ and $(f_n)_{n\in\mathbb N}$ be orthonormal bases of $\mathcal H$
such that $C$ and $T$ can be represented as
\begin{align*}
C = \sum_{n=1}^\infty \gamma_n \langle e_n, \cdot \rangle e_n
\quad\text{and}\quad
T = \sum_{n=1}^\infty \tau_n \langle f_n, \cdot \rangle f_n
\end{align*}
where $(\gamma_n)_{n\in\mathbb N}$ and $(\tau_n)_{n\in\mathbb N}$ are the modified eigenvalue sequence
of $C$ and $T$, respectively. Now let $\sigma:\mathbb N\to\mathbb N$ be any permutation and define
the operator
\begin{align*}
U_\sigma:=\sum_{n=1}^\infty\langle e_n,\cdot\rangle f_{\sigma(n)} \in \mathcal B(\mathcal H)\,.
\end{align*}
Obviously, $U_\sigma$ is unitary by the Fourier expansion and yields the following equality:
\begin{align*}
\operatorname{tr}(CU_\sigma^\dagger TU_\sigma) = \sum_{n=1}^\infty \langle e_n, CU_\sigma^\dagger TU_\sigma e_n\rangle
= \sum_{n=1}^\infty\gamma_n\langle f_{\sigma(n)},T f_{\sigma(n)} \rangle
=\sum_{n=1}^\infty\gamma_n\tau_{\sigma(n)}
\end{align*}
The fact that $\sigma$ was chosen arbitrarily shows the first inclusion.
\end{proof}
\noindent The second inclusion we will prove later as for that, we need some more knowledge of the $C$-spectrum of normal operators.\medskip

For matrices $A,C\in\mathbb C^{n\times n}$, it is well known that the first inclusion
\begin{align*}
P_C(A)\subseteq W_C(A)
\end{align*}
of Theorem \ref{theorem_4} holds even if only $A$ or $C$  is normal \cite[Eq.(4)]{article_marcus}. 
This can be easily seen via Schur's triangularization theorem \cite[Thm. 2.3.1]{Horn_Johnson}. 
In order to generalize this result to operators on i.s.c. Hilbert spaces we recall the following terminology, cf. \cite[Ch. 2]{herrero_triangular}.

\begin{definition}
\begin{itemize}
\item[(a)] An operator $T\in\mathcal B(\mathcal H)$ is called \textit{upper triangular} with respect
to $(e_n)_{n\in\mathbb N}$ if there exists an orthonormal basis $(e_n)_{n\in\mathbb N}$ of $\mathcal H$ such 
that $\langle e_j,Te_k\rangle=0$ for all $j,k\in\mathbb N$ with $j>k$.\vspace{4pt}
\item[(b)] Analogously, $T\in\mathcal B(\mathcal H)$ is \textit{lower triangular} with respect to the
orthonormal basis $(e_n)_{n\in\mathbb N}$ if $\langle e_j,Te_k\rangle=0$ for all $j,k\in\mathbb N$ with $j<k$. 
\end{itemize}
\end{definition}

\begin{theorem}\label{theorem_8}
Let $C\in\mathcal B^1(\mathcal H)$ and $T\in\mathcal K(\mathcal H)$ and assume that one of them
is normal and the other one is upper or lower triangular. Then $\overline{P_C(T)}\subseteq \overline{W_C(T)}$.
\end{theorem}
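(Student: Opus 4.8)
The plan is to reduce the triangular case to a computation that mimics the finite-dimensional Schur-triangularization argument, then pass to the limit. Suppose first that $C = \sum_n \gamma_n \langle e_n, \cdot\rangle e_n$ is normal (compact, since trace class) with the $(e_n)$ forming an orthonormal basis, and that $T$ is upper triangular with respect to some orthonormal basis $(h_n)_{n\in\mathbb N}$, say $T h_k = \sum_{j\le k}\langle h_j, Th_k\rangle h_j$. Fix a permutation $\sigma:\mathbb N\to\mathbb N$; I want to show $\sum_n \gamma_n \tau_{\sigma(n)} \in \overline{W_C(T)}$. The diagonal entries $\langle h_k, Th_k\rangle$ of an upper-triangular compact operator are precisely its eigenvalues listed with algebraic multiplicity (this is the infinite-dimensional analogue of reading eigenvalues off the diagonal of a triangular matrix, and holds because the finite truncations $[T]_k^h$ are genuinely triangular matrices whose eigenvalues are the diagonal entries, and these exhaust the nonzero spectrum of $T$ as $k\to\infty$); so after relabelling the basis $(h_n)$ we may assume $\langle h_n, Th_n\rangle = \tau_{\sigma(n)}$ for all $n$, with $\tau$ the modified eigenvalue sequence of $T$. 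One subtlety: the \emph{modified} sequence may include extra zeros coming from an infinite-dimensional kernel; I would handle this exactly as in the footnote to Definition \ref{defi_3}, by inserting basis vectors from $\ker T$ into $(h_n)$ in the appropriate positions, which does not disturb upper-triangularity since $Th_n = 0$ for those $n$.

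Next, for each $N\in\mathbb N$ define the unitary $U_N := \sum_n \langle e_n, \cdot\rangle h_{\pi_N(n)}$ where $\pi_N$ is the permutation that, informally, only reshuffles the first block and acts as the identity far out; more precisely I would take $U_N$ to map $e_n \mapsto h_n$ and then, \emph{inside the span of the first $N$ vectors}, conjugate by the finite unitary that Schur-triangularizes $[T]_N^h$-style the ordering so that the partial sums $\sum_{n=1}^N \gamma_n \langle h_n, T h_n\rangle$ appear. The cleanest route: by part of the matrix theory cited after Theorem \ref{theorem_4} (\cite[Eq.(4)]{article_marcus} together with \cite[Thm. 2.3.1]{Horn_Johnson}), for the finite matrices $[C]^e_{N}$ and $[T]^h_N$ — where $[C]^e_N$ is normal (diagonal) and $[T]^h_N$ is upper triangular, hence $P_{[C]^e_N}([T]^h_N)\subseteq W_{[C]^e_N}([T]^h_N)$ — the point $\sum_{n=1}^{N}\gamma_n\tau_{\sigma(n)}$ (reading off the relevant diagonal entries) lies in $W_{[C]^e_N}([T]^h_N)$. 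By Theorem \ref{lemma_2}, $W_{[C]^e_{2N}}([T]^h_{2N}) \to \overline{W_C(T)}$ in the Hausdorff metric, so by Lemma \ref{lemma_5}(a) (or directly Lemma \ref{lemma_11}) any convergent sequence of points $v_N \in W_{[C]^e_{2N}}([T]^h_{2N})$ has its limit in $\overline{W_C(T)}$. Since $\sum_{n=1}^{2N}\gamma_n\langle h_n, Th_n\rangle \to \sum_n \gamma_n\tau_{\sigma(n)}$ as $N\to\infty$ (the tail is bounded by $\sum_{n>2N}|\gamma_n|\,\|T\| \to 0$ because $C$ is trace class), I conclude $\sum_n\gamma_n\tau_{\sigma(n)}\in\overline{W_C(T)}$. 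Taking the union over all permutations $\sigma$ gives $P_C(T)\subseteq\overline{W_C(T)}$, and since $\overline{W_C(T)}$ is closed, $\overline{P_C(T)}\subseteq\overline{W_C(T)}$.

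For the remaining cases — $C$ normal and $T$ lower triangular, or $C$ triangular and $T$ normal — I would reduce to the one just done. If $T$ is lower triangular with respect to $(h_n)$, pass to the reversed-order basis on each finite block, or equivalently note $T^\dagger$-type symmetry is not available here but the finite-dimensional input \cite[Eq.(4)]{article_marcus} is symmetric in ``upper'' vs. ``lower'', so the same truncation argument runs verbatim with $[T]^h_N$ lower triangular. If instead $C$ is upper (or lower) triangular with respect to $(e_n)$ and $T$ is normal, then the truncations $[C]^e_N$ are triangular matrices and $[T]^h_N$ can be taken diagonal (choosing $(h_n)$ to be an eigenbasis of $T$), again landing in the matrix result $P_{[C]^e_N}([T]^h_N)\subseteq W_{[C]^e_N}([T]^h_N)$, and the same limiting procedure applies — here the convergence of the partial sums to $\sum_n\gamma_n\tau_{\sigma(n)}$ still holds since the modified eigenvalue sequence $(\gamma_n)$ of the trace-class operator $C$ is absolutely summable.

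The main obstacle I anticipate is the bookkeeping around the \emph{modified} eigenvalue sequences and the permutation $\sigma$: I need to guarantee that for every fixed $\sigma$ there is a single orthonormal basis $(h_n)$ (resp. a single diagonal arrangement) such that \emph{all} the finite truncations simultaneously exhibit the partial sums $\sum_{n=1}^N\gamma_n\tau_{\sigma(n)}$ as elements of the finite $C$-numerical ranges. Getting the ordering of the (triangular operator's) diagonal entries to match $\tau_{\sigma(\cdot)}$ coherently across all $N$, while respecting triangularity and correctly accounting for zeros from an infinite-dimensional kernel, is the delicate point; everything else is a routine Hausdorff-limit argument built on Theorem \ref{lemma_2} and Lemma \ref{lemma_5}.
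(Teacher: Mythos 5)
Your overall strategy---truncate, use the finite-dimensional inclusion $P_{[C]_N}([T]_N)\subseteq W_{[C]_N}([T]_N)$, and pass to the Hausdorff limit via Theorem \ref{lemma_2}---is a legitimate alternative to what the paper does, but as written it has a genuine gap at exactly the point you flag as ``delicate,'' and that point is not mere bookkeeping. The step ``after relabelling the basis $(h_n)$ we may assume $\langle h_n,Th_n\rangle=\tau_{\sigma(n)}$'' is self-defeating: permuting an orthonormal basis by an arbitrary $\sigma$ destroys upper-triangularity, so after the relabelling you can no longer assert that the eigenvalues of $[T]^h_{2N}$ are its diagonal entries, and hence you cannot place $\sum_{n=1}^{2N}\gamma_n\tau_{\sigma(n)}$ in $P_{[C]^e_{2N}}([T]^h_{2N})$. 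If instead you keep the triangular ordering, then $P_{[C]^e_{2N}}([T]^h_{2N})$ only contains sums $\sum_{n=1}^{2N}\gamma_n d_{\pi(n)}$ over the \emph{first $2N$ diagonal entries} $d_j=\langle h_j,Th_j\rangle$, and for a fixed infinite permutation $\sigma$ the multiset $\{\tau_{\sigma(1)},\dots,\tau_{\sigma(2N)}\}$ need not be a sub-multiset of $\{d_1,\dots,d_{2N}\}$: $\sigma$ may reach far beyond $2N$, the diagonal entries may list the eigenvalues in a completely different order, and the \emph{modified} eigenvalue sequence contains inserted zeros that need not occur among $d_1,\dots,d_{2N}$ at all. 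Repairing this requires (i) a proof that the diagonal entries of a compact triangular operator are its nonzero eigenvalues with algebraic multiplicities (your one-line justification via truncations is not a proof; this is the paper's Theorem \ref{lemma_12}, whose proof needs a block decomposition and the estimate $\Vert C\Vert<|\lambda|$ on the tail block), (ii) an argument that padding or deleting zeros and rearranging does not change the closure of the set of sums $\sum_n\gamma_n\tau_{\sigma(n)}$ (the paper's Lemma \ref{Lemma_5b}), and (iii) a truncation of $\sigma$ to finite permutations compatible with initial segments (as in Lemma \ref{lemma_6b}). None of these is carried out in your proposal.

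For comparison, the paper avoids truncation entirely here: assuming w.l.o.g.\ that $T$ is diagonal and $C$ is upper triangular with respect to the same basis $(e_n)_{n\in\mathbb N}$, it uses the permutation unitaries $U_\sigma=\sum_n\langle e_n,\cdot\rangle e_{\sigma(n)}$ from the first inclusion of Theorem \ref{theorem_4} and computes directly that $\operatorname{tr}(CU_\sigma^\dagger TU_\sigma)=\sum_n\langle e_n,Ce_n\rangle\,\tau_{\sigma(n)}$, since the trace of (triangular)$\cdot$(diagonal) sees only the diagonal of the triangular factor. Theorem \ref{lemma_12} identifies these diagonal entries with the modified eigenvalue sequence of $C$ up to zeros and reordering, and Lemma \ref{Lemma_5b} then shows the closure of the resulting set of sums is exactly $\overline{P_C(T)}$. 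This gives each element of a dense subset of $\overline{P_C(T)}$ as an honest point of $W_C(T)$ in one step, whereas your route still needs items (i)--(iii) above before the limit argument can be run.
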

\begin{proof}
We assume w.l.o.g.~that $T$ is normal and $C$ is upper triangular with respect to the same orthonormal
basis $(e_n)_{n\in\mathbb N}$ which also diagonalizes $T$. Then Theorem
\ref{lemma_12} (see Appendix \ref{appendix_c}) guarantees that there exists a one-to-one correspondence
between the non-zero ``diagonal entries'' $\langle e_n,Ce_n\rangle$ of $C$ and the non-zero elements of 
the modified eigenvalue sequence $(\gamma_j)_{j \in \mathbb N}$ of $C$. Moreover, the non-vanishing singular
values of any compact normal operator are given by the absolute values of its non-zero eigenvalues, 
which guarantees $\sum_{j=1}^\infty |\gamma_j|=\nu_1(C)<\infty$. In addition, the modified eigenvalue
sequence of any compact operator converges to zero and therefore Lemma \ref{Lemma_5b} below
guarantees that one can proceed as in the proof of Theorem \ref{theorem_4} -- first inclusion.
\end{proof}

\begin{lemma}\label{Lemma_5b}
Let $\sigma: \mathbb N \to \mathbb N$ be a permutation and let $(a_n)_{n\in\mathbb N}$, $(b_n)_{n\in\mathbb N}$
be sequences of complex numbers such that $\sum_{j=1}^\infty |a_j|<\infty$ and $(b_n)_{n\in\mathbb N}$ converges
to zero. Moreover, let $(a'_n)_{n\in\mathbb N}$, $(b'_n)_{n\in\mathbb N}$ be sequences of complex numbers
which differ from $(a_n)_{n\in\mathbb N}$, $(b_n)_{n\in\mathbb N}$ only by a finite or infinite number of zeros.
More presicely, for each $\alpha\neq 0$ one has
\begin{align}\label{eq:Lemma_5b_1}
|\{k \in \mathbb N \,|\, a_k = \alpha\}| = |\{k \in \mathbb N \,|\, a'_k = \alpha\}|
\end{align}
and similarly for $(b_n)_{n\in\mathbb N}$ and $(b'_n)_{n\in\mathbb N}$. Then the closures of the following
two sets coincide:
\begin{align*}
A&:=\Big\lbrace \sum\nolimits_{n=1}^\infty a_nb_{\sigma(n)} \,\Big|\, \sigma:\mathbb N \to\mathbb N \text{ is permutation}\Big\rbrace\\
and \quad\quad\quad& \\
A'&:=\Big\lbrace \sum\nolimits_{n=1}^\infty a'_nb'_{\sigma(n)} \,\Big|\, \sigma:\mathbb N \to\mathbb N \text{ is permutation}\Big\rbrace
\end{align*}
\end{lemma}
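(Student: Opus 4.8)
The plan is to show both inclusions $\overline{A}\subseteq\overline{A'}$ and $\overline{A'}\subseteq\overline{A}$ by approximating, for a given permutation $\sigma$ of $\mathbb{N}$ in the index set of $A$ (resp.\ $A'$), the value $\sum_n a_n b_{\sigma(n)}$ arbitrarily well by a value coming from a suitable permutation in the other set. The crucial structural observation is that inserting or deleting zeros in $(a_n)$ or in $(b_n)$ does not change which products $a_nb_{\sigma(n)}$ can be nonzero: a term contributes nonzero precisely when the (nonzero) value $a_n$ is paired with the (nonzero) value $b_{\sigma(n)}$, and by the multiset condition \eqref{eq:Lemma_5b_1} the multisets of nonzero values of $(a_n)$ and $(a'_n)$ (resp.\ of $(b_n)$ and $(b'_n)$) agree. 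So I would first reduce the problem to a statement purely about pairings of the nonzero values: let $(\alpha_k)_k$ enumerate the nonzero $a$-values (with multiplicity) and $(\beta_k)_k$ the nonzero $b$-values, and observe that every element of $A$ (and of $A'$) has the form $\sum_{k} \alpha_k \beta_{\pi(k)}$ for some \emph{injection} $\pi$ from the index set of nonzero $\alpha$'s into the index set of nonzero $\beta$'s, where unmatched $\alpha$'s contribute $0$; conversely, since $\sum_k|\alpha_k|<\infty$ and the $\beta$'s form a null sequence together with finitely many or cofinitely many indices available, every such injection is realized by an honest permutation $\sigma$ on all of $\mathbb{N}$ (fill in the remaining index slots arbitrarily among the zero-positions, which is possible because both sequences have been padded with the \emph{same cardinality} of zeros in the relevant regime; the mixing procedure of the modified eigenvalue sequence was deliberately left unspecified for exactly this reason).

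Given this reduction, the heart of the argument is: for any injection $\pi$ used to define a point of $A$, and any $\varepsilon>0$, I want an injection $\pi'$ admissible for $A'$ with $\bigl|\sum_k\alpha_k\beta_{\pi(k)}-\sum_k\alpha_k\beta_{\pi'(k)}\bigr|<\varepsilon$, and symmetrically. But by the reduction both $A$ and $A'$ are, up to this identification, \emph{the same set} of pairing-sums — the only subtlety is whether a given injection $\pi$ on the nonzero indices extends to a genuine permutation of $\mathbb N$ in each of the two padded sequences. The key step is therefore to check that the number of ``free'' (zero-)slots available on each side is large enough: if one of $(a_n),(b_n)$ has finite-dimensional range-type behavior (finitely many nonzeros) the extension is trivial; if both have infinitely many zero slots, any injection on the nonzero indices extends. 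I would handle the finitely-many-zeros-added versus infinitely-many-zeros-added cases separately, but in each the extension exists, so in fact $A=A'$ as sets of sums already — actually this is too strong, because a permutation must be a bijection of $\mathbb N$, and if, say, $(a'_n)$ has \emph{fewer} zero slots than needed to absorb the unmatched $\beta$-indices, a particular $\pi$ may not extend. This is where I would use the closure: approximate $\pi$ by a finitely-supported modification $\pi_N$ (agreeing with $\pi$ except on finitely many indices) which \emph{does} extend, using that $\sum_{k>N}|\alpha_k|\to 0$ and $\beta_k\to 0$ so that the tail contribution is $O\bigl(\varepsilon\bigr)$ regardless of how the tail is paired.

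The main obstacle I anticipate is precisely the bookkeeping of which injections extend to bijections of $\mathbb N$ on each side, i.e.\ making rigorous the claim that the mismatch in the number of inserted zeros can always be absorbed in the tail where the terms are negligible in trace-norm. Concretely, the estimate I would establish is: if $\pi$ and $\pi'$ agree on $\{1,\dots,N\}$ (among the nonzero-$\alpha$ indices), then
\begin{align*}
\Bigl|\sum_{k=1}^\infty\alpha_k\beta_{\pi(k)}-\sum_{k=1}^\infty\alpha_k\beta_{\pi'(k)}\Bigr|
\;\leq\; 2\,\Bigl(\sup_k|\beta_k|\Bigr)\sum_{k=N+1}^\infty|\alpha_k|\;\xrightarrow{N\to\infty}\;0\,,
\end{align*}
so it suffices to realize, for each $N$, \emph{some} admissible (on the $A'$ side) injection agreeing with $\pi$ up to $N$ — and such a finite partial matching always extends because only finitely many nonzero $\beta$-values have been committed and infinitely many (or exactly enough) free slots remain. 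Running the symmetric argument with the roles of $A$ and $A'$ swapped, and invoking Lemma \ref{lemma_11} if one prefers to phrase the conclusion via the Hausdorff metric, gives $\overline{A}=\overline{A'}$.
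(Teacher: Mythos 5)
Your strategy is at heart the same as the paper's: use the multiset condition \eqref{eq:Lemma_5b_1} to match the finitely many ``significant'' nonzero values exactly, and control everything else by tail estimates coming from $\sum_j|a_j|<\infty$ and $b_n\to 0$. The organizational difference is that you treat both replacements at once through a single ``pairing of nonzero values'' abstraction, whereas the paper runs a chain $\overline A=\overline{A}_1$, $\overline A=\overline{A}_2$, $\overline{A}_2=\overline{A'}$, replacing one sequence at a time and using absolute convergence to shift the permutation from the $b$'s onto the $a$'s in the second step. Your version is conceptually cleaner but pays for it with exactly the bookkeeping you flag: which partial injections of nonzero indices extend to bijections of $\mathbb N$ on each side.

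That bookkeeping is where your write-up is not yet tight. Your displayed estimate assumes $\pi$ and $\pi'$ \emph{agree} on $\{1,\dots,N\}$, and you assert that an admissible $\pi'$ on the $A'$ side agreeing with $\pi$ up to $N$ ``always extends''. But exact agreement need not be achievable: if $a_k\neq 0$ for some $k\le N$ and $\pi$ is undefined at $k$ because $b_{\sigma(k)}=0$, while $(b'_n)$ contains no zeros at all, then no admissible $\pi'$ can leave $k$ unmatched, and the resulting discrepancy $|a_k|\,|b'_{\pi'(k)}|$ sits in the \emph{head}, not in the $k>N$ tail your bound $2(\sup_k|\beta_k|)\sum_{k>N}|\alpha_k|$ controls; it is also not ``$O(\varepsilon)$ regardless of how it is paired''. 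The fix — and this is precisely what the paper's construction of $\hat\sigma$ does via the clause ``if $b_{\sigma(k)}=0$ then $\hat\sigma(k)\ge N$'' together with $\max_{n\ge N}\{|b_n|,|b'_n|\}<\varepsilon/(4s)$ — is to send such displaced indices specifically into the far tail of the \emph{other} sequence, where $|b'_n|$ is uniformly small; symmetrically, on the $a$-side one needs $|a'_n|<\varepsilon/(4\kappa N)$ for $n\ge N'$. This contributes an additional error term of the form $s\cdot\max_{n\ge N}|b'_n|$ (resp.\ $\kappa N\cdot\max_{n\ge N'}|a'_n|$) that must be added to your estimate; it is small precisely because $b_n\to0$ (resp.\ because $(a_n)$ is absolutely summable), so these hypotheses are used in an essential way beyond the $k>N$ tail. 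With that estimate supplied, your argument closes.
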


\noindent
For a proof of Lemma \ref{Lemma_5b} we refer to Appendix \ref{appendix_b}.
\begin{lemma}\label{lemma_6b}
Let $C\in\mathcal B^1(\mathcal H)$ and $T\in\mathcal K(\mathcal H)$ be both normal. Then for all $\varepsilon>0$ and $w\in \overline{P_C(T)}$ there exists $N\in\mathbb N$ such that the distance $d(w,P_{[C]^e_n}([T]^g_n))<\varepsilon$ for all $n\geq N$. Here, $[\,\cdot\,]_n^e$ and $[\,\cdot\,]_n^g$ are the maps given by (\ref{cut_out_operator}) with 
respect to the orthonormal bases $(e_n)_{n\in\mathbb N}$ and $(g_n)_{n\in\mathbb N}$ which diagonalize $C$ and
$T$, respectively.
\end{lemma}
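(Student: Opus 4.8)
\textbf{Proof proposal for Lemma \ref{lemma_6b}.}
The plan is to mimic the proof of Theorem \ref{theorem_4} -- first inclusion at the level of the truncated matrices $[C]_n^e$ and $[T]_n^g$, and then use Lemma \ref{Lemma_5b} together with a tail estimate to show that finite partial sums of $\sum_n \gamma_n \tau_{\sigma(n)}$ are well-approximated by elements of $P_{[C]_n^e}([T]_n^g)$. Fix $\varepsilon > 0$ and $w \in \overline{P_C(T)}$. First I would reduce to the case $w \in P_C(T)$: since $P_{[C]_n^e}([T]_n^g)$ is closed (it is the $C$-spectrum of a pair of matrices, hence compact, cf.~\cite[Ch.~6]{article_li_radii}), it suffices to find, for each genuine element $w = \sum_{n=1}^\infty \gamma_n \tau_{\pi(n)}$ of $P_C(T)$ with $\pi$ a permutation of $\mathbb N$, some $N$ with $d(w, P_{[C]_n^e}([T]_n^g)) < \varepsilon$ for all $n \geq N$; an $\varepsilon/2$-argument then handles the closure, exactly as in the passage from Lemma \ref{lemma_3} to Theorem \ref{lemma_2}.

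Next I would set up the tail estimates. Because $C$ and $T$ are compact normal, $\sum_j |\gamma_j| = \nu_1(C) < \infty$ and $\tau_j \to 0$, so in particular $(\tau_j)$ is bounded, say $|\tau_j| \leq M$ for all $j$. Choose $K$ so large that $\sum_{j > K} |\gamma_j| < \varepsilon/(4M)$; then the tail $\big|\sum_{j > K} \gamma_j \tau_{\pi(j)}\big| < \varepsilon/4$, so $w$ is within $\varepsilon/4$ of the finite sum $\sum_{j=1}^K \gamma_j \tau_{\pi(j)}$. The diagonal entries $\langle e_n, C e_n\rangle = \gamma_n$ of $C$ (in its eigenbasis) are exactly the diagonal entries of the matrix $[C]_n^e$, and likewise $\langle g_n, T g_n\rangle = \tau_n$ are the diagonal entries of $[T]_n^g$. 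For $n$ large enough, both $\gamma_1, \dots, \gamma_K$ appear among the first $n$ diagonal entries of $[C]_n^e$ and $\tau_{\pi(1)}, \dots, \tau_{\pi(K)}$ appear among the first $n$ diagonal entries of $[T]_n^g$; moreover, since $\tau_j \to 0$, all diagonal entries of $[T]_n^g$ outside this prescribed finite set can be made small --- more precisely, for $n$ large the remaining $\tau$-values available in $[T]_n^g$ are among $\{\tau_j : j > J\}$ for $J$ as large as we like, except for a bounded number of "early" indices not used by $\pi$ on $\{1,\dots,K\}$, which we can also pin down by choosing $K$ slightly larger. The point is that $P_{[C]_n^e}([T]_n^g)$ contains $\sum_{k} \gamma_{i_k} \tau_{j_k}$ for \emph{any} pairing of the $n$ diagonal entries of $[C]_n^e$ with those of $[T]_n^g$ via a permutation of $\{1,\dots,n\}$ --- this is the matrix analogue of Theorem \ref{theorem_4}, first inclusion, valid since $[C]_n^e$ and $[T]_n^g$ are diagonal. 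Hence I can build an element of $P_{[C]_n^e}([T]_n^g)$ by pairing $\gamma_j$ with $\tau_{\pi(j)}$ for $j = 1, \dots, K$ and pairing the remaining diagonal entries arbitrarily; the contribution of those remaining pairs is bounded by $\big(\sum_{j > K}|\gamma_j|\big) \cdot (\text{bound on remaining } \tau\text{'s}) < \varepsilon/4 \cdot 1$ once the remaining $\tau$-entries are forced to be small, and we get an element of $P_{[C]_n^e}([T]_n^g)$ within $\varepsilon/2$ of $\sum_{j=1}^K \gamma_j \tau_{\pi(j)}$, hence within $3\varepsilon/4 < \varepsilon$ of $w$.

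The main obstacle, and where I would invoke Lemma \ref{Lemma_5b}, is bookkeeping the zeros: the modified eigenvalue sequences $(\gamma_n)$, $(\tau_n)$ and the finite "diagonal sequences" of $[C]_n^e$, $[T]_n^g$ differ by various numbers of zero entries (the truncation introduces spurious zeros, and the modified sequences already contain injected zeros when the kernels are infinite-dimensional). Lemma \ref{Lemma_5b} is precisely the tool that says inserting or deleting zeros does not change the closure of the set of rearranged sums, so it licenses passing freely between $w = \sum \gamma_n \tau_{\pi(n)}$ (in terms of the modified sequences) and an approximant built from the genuine diagonal entries of the finite truncations; I would apply it once at the level of $\overline{P_C(T)}$ and keep the finite-$n$ estimates purely combinatorial. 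Finally, assembling the bounds and taking $N$ to be the threshold beyond which all of the above "for $n$ large enough" conditions hold simultaneously gives $d(w, P_{[C]_n^e}([T]_n^g)) < \varepsilon$ for all $n \geq N$, which is the claim.
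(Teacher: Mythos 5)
Your proposal is correct and follows essentially the same route as the paper: approximate $w$ by a genuine rearranged sum, cut the tail using $\sum_{j>K}|\gamma_j|<\varepsilon/(4\Vert T\Vert)$, extend the partial matching $j\mapsto\pi(j)$, $j\le K$, to a permutation of $\{1,\dots,n\}$ once $n\ge\max_{j\le K}\pi(j)$, and bound the leftover pairs by $\Vert T\Vert\sum_{j>K}|\gamma_j|$. The only cosmetic difference is your appeal to Lemma \ref{Lemma_5b} for zero-bookkeeping, which the paper does not need here (the diagonal entries of $[C]_n^e$ and $[T]_n^g$ in the chosen eigenbases are exactly $\gamma_1,\dots,\gamma_n$ and $\tau_1,\dots,\tau_n$), and your remark that the remaining $\tau$-entries must be "forced to be small" is likewise unnecessary, since the $\gamma$-tail bound alone suffices.
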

\begin{proof}
Let $\varepsilon>0$ and $w\in\overline{P_C(T)}$ be given. There exists a permutation 
$\sigma: \mathbb N \to \mathbb N$ with
\begin{align*}
\Big| w-\sum_{j=1}^\infty \gamma_j \tau_{\sigma(j)} \Big|<\frac{\varepsilon}{2}\,.
\end{align*}
Furthermore, there exists $N' \in\mathbb N$
such that
\begin{align*}
\sum_{j=N'+1}^\infty |\gamma_j| < \frac{\varepsilon}{4 \|T\|}\,.
\end{align*}
Here we used the fact that the non-vanishing singular values of a compact normal operator coincide
with the absolute values of its non-zero eigenvalues. This guarantees
$\sum_{j=1}^\infty |\gamma_j| \, |\tau_{\sigma(j)}| \leq \nu_1(C)\Vert T\Vert<\infty$. Next, we define
\begin{align*}
N := \max_{1 \leq j \leq N'} \sigma(j)\,.
\end{align*}
Note $N \geq N'$. Hence we can choose a permuation $\sigma':\mathbb N \to \mathbb N$ such that
$\sigma'$ restricted to $\{1,\dots,N'\}$ coincides with $\sigma$ and $\sigma'(j) := j$ for $j > N$.
Then $w_n := \sum_{j=1}^n \gamma_j\tau_{\sigma'(j)}$ belongs to $P_{[C]^e_n}([T]^g_n)$ for all $n \geq N$ as $\lbrace\sigma'(1),\ldots,\sigma'(n)\rbrace=\lbrace1,\ldots,n\rbrace$
and we obtain
\begin{align*}
| w-w_n| & \leq \Big| w- \sum_{j=1}^\infty \gamma_j \tau_{\sigma(j)} \Big| + 
\Big| \sum_{j=1}^\infty \gamma_j \tau_{\sigma(j)} - w_n\Big|\\
&< \frac{\varepsilon}{2} + \sum_{j=N'+1}^\infty |\gamma_j| \, |\tau_{\sigma(j)}|
+ \sum_{j=N'+1}^n |\gamma_j| \, |\tau_{\sigma'(j)}|
<\frac{\varepsilon}{2}+\frac{\varepsilon}{4}+\frac{\varepsilon}{4}=\varepsilon
\end{align*}
for all $n\geq N$. 
\end{proof}

\noindent Note that in the above proof, $N$ depends usually on $\varepsilon$ but also on the chosen 
point $w \in\overline{P_C(T)}$.

\begin{theorem}\label{lemma_6}
Let $C\in\mathcal B^1(\mathcal H)$ and $T\in\mathcal K(\mathcal H)$ be both normal. Then
\begin{align*}
\lim_{n\to\infty}P_{[C]^e_n}([T]^g_n)= \overline{P_C(T)}\,.
\end{align*}
Here, $[\,\cdot\,]_n^e$ and $[\,\cdot\,]_n^g$ are the maps given by (\ref{cut_out_operator}) with 
respect to the orthonormal bases $(e_n)_{n\in\mathbb N}$ and $(g_n)_{n\in\mathbb N}$ which diagonalize $C$ and
$T$, respectively.
\end{theorem}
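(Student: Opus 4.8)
The plan is to establish convergence in the Hausdorff metric by verifying the two inclusion-type conditions of Lemma \ref{lemma_11}, exactly paralleling the structure of the proof of Theorem \ref{lemma_2}. First I note that all sets in play are non-empty and compact: $\overline{P_C(T)}$ is compact since $P_C(T)$ is a bounded subset of $\mathbb C$ (bounded by $\nu_1(C)\Vert T\Vert$ via $|\sum_n \gamma_n\tau_{\sigma(n)}| \leq \sum_n |\gamma_n|\,|\tau_{\sigma(n)}| \leq \nu_1(C)\Vert T\Vert$, using that the singular values of a compact normal operator are the moduli of its eigenvalues), and $P_{[C]^e_n}([T]^g_n)$ is compact as the $C$-spectrum of a pair of matrices is a finite union of continuous images of the (compact) symmetric group action, hence compact.

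For the first direction I want: for every $\varepsilon > 0$ there is $N$ such that every $w \in \overline{P_C(T)}$ has a point of $P_{[C]^e_n}([T]^g_n)$ within $\varepsilon$ for all $n \geq N$. Lemma \ref{lemma_6b} gives this pointwise in $w$, so I would upgrade it to a uniform $N$ by the standard compactness argument used in Theorem \ref{lemma_2}: cover $\overline{P_C(T)}$ by finitely many balls $B_{\varepsilon/2}(w_1),\dots,B_{\varepsilon/2}(w_L)$, apply Lemma \ref{lemma_6b} to each $w_k$ to get $N_k$, and set $N := \max_k N_k$; then for arbitrary $w$ the triangle inequality $d(w, P_{[C]^e_n}([T]^g_n)) \leq |w - w_k| + d(w_k, P_{[C]^e_n}([T]^g_n)) < \varepsilon$ does the job.

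For the converse direction I must show every $v_n \in P_{[C]^e_n}([T]^g_n)$ lies within $\varepsilon$ of $\overline{P_C(T)}$ for $n$ large. Such a $v_n$ has the form $\sum_{j=1}^n \langle e_j, Ce_j\rangle\, [T]^g_n\text{-eigenvalue}_{\sigma_n(j)}$ for some $n\times n$ permutation $\sigma_n$; but here lies the subtlety — the diagonal entries $\langle e_j, Ce_j\rangle$ of the truncation $[C]^e_n$ are not literally the eigenvalues $\gamma_j$ of $C$ (truncation does not commute with diagonalization in general), and likewise the eigenvalues of $[T]^g_n$ are not the $\tau_j$. Since I am free to choose the bases, I would pick $(e_n)_{n\in\mathbb N}$ to diagonalize $C$ and $(g_n)_{n\in\mathbb N}$ to diagonalize $T$, so that $[C]^e_n = \operatorname{diag}(\gamma_1,\dots,\gamma_n)$ and $[T]^g_n = \operatorname{diag}(\tau_1,\dots,\tau_n)$ genuinely; then $v_n = \sum_{j=1}^n \gamma_j \tau_{\sigma_n(j)}$. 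Extending $\sigma_n$ to a permutation $\sigma_n'$ of $\mathbb N$ by fixing all indices $> n$ yields a candidate $\sum_{j=1}^\infty \gamma_j \tau_{\sigma_n'(j)} \in P_C(T)$, and the tail estimate $|\sum_{j > n} \gamma_j \tau_{\sigma_n'(j)}| \leq \Vert T\Vert \sum_{j > n}|\gamma_j|$ is $< \varepsilon$ uniformly in $\sigma_n$ once $n$ is large, since $\sum_j |\gamma_j| = \nu_1(C) < \infty$. This gives $d(v_n, P_C(T)) < \varepsilon$, hence $d(v_n, \overline{P_C(T)}) < \varepsilon$.

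The main obstacle I anticipate is precisely the bookkeeping around the \emph{modified} eigenvalue sequences and the interplay between truncation and diagonalization: one must be careful that $[C]^e_n$ really is diagonal with the correct (modified, i.e.\ possibly zero-padded) eigenvalues along the diagonal, and that the finite-dimensional $C$-spectrum $P_{[C]^e_n}([T]^g_n)$ — defined over $n\times n$ permutations acting on all $n$ diagonal entries including any zeros — matches up with the truncated permutations $\sigma_n'$; Lemma \ref{Lemma_5b} is exactly the tool that lets the zeros be shuffled around harmlessly, so I would invoke it to reconcile the modified sequences with the honest truncations if the chosen bases force extra zeros into the picture. Once these two one-sided estimates are in hand, Lemma \ref{lemma_11} delivers $\Delta(P_{[C]^e_n}([T]^g_n), \overline{P_C(T)}) < \varepsilon$ for $n \geq N$, which is the claimed limit.
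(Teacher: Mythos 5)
Your proposal is correct and follows essentially the same route as the paper: the same compactness check, the same finite-subcover upgrade of Lemma \ref{lemma_6b} for one direction, and the same extension of a finite permutation by the identity with the tail bound $\Vert T\Vert\sum_{j>n}|\gamma_j|$ for the other, concluding via Lemma \ref{lemma_11}. The bookkeeping worry you flag is resolved exactly as you suggest — the statement already fixes $(e_n)$ and $(g_n)$ to be diagonalizing bases ordered by the modified eigenvalue sequences, so $[C]^e_n=\operatorname{diag}(\gamma_1,\dots,\gamma_n)$ and $[T]^g_n=\operatorname{diag}(\tau_1,\dots,\tau_n)$ hold literally and Lemma \ref{Lemma_5b} is not needed here.
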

\begin{proof}
Again, in order to apply the results of Subsection \ref{subsec:set-convergence}, we have to
check that all sets occurring in Theorem \ref{lemma_6} are non-empty and compact. But this
is obviously the case, as all $P_{[C]^e_n}([T]^g_n)$ are non-empty and finite and $\overline{P_C(T)}$ is non-empty,
closed and bounded by $\nu_1(C)\Vert T\Vert$.

Let $(\gamma_j)_{j\in\mathbb N}$ and $(\tau_j)_{j\in\mathbb N}$ denote the modified eigenvalue sequences of 
$C$ and $T$, respectively. Obviously, for arbitrary $n\in\mathbb N$, the eigenvalues of $[C]^e_n$ and $[T]^g_n$
are given by $\lbrace \gamma_1,\ldots,\gamma_n\rbrace$ and $\lbrace \tau_1,\ldots,\tau_n\rbrace$.\medskip

W.l.o.g.~$T \neq 0$. Let $\varepsilon>0$. Due to compactness, there exist finitely many $w_1,\ldots,w_L\in\overline{P_C(T)}$ such that
\begin{align*}
\bigcup_{k=1}^L B_{\varepsilon/2}(w_k)\supset\overline{P_C(T)}
\end{align*}
where $B_{\varepsilon/2}(w_k)$ denotes open $\varepsilon/2$-balls around $w_k$. By Lemma
\ref{lemma_6b}, each of these $w_k$ admits $N_k\in\mathbb N$ such that $d(w_k,P_{[C]^e_n}([T]^g_n))<\varepsilon/2$ for all $n\geq N_k$. Define $N':=\max\lbrace N_1,\ldots,N_L\rbrace$. Now for any $w\in \overline{P_C(T)}$, there exists $k\in\lbrace 1,\ldots,L\rbrace$ such that $|w-w_k|<\varepsilon/2$ and thus
\begin{align*}
d(w,P_{[C]^e_{n}}([T]^g_{n}))\leq |w-w_k|+d(w_k,P_{[C]^e_{n}}([T]^g_{n}))<\varepsilon
\end{align*}
for all $n\geq N'$.

\medskip
Conversely, as in the previous proof there exists $N''$ such that 
such that
\begin{align*}
\sum_{j=N''+1}^\infty |\gamma_j| < \frac{\varepsilon}{\|T\|}\,.
\end{align*}
Let $v_n \in P_{[C]^e_n}([T]^g_n)$ so there exists a permutation
$\sigma_n \in S_n$ such that  
\begin{align*}
v_n=\sum_{j=1}^n \gamma_j\tau_{\sigma_n(j)}\,.
\end{align*}
Obviously, we can extend $\sigma_n$ to a permuation $\tilde\sigma_n:\mathbb N\to\mathbb N$ via
\begin{align*}
\tilde\sigma_n(j):=\begin{cases}\sigma_n(j)&1\leq j\leq n\,,\\ j&j>n\,.\end{cases}
\end{align*}
Then for $\tilde v_n := \sum_{j=1}^\infty\gamma_j\tau_{\tilde\sigma_n(j)} \in P_C(T) \subseteq\overline{P_C(T)}$ one has
\begin{align*}
|v_n-\tilde v_n| &= \Big|\sum_{j=1}^n \gamma_j\tau_{\sigma_n(j)} - \sum_{j=1}^\infty\gamma_j\tau_{\tilde\sigma_n(j)} \Big|\\
& = \Big|\sum_{j=n+1}^\infty\gamma_j\tau_j \Big| \leq \sum_{j=N+1}^\infty|\gamma_j||\tau_j| \leq 
\Vert T\Vert \sum_{j=N+1}^\infty|\gamma_j| <\varepsilon
\end{align*}
which yields $d(v_n,\overline{P_C(T)})<\varepsilon$ for all $n\geq N''$. Thus, choosing $N:=\max\lbrace N',N''\rbrace$, Lemma \ref{lemma_11} implies that the Hausdorff distance $\Delta(P_{[C]^e_n}([T]^g_n),\overline{P_C(T)})<\varepsilon$ for all $n\geq N$.
\end{proof}

With this result at hand, we can finally come back to the remaining part of the proof of Theorem \ref{theorem_4}.

\begin{proof}[Proof of Theorem \ref{theorem_4} -- second inclusion]
Let $(e_n)_{n\in\mathbb N}$
and $(g_n)_{n\in\mathbb N}$ be the orthonormal bases of $\mathcal H$ which diagonalize $C$ and $T$, respectively. Furthermore, let $[\,\cdot\,]_n^e$ and $[\,\cdot\,]_n^g$ be the maps given by \eqref{cut_out_operator} with respect to $(e_n)_{n\in\mathbb N}$ and $(g_n)_{n\in\mathbb N}$, respectively. By assumption, $[C]^e_k$ and $[T]^g_k$ are diagonal hence normal for all $k\in\mathbb N$, so by \cite[Corollary 2.4]{article_sunder} this implies
\begin{align}\label{eq:wc_pc_incl}
W_{[C]_{2n}^e}([T]_{2n}^g)\subseteq \operatorname{conv}(P_{[C]_{2n}^e}([T]_{2n}^g))
\end{align}
for all $n\in\mathbb N$. Using Lemma \ref{lemma_5}, Theorem \ref{lemma_2} and \ref{lemma_6}, we conclude
\begin{align*}
W_C(T)\subseteq \overline{W_C(T)}=\lim_{n\to\infty} W_{[C]_{2n}^e}([T]_{2n}^g)\subseteq \lim_{n\to\infty} \operatorname{conv}\big( P_{[C]_{2n}^e}([T]_{2n}^g) \big)= \operatorname{conv}(\overline{P_C(T)})\,.
\end{align*}
\end{proof}
\noindent Another proof of the second inclusion of Theorem \ref{theorem_4}, which is more oriented along the lines of the original proof \cite[Corollary 2.4]{article_sunder} can be found in Appendix \ref{app_sec_incl}. \medskip

In finite dimensions, it is well known \cite[Thm. 4]{article_marcus}, that for $A,C\in\mathbb C^{n\times n}$ 
one has
\begin{align}\label{lemma_marvin_conv}
W_C(A)=\operatorname{conv}(P_C(A))\,,
\end{align}
whenever $A$ and $C$ are both normal and the eigenvalues of $C$ form a collinear set in the complex plane. A generalization of this result to i.s.c. Hilbert spaces reads as follows.

\begin{corollary}\label{theorem_3}
Let $C\in\mathcal B^1(\mathcal H)$ and $T\in\mathcal K(\mathcal H)$ be both normal and assume that
the eigenvalues of $C$ or $T$ are collinear. Then
\begin{align*}
\overline{W_C(T)}=\operatorname{conv}(\overline{P_C(T)})\,.
\end{align*}
\end{corollary}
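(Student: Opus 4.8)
The plan is to combine the two inclusions already contained in Theorem~\ref{theorem_4} with the convexity statement of Theorem~\ref{theorem_1a}, so that essentially nothing beyond a bit of bookkeeping is needed. First I would record what comes for free: since $\overline{P_C(T)}$ is compact (it is closed and bounded by $\nu_1(C)\Vert T\Vert$), the set $\operatorname{conv}(\overline{P_C(T)})$ is compact, hence closed. Theorem~\ref{theorem_4} gives $P_C(T)\subseteq W_C(T)\subseteq\operatorname{conv}(\overline{P_C(T)})$, and passing to closures yields both $\overline{P_C(T)}\subseteq\overline{W_C(T)}$ and $\overline{W_C(T)}\subseteq\operatorname{conv}(\overline{P_C(T)})$. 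Thus only the reverse inclusion $\operatorname{conv}(\overline{P_C(T)})\subseteq\overline{W_C(T)}$ remains, and for this it suffices to know that $\overline{W_C(T)}$ is convex, because a convex set containing $\overline{P_C(T)}$ necessarily contains $\operatorname{conv}(\overline{P_C(T)})$.

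Second, I would invoke Theorem~\ref{theorem_1a} to establish convexity of $\overline{W_C(T)}$, distinguishing the two cases permitted by the hypothesis. If the eigenvalues of $C$ are collinear, then, $C$ being normal, Theorem~\ref{theorem_1a} applies directly. If instead the eigenvalues of $T$ are collinear, I would first show that $T$ is essentially self-adjoint: since $T$ is compact, its modified eigenvalue sequence $(\tau_n)_{n\in\mathbb N}$ converges to $0$, so $0$ belongs to the closure of the eigenvalue set of $T$; this set being contained in a line, and a line in $\mathbb C$ being closed, that line must pass through the origin. Hence there exists $\theta\in\mathbb R$ with $e^{-i\theta}\tau_n\in\mathbb R$ for all $n$, so $e^{-i\theta}T$ is self-adjoint (with $\xi=0$ in the definition), and Theorem~\ref{theorem_1a} again applies. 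Together with the first step this gives $\overline{W_C(T)}=\operatorname{conv}(\overline{P_C(T)})$.

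The only point requiring genuine care is the reduction ``eigenvalues of $T$ collinear $\Rightarrow$ $T$ essentially self-adjoint''; everything else is a formal manipulation of inclusions already established. As an alternative route that avoids this reduction altogether, one could mirror the proof of the second inclusion of Theorem~\ref{theorem_4}: for every $n$ the matrices $[C]^e_{2n}$ and $[T]^g_{2n}$ are diagonal, with at least one of them having collinear spectrum, so the finite-dimensional identity \eqref{lemma_marvin_conv} (together with the symmetry $W_C(A)=W_A(C)$ valid in finite dimensions) yields $W_{[C]^e_{2n}}([T]^g_{2n})=\operatorname{conv}(P_{[C]^e_{2n}}([T]^g_{2n}))$; letting $n\to\infty$ and using Theorem~\ref{lemma_2}, Theorem~\ref{lemma_6} and Lemma~\ref{lemma_5}(b), uniqueness of limits in the Hausdorff metric then gives $\overline{W_C(T)}=\operatorname{conv}(\overline{P_C(T)})$ directly.
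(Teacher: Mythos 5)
Your proof is correct and follows essentially the same route as the paper: the two inclusions of Theorem~\ref{theorem_4} combined with the convexity of $\overline{W_C(T)}$ supplied by Theorem~\ref{theorem_1a}. You are in fact more explicit than the paper's own (deliberately terse) proof in handling the case where only $T$ has collinear eigenvalues, correctly reducing it to essential self-adjointness via the observation that the eigenvalues of a compact operator accumulate at $0$, so the line carrying them must pass through the origin.
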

\begin{proof}
By Theorem \ref{theorem_4} one has $P_C(T)\subseteq W_C(T)\subseteq\operatorname{conv}(\overline{P_C(T)})$. Hence, taking the closure and convex hull yields
\begin{align*}
\operatorname{conv}(\overline{W_C(T)})=\operatorname{conv}(\overline{P_C(T)})\,.
\end{align*}
Here, we used the fact that the convex hull of a compact set in $\mathbb{R}^n$ is again compact.
On the other hand, $C$ meets the conditions of Theorem \ref{theorem_1a} and thus $\overline{W_C(T)}$
is already convex. This yields the desired equality and concludes the proof.
\end{proof}
\noindent The above proof was suggested by the referee and provides a major simplification of our original proof.
\section{Appendix}\label{appendix}
\renewcommand{\thesubsection}{\Alph{subsection}}

\subsection{Proof of Lemma \ref{lemma_5}}\label{appendix_A}
Note that for bounded sequences $(A_n)_{n\in\mathbb N}$ of non-empty
compact subsets of $\mathbb C$ which converges to $A$ with respect to the Hausdorff metric
one has the following characterization of the limit set according to Lemma \ref{lem:Hausdorff}:
\begin{align*}
x \in A \;\Longleftrightarrow\;\text{there exists a sequence $(a_n)_{n\in\mathbb N}$ with 
$a_n \in A_n$ and $a_n \to x$ for $n \to \infty$\,.}
\end{align*}

\begin{proof}[Proof of Lemma \ref{lemma_5}]
(a) Let $x\in A$ be given. Then there exists a sequence $(a_n)_{n\in\mathbb N}$ with 
$a_n \in A_n$ and $a_n \to x$ for $n \to \infty$. By assumption, we have $A_n \subset B_n$
and thus $a_n \in B_n$. Hence, by the above characterization of the limit set we obtain $x \in B$.

\medskip
\noindent
(b) Let $\varepsilon>0$ be given. By assumption there exists $N \in\mathbb  N$ such 
that for all $n \geq N$, $\Delta(A_n,A)<\varepsilon$. By Lemma \ref{lemma_11}, the latter 
is equivalent to the assertion that for all $a\in A$ there exists $a_n \in A_n$ satisfying
$|a-a_n|<\varepsilon$ and for all $a'_n \in A_n$ there exists $a' \in A$ with $|a'-a'_n|<\varepsilon$. 

\medskip
\noindent
First, let $x \in \operatorname{conv}(A)$ be arbitrary. By Caratheodory's theorem,
$x \in \operatorname{conv}(A)$ can be written as
\begin{align*}
x = r a + s b + t c 
\end{align*}
with $a,b,c\in A$, $r,s,t \geq 0$, and $r + s + t = 1$. 
Then for all $n\geq N$ we can choose $a_n,b_n,c_n\in A_n$ with distance less than 
$\varepsilon$ to $a,b,c$, respectively. This yields for
\begin{align*}
x_n := r a_n + s b_n + t c_n \in \operatorname{conv}(A_n)
\end{align*}
the estimate
\begin{align*}
|x-x_n|\leq r|a-a_n| + s|b-b_n| + t|c-c_n| < \varepsilon\,.
\end{align*}
Similarly, for every $x'_n\in\operatorname{conv}(A_n)$ one can choose $x'\in\operatorname{conv}(A)$ 
with $|x'-x'_n|<\varepsilon$ for all $n\geq N$. This proves (b) according to Lemma \ref{lemma_11}.

\medskip
\noindent
(c) If $A_n$ is convex, one has $A_n = \operatorname{conv}(A_n)$ for all $n\in\mathbb N$ and 
therefore by (b) we immediately obtain
\begin{align*}
A=\lim_{n\to\infty}A_n=\lim_{n\to\infty}\operatorname{conv}(A_n)=\operatorname{conv}(A)\,.
\end{align*}
Hence, $A$ is convex.

\medskip
\noindent
(d) We have to show $t z + (1-t)a \in A$ for all $a \in A$ and $t \in[0,1]$. To this end, let 
$a \in A$ and choose $a_n \in A_n$ such that $a_n \to a$ for $n \to \infty$. Since $A_n$ is
star-shaped with respect to $z_n$ one has $t z_n + (1-t)a_n \in A_n$ for all $n \in \mathbb N$.
Moreover, $t z_n + (1-t)a_n$ converges obviously to $t z + (1-t)a$ and therefore by the 
above characterization of the limit set we conclude $t z + (1-t)a \in A$.
\end{proof}

\subsection{Proof of Lemma \ref{U-approximation}}\label{App_Lemma_U}

To prove Lemma \ref{U-approximation} we need the following auxiliary result.

\begin{lemma}\label{lem:unitary-dilation}
Let $U\in\mathbb C^{n\times n}$ with $\Vert U\Vert\leq 1$. Then one can find matrices
$Q,R,S\in\mathbb C^{n\times n}$ such that
\begin{align*}
V:=\begin{pmatrix} U&Q\\R&S \end{pmatrix}\in\mathbb C^{2n\times 2n}
\end{align*}
is unitary.
\end{lemma}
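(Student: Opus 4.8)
The plan is to exhibit an explicit unitary dilation of the contraction $U$, following the classical construction of a defect-operator dilation (in the form one would use in, say, the theory of unitary dilations of contractions on Hilbert space, here specialized to the finite-dimensional setting). Since $\Vert U\Vert\le 1$, both $I-U^\dagger U$ and $I-UU^\dagger$ are positive semidefinite, so their positive square roots $D_U:=(I-U^\dagger U)^{1/2}$ and $D_{U^\dagger}:=(I-UU^\dagger)^{1/2}$ are well-defined positive semidefinite matrices in $\mathbb C^{n\times n}$. My proposal is to set
\begin{align*}
V:=\begin{pmatrix} U & D_{U^\dagger}\\ D_U & -U^\dagger \end{pmatrix}\in\mathbb C^{2n\times 2n}\,,
\end{align*}
i.e. $Q=D_{U^\dagger}$, $R=D_U$, $S=-U^\dagger$.

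The key computations, carried out in order, are: (i) verify $V^\dagger V=I_{2n}$, which amounts to checking the three block identities $U^\dagger U+D_U^2=I$ (immediate from the definition of $D_U$), $U^\dagger D_{U^\dagger}-D_U U^\dagger=0$, and $D_{U^\dagger}^2+UU^\dagger=I$ (again immediate); (ii) symmetrically verify $VV^\dagger=I_{2n}$. The only nonroutine point is the intertwining identity $U^\dagger D_{U^\dagger}=D_U U^\dagger$, equivalently $U D_U = D_{U^\dagger} U$. This follows from the fact that $U$ intertwines the polynomials $p(U^\dagger U)$ and $p(UU^\dagger)$: indeed $U(U^\dagger U)=(UU^\dagger)U$, hence $U\,p(U^\dagger U)=p(UU^\dagger)\,U$ for every polynomial $p$, and approximating the square-root function uniformly on $[0,1]$ by polynomials (Weierstrass) gives $U D_U = D_{U^\dagger} U$; taking adjoints yields $D_U U^\dagger = U^\dagger D_{U^\dagger}$. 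In finite dimensions one may alternatively argue via simultaneous diagonalization on the orthogonal complement of $\ker U$, or via the singular value decomposition of $U$, which makes all four blocks simultaneously ``diagonal'' in suitably chosen bases and renders every identity a scalar check.

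The step I expect to be the (mild) main obstacle is precisely this intertwining relation $U D_U = D_{U^\dagger} U$, since it is the one identity that does not follow by merely unwinding definitions; everything else is bookkeeping with $2\times2$ block matrices. Once that is in hand, $V^\dagger V=VV^\dagger=I_{2n}$, so $V$ is unitary with upper-left block $U$, which is exactly the claim.
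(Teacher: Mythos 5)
Your proof is correct: the block identities for $V^\dagger V=I_{2n}$ reduce to $U^\dagger U+D_U^2=I$, $D_{U^\dagger}^2+UU^\dagger=I$ and the intertwining relation $UD_U=D_{U^\dagger}U$, and your derivation of the latter from $U\,p(U^\dagger U)=p(UU^\dagger)\,U$ together with polynomial approximation of $t\mapsto\sqrt{1-t}$ on $[0,1]$ (or, equivalently, the SVD argument) is sound; in finite dimensions $V^\dagger V=I_{2n}$ already forces $VV^\dagger=I_{2n}$. Your route differs from the paper's in how the bottom half of $V$ is obtained. The paper also takes $Q=\sqrt{I_n-UU^\dagger}=D_{U^\dagger}$, but then simply notes that the first $n$ rows $(U\;\;Q)$ form an orthonormal system in $\mathbb C^{2n}$ (since $UU^\dagger+QQ^\dagger=I_n$) and completes this system to an orthonormal basis, which produces $R$ and $S$ non-constructively; this entirely sidesteps the intertwining identity, which is the only non-trivial step in your argument. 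Your version buys an explicit closed form for all four blocks (the classical Halmos dilation $R=D_U$, $S=-U^\dagger$), at the cost of having to prove $UD_U=D_{U^\dagger}U$; the paper's version is shorter and suffices for its purpose, since only the existence of some unitary completion is used downstream in Lemma \ref{U-approximation}.
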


\begin{proof}
Obviously, $\Vert U\Vert\leq 1$ implies $\operatorname{I}_n-U U^\dagger\geq 0$, where $\operatorname{I}_n$ denotes 
the $n\times n$ identity matrix. Hence 
$Q:=\sqrt{\operatorname{I}_n-UU^\dagger }$ is well-defined. Now the upper $n$ rows of $V$ form an orthonormal system in 
$\mathbb C^{2n}$ as
\begin{align*}
\begin{pmatrix} U&Q \end{pmatrix}\begin{pmatrix} U^\dagger\\Q^\dagger \end{pmatrix}
= UU^\dagger +QQ^\dagger=\operatorname{I}_n.
\end{align*}
Completing this orthonormal system to an orthonormal basis of $\mathbb C^{2n}$ gives $R,S$ such that,
in total, $V$ is unitary.
\end{proof}

\begin{proof}[Proof of Lemma \ref{U-approximation}]
Let $U\in\mathcal B(\mathcal H)$ be unitary and consider arbitrary orthonormal bases 
$(e_n)_{n\in\mathbb N}$, $(g_n)_{n\in\mathbb N}$ of $\mathcal H$. For all $n\in\mathbb N$ one has $\Vert (\Gamma_n^g)^\dagger U\Gamma_n^e\Vert\leq 1$ so Lemma
\ref{lem:unitary-dilation} yields $Q_n,R_n,S_n\in\mathbb C^{n\times n}$ such that
\begin{align*}
V_n:=\begin{pmatrix} (\Gamma_n^g)^\dagger U\Gamma_n^e&Q_n\\R_n&S_n \end{pmatrix}\in\mathbb C^{2n\times 2n}
\end{align*}
is unitary. Define $\hat U_n:=\Gamma_{2n}^gV_n(\Gamma_{2n}^e)^\dagger\in\mathcal B(\mathcal H)$. Then, obviously, (b) and (c) 
of Lemma \ref{U-approximation} hold. To show that $(\hat U_n)_{n\in\mathbb N}$ converges strongly to
$U$ we first observe $\|\hat U_n x - Ux\| \leq \|\hat U_n x - \Pi^g_{n}U\Pi^e_{n}x\| + \|\Pi^g_{n}U\Pi^e_{n}x - Ux\|$
and 
\begin{align*}
\|\Pi^g_{n}U\Pi^e_{n}x - Ux\| \leq\Vert \Pi^g_{n}U\Pi^e_{n}x-\Pi^g_nUx\Vert+ \Vert\Pi^g_nUx-Ux\Vert\leq \Vert \Pi^e_{n}x-x\Vert+ \Vert\Pi^g_nUx-Ux\Vert \,.
\end{align*}
Hence, $(\Pi^g_{n}U\Pi^e_{n})_{n\in\mathbb N}$ converges strongly to $U$ by Lemma \ref{lemma_proj_strong_conv} (a)
and, therefore, it suffices to show that
\begin{align*}
Z_n:=\hat U_n-\Pi^g_{n}U\Pi^e_{n}=\Gamma_{2n}^g\begin{pmatrix} 0&Q_n\\R_n&S_n \end{pmatrix}(\Gamma_{2n}^e)^\dagger
\end{align*}
strongly converges to 0. Let $x \in\mathcal H\setminus\lbrace 0\rbrace$ and $\varepsilon>0$ be given.
Again, by Lemma \ref{lemma_proj_strong_conv} (a), one can choose $N\in\mathbb N$ such that
\begin{align}
\begin{split}
\Vert x\Vert^2-\Vert\Pi_n^gUx\Vert^2 &= \Vert Ux\Vert^2-\Vert\Pi_n^gUx\Vert^2 = \Vert\Pi_n^gUx-Ux\Vert^2<\frac{\varepsilon^2}{8}\\
\text{and} \quad \Vert\Pi_n^e x-x\Vert&<\min\Big\lbrace\frac{\varepsilon^2}{16\Vert x\Vert},\frac{\varepsilon}{2\sqrt{2}}\Big\rbrace\end{split}\label{pi_ineqs}
\end{align}
for all $n\geq N$. Now let $\Lambda_n^e:\mathbb C^n\to\mathcal H$ be the unique linear operator
given by $\hat e_j\mapsto e_{j+n}$ for $j\in\lbrace1,\ldots,n\rbrace$. So basically $(\Lambda_n^e)^\dagger$ ``cuts out'' the components $x_{n+1},\ldots,x_{2n}$ of $x\in\mathcal H$ with respect to $(e_n)_{n\in\mathbb N}$. Next, we decompose $x$ as follows
\begin{align*}
x=\Pi_n^ex+(\Pi^e_{2n}-\Pi^e_n)x+(\operatorname{id}_{\mathcal H}-\Pi_{2n}^e)x\,.
\end{align*}
Then $\Pi_n^ex \in \mathcal H$ and $x_n:=(\Gamma_n^e)^\dagger x\in\mathbb C^n$ are essentially the same vectors, as those differ only by the isometric embedding $\Gamma_n^e$. The same holds for $(\Pi^e_{2n}-\Pi^e_n)x \in \mathcal H$ and $y_n:=(\Lambda_n^e)^\dagger x\in\mathbb C^n$. 
Taking into account that $\Gamma_{2n}^g$ is an isometry, we obtain
\begin{align*}
\Vert x\Vert^2\geq\Vert\hat U_nx\Vert^2=\Vert(\Gamma_n^g)^\dagger U\Gamma_n^ex_n+Q_ny_n\Vert^2+\Vert R_nx_n+S_ny_n\Vert^2
\end{align*}
and thus
\begin{align}\label{ineqs-2}
\Vert R_nx_n+S_ny_n\Vert^2&\leq\Vert x\Vert^2-\Vert(\Gamma_n^g)^\dagger U\Gamma_n^ex_n+Q_ny_n\Vert^2\nonumber\\
&= \Vert x\Vert^2-\Vert(\Gamma_n^g)^\dagger Ux - ((\Gamma_n^g)^\dagger Ux-(\Gamma_n^g)^\dagger U\Gamma_n^ex_n-Q_ny_n)\Vert^2\nonumber\\
&\leq \Vert x\Vert^2 - \big|\Vert(\Gamma_n^g)^\dagger Ux \Vert - \Vert (\Gamma_n^g)^\dagger Ux-(\Gamma_n^g)^\dagger U\Gamma_n^ex_n-Q_ny_n\Vert \big|^2 \,,
\end{align}
where the last estimate follows from the reverse triangle inequality. Then, using again that $\Gamma_n^g$ is an isometry satisfying $\Gamma_n^g(\Gamma_n^g)^\dagger=\Pi_n^g$ and further $\Vert Q_n\Vert\leq 1$ by construction, we deduce from 
\eqref{pi_ineqs} and \eqref{ineqs-2} the estimate
\begin{align*}
\Vert R_nx_n+S_ny_n\Vert^2&\leq\Vert x\Vert^2-\Vert\Pi_n^g Ux\Vert^2+2\Vert \Pi_n^g Ux \Vert\Vert\Pi_n^gUx-\Pi_n^gU\Pi_n^ex-\Gamma_n^gQ_ny_n\Vert\\
&<\frac{\varepsilon^2}{8}+2\Vert\Pi_n^g Ux\Vert \big(\Vert\Pi_n^gU\Vert\Vert x-\Pi_n^ex\Vert+\Vert Q_n\Vert\Vert\Pi_{2n}^ex-\Pi_n^ex\Vert\big)\\
&\leq\frac{\varepsilon^2}{8}+2\Vert x\Vert\big(\Vert x-\Pi_n^ex\Vert+\Vert \Pi_{2n}^ex-x\Vert+\Vert x-\Pi_n^ex\Vert\big)
< \frac{\varepsilon^2}{2}
\end{align*}
for all $n\geq N$. Finally, it follows
\begin{align*}
\Vert Z_nx\Vert^2&=\Vert Q_ny_n\Vert^2+\Vert R_nx_n+S_ny_n\Vert^2<\Vert Q_n\Vert^2\Vert\Pi_{2n}^ex-\Pi_n^ex\Vert^2+\frac{\varepsilon^2}{2}\\
&\leq  (\Vert \Pi_{2n}^ex-x\Vert+\Vert x-\Pi_n^ex\Vert)^2+\frac{\varepsilon^2}{2}<\Big( 2\frac{\varepsilon}{2\sqrt{2}} \Big)^2+\frac{\varepsilon^2}{2}=\varepsilon^2
\end{align*}
for all $n\geq N$. This proves part (a) and, in total, Lemma \ref{U-approximation}.
\end{proof}

\subsection{Proof of Lemma \ref{Lemma_5b}}\label{appendix_b}

Recall that $(a_n)_{n\in\mathbb N}$ and $(b_n)_{n\in\mathbb N}$ are sequences of complex numbers
such that $\sum_{j=1}^\infty |a_j|<\infty$ and $(b_n)_{n\in\mathbb N}$ converges to zero while
$(a'_n)_{n\in\mathbb N}$ and $(b'_n)_{n\in\mathbb N}$ are sequences of complex numbers which differ 
from $(a_n)_{n\in\mathbb N}$ and $(b_n)_{n\in\mathbb N}$, respectively, only by a finite or infinite
number of zeros.

\begin{proof}[Proof of Lemma \ref{Lemma_5b}]
Consider the following intermediate sets:
\begin{align*}
A&:=\Big\lbrace \sum\nolimits_{n=1}^\infty a_nb_{\sigma(n)} \,\Big|\, \sigma:\mathbb N \to\mathbb N \text{ is permutation}\Big\rbrace\,,\\
A_1&:=\Big\lbrace \sum\nolimits_{n=1}^\infty a_nb'_{\sigma(n)} \,\Big|\, \sigma:\mathbb N \to\mathbb N \text{ is permutation}\Big\rbrace\,,\\
A_2&:=\Big\lbrace \sum\nolimits_{n=1}^\infty a'_nb_{\sigma(n)} \,\Big|\, \sigma:\mathbb N \to\mathbb N \text{ is permutation}\Big\rbrace\,,\\
A'&:=\Big\lbrace \sum\nolimits_{n=1}^\infty a'_nb'_{\sigma(n)} \,\Big|\, \sigma:\mathbb N \to\mathbb N \text{ is permutation}\Big\rbrace\,.
\end{align*}
We will proceed as follows: First we will show that the closure of $A$ and $A_1$ coincides, then 
that of $A$ and $A_2$ and finally that of $A_2$ and $A'$. In doing so, we can assume w.l.o.g.~that 
$(a_n)_{n\in\mathbb N}$ does not vanish everywhere and thus one has $s:= \sum_{j=1}^\infty |a_j| > 0$.
As $(b_n)_{n\in\mathbb N}$ is a null sequence there exists $\kappa>0$ such that $|b_k|\leq \kappa$ for
all $k\in\mathbb N$. Furthermore, for every $\varepsilon>0$ there exists $N\in\mathbb N$ such that
\begin{align*}
\sum_{j=N+1}^\infty |a_j| < \frac{\varepsilon}{4\kappa} \quad\text{and}\quad 
\max_{n\geq N}\lbrace|b_n|,|b_n'|\rbrace<\frac{\varepsilon}{4 s}\,.
\end{align*}

\medskip
\noindent
To prove $\overline A=\overline{A}_1$ let $\varepsilon>0$ and $x\in\overline A$. Hence there exists a 
permutation $\sigma: \mathbb N \to \mathbb N$ such that $x':=\sum_{n=1}^\infty a_nb_{\sigma(n)}$ satisfies 
$|x-x'| < \varepsilon/4$. Now by \eqref{eq:Lemma_5b_1} one can construct a permutation 
$\hat\sigma: \mathbb N \to \mathbb N$ which satisfies for all $k\in\lbrace1,\ldots,N\rbrace$ the
following properties:
\begin{itemize}
\item If $b_{\sigma(k)}\neq 0$, then $b_{\sigma(k)}=b'_{\hat\sigma(k)}$.\vspace{4pt}
\item If $b_{\sigma(k)}= 0$, then $\hat\sigma(k)\geq N$.
\end{itemize}
Then, for $y:=\sum_{n=1}^\infty a_n b'_{\hat\sigma(n)} \in A_1$ one has 
\begin{align*}
|x-y| & < \frac{\varepsilon}{4}+\Big|\sum_{n=1}^N a_n(b_{\sigma(n)}-b'_{\hat\sigma(n)})\Big|
+ \sum_{n=N+1}^\infty |a_n|\big(|b_{\sigma(n)}|+|b'_{\hat\sigma(n)}|\big)\\
& <\frac{3\varepsilon}{4} + \max_{n\in\lbrace1,\ldots,N\rbrace}|b_{\sigma(n)}-b'_{\hat\sigma(n)}|\sum_{n=1}^N|a_n|
< \frac{3\varepsilon}{4} + \frac{\varepsilon}{4 s} \sum_{n=1}^N|a_n|\leq\varepsilon\,.
\end{align*}
This shows the inclusion $\overline A \subset \overline{A}_1$. Obviously, the role of
$(b_n)_{n \in \mathbb N}$ and $(b_n')_{n \in \mathbb N}$ is interchangeable and thus the converse
inclusion follows in the same way.

\medskip
\noindent
Next, we prove $\overline A = \overline{A}_2$. As by assumption all sums converge absolutely, 
rearranging them via permutations does not change their value and thus
\begin{align*}
A=\Big\lbrace \sum\nolimits_{n=1}^\infty a_{\sigma(n)}b_{n} \,\Big|\, \sigma:\mathbb N \to\mathbb N \text{ is permutation}\Big\rbrace
\end{align*}
and analogously for $A_2$. Now let $\varepsilon>0$ and $x\in \overline{A}$. Then there exists a permutation 
$\sigma: \mathbb N \to \mathbb N$ such that $x'' := \sum_{n=1}^\infty a_{\sigma(n)}b_n$ satisfies 
$|x-x''| < \varepsilon/4$. Furthermore, one can choose $N' \geq N$ such that 
\begin{align*}
|a_n'|<\frac{\varepsilon}{4\kappa N}
\end{align*}
for all $n\geq N'\geq N$. Again, due to \eqref{eq:Lemma_5b_1} one can construct a permutation
$\hat\sigma: \mathbb N \to \mathbb N$ which satisfies for all $k\in\lbrace1,\ldots,N\rbrace$ the following:
\begin{itemize}
\item If $a_{\sigma(k)}\neq 0$, then $a_{\sigma(k)}=a'_{\hat\sigma(k)}$.\vspace{4pt}
\item If $a_{\sigma(k)}= 0$, then $\hat\sigma(k)\geq N'$.
\end{itemize}
Hence, for $y:=\sum_{n=1}^\infty a'_{\hat\sigma(n)} b_n\in A_2$ we obtain
\begin{align*}
|x-y|&<\frac{\varepsilon}{4} + \kappa\sum_{n=1}^N |a_{\sigma(n)}-a'_{\hat\sigma(n)}|
+ \max_{n\geq N}|b_n|\Big(\sum_{n=N+1}^\infty |a_{\sigma(n)}|+\sum_{n=N+1}^\infty |a'_{\hat\sigma(n)}|\Big)\\
&< \frac{\varepsilon}{4} + \frac{\kappa\varepsilon}{4 \kappa N}N 
+ \frac{\varepsilon}{2 s}\sum_{n=1}^\infty |a_n|= \varepsilon\,.
\end{align*}
Here we used $\sum_{n=1}^\infty |a_n|=\sum_{n=1}^\infty |a_n'|$ as implied by \eqref{eq:Lemma_5b_1}. 
As before, we can interchange the role of $(a_n)_{n \in \mathbb N}$ and $(a_n')_{n \in \mathbb N}$
and thus conclude $\overline A=\overline{A}_2$. 

\medskip
\noindent
Finally, $\overline{A}=\overline{A}_1$ implies $\overline{A}_2 = \overline{A'}$ by choosing
$(a'_n)_{n\in\mathbb N}=(a_n)_{n\in\mathbb N}$ and therefore $\overline{A}=\overline{A}_2=\overline{A'}$.
\end{proof}

\noindent
Note that Lemma \ref{Lemma_5b} (a) becomes false if one waives the assumption that $(b_n)_{n\in\mathbb N}$
converges to zero. For an example, see Appendix \ref{appendix_examples} (Ex. \ref{ex_3}).

\subsection{The spectrum of compact triangular operators}\label{appendix_c}
\begin{theorem}\label{lemma_12}
Let $T\in\mathcal K(\mathcal H)$ be upper or lower triangular with respect to
the orthonormal basis $(e_n)_{n\in\mathbb N}$. Then
\begin{align*}
\sigma(T) \setminus \{0\} = \lbrace\langle e_j,Te_j\rangle\,|\,j\in\mathbb N\rbrace \setminus \{0\}\,.
\end{align*}
Moreover, for all non-zero $\lambda \in \sigma(T)$ the algebraic multiplicity $\nu_a(\lambda)\in\mathbb N$
coincides with the cardinality of the set $\lbrace j\in\mathbb N \,|\, \lambda = \langle e_j,Te_j\rangle\rbrace$.
\end{theorem}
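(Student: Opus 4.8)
The plan is to prove the statement for an \emph{upper} triangular $T$ and to deduce the lower-triangular case by passing to the adjoint: if $T$ is lower triangular with respect to $(e_n)_{n\in\mathbb N}$, then $T^\dagger\in\mathcal K(\mathcal H)$ is upper triangular with respect to the same basis, $\sigma(T^\dagger)\setminus\{0\}=\{\bar\mu\,|\,\mu\in\sigma(T)\setminus\{0\}\}$, $\langle e_j,T^\dagger e_j\rangle=\overline{\langle e_j,Te_j\rangle}$, and $\nu_a(\lambda;T)=\nu_a(\bar\lambda;T^\dagger)$, because $(T-\lambda)^n=(-\lambda)^n\operatorname{id}_{\mathcal H}+K$ with $K$ compact has Fredholm index $0$, so $\dim\ker(T-\lambda)^n=\dim\ker((T-\lambda)^\dagger)^n$ for every $n$.

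So assume $T$ upper triangular. Two observations drive the argument. First, $T$ compact and $e_j\rightharpoonup0$ give $Te_j\to0$, hence $\langle e_j,Te_j\rangle\to0$, so for $\lambda\neq0$ the set $J_\lambda:=\{j\in\mathbb N\,|\,\langle e_j,Te_j\rangle=\lambda\}$ is finite. Second, for each $n$ the subspace $V_n:=\operatorname{span}\{e_1,\dots,e_n\}$ is $T$-invariant (since $Te_k\in V_k$ for $k\le n$), so with respect to $\mathcal H=V_n\oplus V_n^{\perp}$ one has
\[
T=\begin{pmatrix}{[T]_n} & B_n\\ 0 & D_n\end{pmatrix},
\]
where $[T]_n\in\mathbb C^{n\times n}$ is upper triangular with diagonal $(\langle e_j,Te_j\rangle)_{j=1}^n$, $B_n=\Pi_nT(\operatorname{id}_{\mathcal H}-\Pi_n)$, and $D_n=(\operatorname{id}_{\mathcal H}-\Pi_n)T(\operatorname{id}_{\mathcal H}-\Pi_n)$; the latter is again compact and upper triangular in $(e_j)_{j>n}$ with diagonal entries $\langle e_j,Te_j\rangle$ for $j>n$, and $\|D_n\|\le\|T(\operatorname{id}_{\mathcal H}-\Pi_n)\|\to0$ by Lemma \ref{lemma_proj_strong_conv}. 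The spectral identity is now immediate: if $\lambda=\langle e_m,Te_m\rangle\neq0$, then $\lambda$ is an eigenvalue of the triangular matrix $[T]_m$, hence of the restriction $T|_{V_m}$, hence of $T$; conversely, if $\lambda\neq0$ is not a diagonal entry, then for $n$ with $\|D_n\|<|\lambda|$ both $[T]_n-\lambda$ (a triangular matrix with no $\lambda$ on the diagonal) and $D_n-\lambda$ (Neumann series) are invertible, so the block upper-triangular operator $T-\lambda$ is invertible and $\lambda\notin\sigma(T)$.

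For the multiplicity claim, fix $\lambda\in\sigma(T)\setminus\{0\}$ and set $M:=\max J_\lambda$, which is finite and nonempty by the previous step. Use the same block decomposition with $M$ in place of $n$, writing $A:=[T]_M$, $D:=D_M$, $B:=B_M$. By maximality of $M$ no diagonal entry of $D$ equals $\lambda$, so the spectral identity \emph{applied to the compact triangular operator $D$ on the i.s.c.\ Hilbert space $V_M^{\perp}$} gives $\lambda\notin\sigma(D)$, i.e.\ $D-\lambda$ is invertible. Since $(T-\lambda)^k$ is again block upper triangular with diagonal blocks $(A-\lambda)^k$ and $(D-\lambda)^k$, a vector $(u,v)\in V_M\oplus V_M^{\perp}$ satisfies $(T-\lambda)^k(u,v)=0$ iff $(D-\lambda)^kv=0$ and $(A-\lambda)^ku=0$; invertibility of $D-\lambda$ forces $v=0$, hence $\ker(T-\lambda)^k=\ker(A-\lambda)^k\oplus\{0\}$ for every $k$. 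Passing to a power that stabilises both kernel chains yields $\nu_a(\lambda;T)=\nu_a(\lambda;A)$, which equals the multiplicity of $\lambda$ as a zero of $\det(z\operatorname{I}_M-A)=\prod_{j=1}^M(z-\langle e_j,Te_j\rangle)$, i.e.\ $|J_\lambda|$, as claimed.

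The main obstacle is the bookkeeping of the compression step that lets the already-proved spectral identity be reused: one must check carefully that $D$ (and each $D_n$) is genuinely a compact operator on an infinite-dimensional separable Hilbert space, triangular in the tail basis, with precisely the remaining diagonal entries, and that $\|D_n\|\to0$ — all of which reduce to compactness of $T$ together with $\Pi_n\to\operatorname{id}_{\mathcal H}$ strongly (Lemma \ref{lemma_proj_strong_conv}). A secondary technical point is the adjoint reduction, which uses the index-$0$ Fredholm theory of $(T-\lambda)^n$ to transport algebraic multiplicities between $T$ and $T^\dagger$. Everything else is elementary linear algebra and the basic Riesz theory of compact operators.
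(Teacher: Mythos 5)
Your proof is correct and follows essentially the same route as the paper's: block-decompose $T$ along $V_n\oplus V_n^\perp$, invert the tail block (you via $\Vert D_n\Vert\to 0$ and, for the multiplicity, by re-applying the already-proved spectral identity to $D_M$ with $M=\max J_\lambda$; the paper via choosing $n$ with $\Vert T_n-T\Vert<|\lambda|$), read off $\ker(T-\lambda)^k$ from the block-triangular form, and handle the lower-triangular case by passing to $T^\dagger$ with an index-zero/Fredholm argument for the multiplicities. The only nitpick is that your intermediate ``iff'' in the multiplicity step should read $(D-\lambda)^k v=0$ and $(A-\lambda)^k u + E_k v=0$ for some cross term $E_k$; since invertibility of $D-\lambda$ forces $v=0$ anyway, your conclusion $\ker(T-\lambda)^k=\ker(A-\lambda)^k\oplus\{0\}$ stands.
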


\begin{proof}
First, let us assume $T\in\mathcal K(\mathcal H)$ to be upper triangular with respect to the orthonormal
basis $(e_n)_{n\in\mathbb N}$ and define $\mathcal H_n$ to be the linear span of $e_1, \dots, e_n$. Note
that each $\mathcal H_n$ is a finite-dimensional invariant subspace of $T$. 
Now consider any nonzero $\lambda \in \mathbb C$. According to Lemma \ref{lemma_proj_strong_conv} 
one can choose $n \in \mathbb N$ such that $\|T_n - T\| < |\lambda|$, where $T_n$ denotes
the corresponding block approximation \eqref{eq:defi_block} of $T$ with respect to $(e_n)_{n\in\mathbb N}$. Then the orthogonal
decomposition $\mathcal H = \mathcal H_n \oplus \mathcal H_n^\perp$ induces the following block
matrix representations
\begin{align*}
T := \begin{pmatrix}
A & B \\ 0 & C
\end{pmatrix}
\quad \text{and}\quad
T_n := \begin{pmatrix}
A & 0 \\ 0 & 0
\end{pmatrix}
\end{align*}
of $T$ and $T_n$ with $\|C\| < |\lambda|$, where $A$ and $C$ are upper triangular. Hence one 
has the following equivalences:
\begin{align*}
T - \lambda \operatorname{id}_{\mathcal H} \;\text{is invertible}
\quad\Longleftrightarrow\quad
\begin{pmatrix}
A - \lambda\operatorname{id}_{\mathcal H_n} & B \\ 0 & C - \lambda \operatorname{id}_{\mathcal H_n^{\perp}}
\end{pmatrix}
\;\text{is invertible}\\[2mm]
\Longleftrightarrow\quad
A - \lambda\operatorname{id}_{\mathcal H_n} \;\text{is invertible}
\quad\Longleftrightarrow\quad
\lambda \neq  \langle e_j,Te_j\rangle \;\text{for all}\; j = 1, \dots, n\,.
\end{align*}
Therefore, we conclude
$\sigma(T) \setminus \{0\} = \lbrace\langle e_j,Te_j\rangle\,|\,j\in\mathbb N\rbrace \setminus \{0\}$.
Moreover, because of the straightforward equivalence:
\begin{align*}
x \in \ker (T - \lambda \operatorname{id}_{\mathcal H})^n
\quad\Longleftrightarrow\quad
\Pi_n x \in \ker (A - \lambda \operatorname{id}_{\mathcal H})^n \;\text{and}\; (\operatorname{id}_{\mathcal H}-\Pi_n)x = 0\,,
\end{align*}
where $\Pi_n$ denotes as usual the corresponding orthogonal projection, the algebraic multiplicity (cf. footnote \ref{footnote_alg_mult}) of $\lambda \neq 0$ with respect to
$T$ is equal to the algebraic multiplicity of $\lambda \neq 0$ with respect to $A$, which obviously
equals the number of diagonal entries of $A$ that coincide with $\lambda \neq 0$. 

\medskip
\noindent
Finally, if $T\in\mathcal K(\mathcal H)$ is lower triangular with respect to the orthonormal
basis $(e_n)_{n\in\mathbb N}$, we simply pass to $T^\dagger$ which now obviously is upper triangular with respect to the same basis. Then, keeping in mind the following facts,
the result follows immediately from the first part:
\begin{itemize}
\item
$\sigma(T^\dagger) = \overline{\sigma(T})$, where $\overline{(\cdot)}$ denotes the complex
conjugate.\vspace{4pt}
\item $T^\dagger$ is compact if and only if $T$ is compact and for all $\lambda \neq 0$, the algebraic
multiplicity of $\lambda$ with respect to $T$ coincides with the algebraic multiplicity of
$\overline{\lambda}$ with respect to $T^\dagger$ as a simple consequence of \cite[Lemma 15.9 \& 15.10]{MeiseVogt}.\vspace{4pt}
\item
$\overline{\langle e_j,T^\dagger e_j\rangle} = \langle e_j,Te_j\rangle$ for all $j \in \mathbb N$.\qedhere
\end{itemize}
\end{proof}

\begin{remark}
\begin{enumerate}
\item The above proof shows that for any upper triangular (not necessarily compact) operator
$T \in \mathcal B(\mathcal H)$ all ``diagonal entries'' $\langle e_j,Te_j\rangle$ are eigenvalues
of $T$. This in general is false for lower triangular operators. However, if $T$ is compact and 
$\langle e_j,Te_j\rangle$ is non-zero, then it is also true for lower triangular operators as
seen above.\vspace{4pt}
\item To see what happens to Theorem \ref{lemma_12} if we waive the compactness of the operator $T$, consider the left shift on $\ell_2(\mathbb N)$ which is obviously upper triangular with respect to the standard basis of $\ell_2(\mathbb N)$. The diagonal entries are all zero, however the point spectrum of the left shift coincides with the interior of the unit disk so ``the'' diagonal elements are neither dense in the whole spectrum nor in the point spectrum.
\end{enumerate}
\end{remark}

\subsection{Examples}\label{appendix_examples}
\begin{example}\label{ex_1}
Consider the set
$E:=\lbrace C\in\mathcal B^1(\mathcal H)\,|\,\nu_1(C)\leq 1\rbrace\subset \mathcal B^1(\mathcal H)$
and define $C_n=\langle e_{n+1},\cdot\rangle e_{n+1}$, where $(e_n)_{n\in\mathbb N}$ is some orthonormal 
basis of $\mathcal H$. Obviously, $C_n \in E$ as $\nu_1(C_n)=1$. Moreover, let $\Pi_n$ be the 
corresponding orthogonal projections as in \eqref{eq:defi_block} and set $T:=\operatorname{id}_{\mathcal H}$
and $S_n=\Pi_n$ for all $n\in\mathbb N$. Then, by Lemma \ref{lemma_proj_strong_conv} (a), the projections
$\Pi_n$  converge strongly to $\operatorname{id}_{\mathcal H}$ but
\begin{align*}
\sup_{C\in E}|\operatorname{tr}(CS_n^\dagger TS_n-C)|=\sup_{C\in E}|\operatorname{tr}(C\Pi_n-C)|
\geq |\operatorname{tr}(C_n\Pi_n - C_n)|=1
\end{align*}
as $C_n\Pi_n = 0$. Hence, $\lim_{n\to\infty}\sup_{C\in E}|\operatorname{tr}(CS_n^\dagger TS_n-CS^\dagger TS)| \geq 1$,
i.e. $\operatorname{tr}\big((\cdot)S_n^\dagger TS_n\big)_{n \in \mathbb N}$ does not converges uniformly
to $\operatorname{tr}\big((\cdot)S^\dagger TS\big)$ on $E$.
\end{example}

\begin{example}\label{ex_2}
Let $(e_n)_{n\in\mathbb N}$ of $\mathcal H$ be an orthonormal basis of $\mathcal H$ 
and choose $C := \langle e_1,\cdot\rangle e_1$ and $T := \operatorname{id}_{\mathcal H}$. Then, for
the corresponding block approximations, one has $C_n = C$ and $T_n = \Pi_n$ for all $n \in \mathbb N$,
where $\Pi_n$ denotes the orthogonal projection onto $\operatorname{span}\{e_1, \dots, e_n\}$.
Therefore, we conclude
\begin{align*}
W_{C_n}(T_n) = \{\operatorname{tr}(C_nU^\dagger T_nU)\,|\, U \in \mathcal B(\mathcal H)\;\text{unitary}\}
= \{\langle x,\Pi_nx \rangle\,|\, \Vert x \Vert = 1\} = [0,1]
\end{align*}
and thus $1 = \overline{W_C(T)} \subsetneq \lim_{n\to\infty}\overline{W_{C_n}(T_n)} = [0,1]$.
\end{example}
\begin{example}\label{ex_3}
Let $(a_n)_{n\in\mathbb N} = (a_n')_{n\in\mathbb N} :=( \frac12,\frac14,\frac18,\ldots)$,
$(b_n)_{n\in\mathbb N}:=(1,1,1,\ldots)$ and $(b_n')_{n\in\mathbb N}:=(0,1,1,1,\ldots)$. Then, by the terminology
of Lemma \ref{Lemma_5b} (a), one readily verifies $A = \lbrace 1\rbrace$ and
$A'=\lbrace 1-\frac{1}{2^n}\,|\,n\in\mathbb N\rbrace$ and thus $\overline{A} \subsetneq \overline{A'}$.
\end{example}

\subsection{Alternate Proof of the Second Inclusion of Theorem \ref{theorem_4}}\label{app_sec_incl}    
Here, we present an alternative proof of the second inclusion of Theorem \ref{theorem_4} which is more oriented along the lines of the original proof \cite[Corollary 2.4]{article_sunder}. For this purpose, we need doubly stochastic operators. 
An operator $S\in\mathcal B(\mathcal H)$ is called \emph{doubly stochastic} with respect to
the orthonormal basis $(e_n)_{n\in\mathbb N}$ if the following conditions hold:
\begin{itemize}
\item
$\langle e_i,Se_j\rangle\geq 0$ for all $i,j\in\mathbb N$.\vspace{4pt}
\item
$\sum_{i=1}^\infty \langle e_i,Se_j\rangle=1$ for all $j\in\mathbb N$.\vspace{4pt}
\item
$\sum_{j=1}^\infty \langle e_i,Se_j\rangle=1$ for all $i\in\mathbb N$.
\end{itemize}
Furthermore, set
\begin{align*}
\mathcal D(\mathcal H):=\lbrace S\in\mathcal B(\mathcal H)\,|\,S\text{ is doubly stochastic w.r.t. } (e_n)_{n\in\mathbb N}\rbrace\,.
\end{align*}
Although $\mathcal D(\mathcal H)$ is not invariant under unitary conjugations (as simple finite
dimensional examples show) and does in fact depend on $(e_n)_{n\in\mathbb N}$, we avoid to express
this explicitly for simplicity of notation. The set of doubly stochastic operators can be
characterized via permutations as follows. For any permutation $\sigma:\mathbb N\to\mathbb N$
define $U_\sigma\in \mathcal B(\mathcal H)$ by
\begin{align*}
U_\sigma:=\sum_{n=1}^\infty\langle e_n,\cdot\rangle e_{\sigma(n)} \in \mathcal B(\mathcal H)\,.
\end{align*} 
This leads to
\begin{align}\label{eq:6}
\mathcal D(\mathcal H) 
= \overline{\operatorname{conv}(\lbrace U_\sigma\,|\,\sigma:\mathbb N\to\mathbb N\text{ is permutation}\rbrace)}^{\hspace{1pt} w}\,,
\end{align}
where the closure is taken with respect to the weak operator topology on $\mathcal B(\mathcal H)$,
cf.~\cite{article_kendall}.

\begin{proof}[Alternate proof of Theorem \ref{theorem_4} -- second inclusion]
Since both sets, the $C$-spectrum and the $C$-numerical range of $T$, are obviously invariant 
under unitary conjugation, we can assume w.l.o.g.~that $C$ and $T$ can be diagonalized with 
respect to the same orthonormal basis. Now let $w \in W_{C}(T)$. Then there exists unitary 
$U\in\mathcal B(\mathcal H)$ with $w = \operatorname{tr}(CU^\dagger TU)$. As $C=\sum_{i=1}^\infty \gamma_i\langle e_i,\cdot\rangle e_i$ and $T=\sum_{j=1}^\infty\tau_i\langle e_i,\cdot\rangle e_i$, a straightforward computation yields 
\begin{align*}
w = \operatorname{tr}(CU^\dagger TU) 
= \sum_{i,j=1}^\infty\gamma_i\tau_j|\langle e_j, Ue_i \rangle|^2\,.
\end{align*}
Next, we define
\begin{align*}
S := \sum_{i,j=1}^\infty  |\langle e_j,Ue_i\rangle|^2\langle e_i,\cdot\rangle e_j \in\mathcal B(\mathcal H)\,.
\end{align*}
It follows
\begin{align*}
\|Sx\|^2 & = \Big\|\sum_{i,j=1}^\infty  |\langle e_j,Ue_i\rangle|^2\langle e_i,x\rangle e_j\Big\|^2
=\sum_{j=1}^\infty\Big(\sum_{i=1}^\infty  |\langle e_j,Ue_i\rangle|^2|\langle e_i,x\rangle|\Big)^2\\
& \leq \sum_{j=1}^\infty\Big(\sum_{i=1}^\infty |\langle e_j,Ue_i\rangle|^2 \sum_{i=1}^\infty|\langle e_j,Ue_i\rangle|^2|\langle e_i,x\rangle|^2\Big)
= \sum_{j=1}^\infty \sum_{i=1}^\infty|\langle e_j,Ue_i\rangle|^2 |\langle e_i,x\rangle|^2\\
& = \sum_{i=1}^\infty|\langle e_i,x\rangle|^2 \sum_{j=1}^\infty|\langle e_j,Ue_i\rangle|^2 = \|x\|^2\,,
\end{align*}
where the estimate above results from the Cauchy-Schwarz inequality. This shows that
$S$ is indeed bounded. Then the unitarity of $U$ together with
$\langle e_j, S e_i \rangle = |\langle e_j,Ue_i\rangle|^2$ for all $i,j\in\mathbb N$ implies
$S \in\mathcal D(\mathcal H)$ and
\begin{align}\label{eq:S}
w = \sum_{j,i=1}^\infty \gamma_i\tau_j|\langle e_j,Ue_i\rangle|^2
= \sum_{j,i=1}^\infty \gamma_i\tau_j\langle e_j,Se_i\rangle\,.
\end{align}
Moreover, the following estimate shows that the right-hand side of \eqref{eq:S} converges absolutely and,
therefore, the order of summation can be interchanged:
\begin{align*}
\sum_{i,j=1}^\infty|\gamma_i| \, |\tau_j| \, |\langle e_j,Ue_i\rangle|^2
\leq \Vert T\Vert \sum_{i=1}^\infty|\gamma_i| \sum_{j=1}^\infty|\langle e_j,Ue_i\rangle|^2=\Vert T\Vert\nu_1(C)<\infty
\end{align*}
Now let $\varepsilon > 0$ be given. Then there exists $N \in \mathbb N$ such that 
\begin{align}
\sum_{i=N+1}^\infty\sum_{j=1}^\infty|\gamma_i| \, |\tau_j| \, |\langle e_j,Ue_i\rangle|^2
&\leq \Vert T\Vert \sum_{i=N+1}^\infty|\gamma_i|
< \frac{\varepsilon}{5}\,,\label{est_1}\\
\sum_{j=N+1}^\infty\sum_{i=1}^\infty|\gamma_i| \, |\tau_j| \, |\langle e_j,Ue_i\rangle|^2
&\leq \nu_1(C) \, \max_{j \geq N+1}|\tau_j| < \frac{\varepsilon}{5}\,,\label{est_2}
\end{align}
and
\begin{align}
\sum_{i=N+1}^\infty |\gamma_i||\tau_{\sigma(i)}| &\leq\Vert T\Vert\sum_{i=N+1}^\infty |\gamma_i|<\frac{\varepsilon}{5} \,, \label{est_3}\\
\sum_{j= N+1}^\infty|\gamma_{\sigma^{-1}(j)}||\tau_{j}|& \leq
\nu_1(C) \max_{j \geq N+1}|\tau_j| < \frac{\varepsilon}{5}\,.\label{est_4}
\end{align}
Note that \eqref{est_3} and \eqref{est_4} are uniform in $\sigma$, i.e.~$N \in \mathbb N$
can be chosen such that \eqref{est_3} and \eqref{est_4} hold for all permutations 
$\sigma: \mathbb N \to \mathbb N$.
Moreover, (\ref{eq:6}) guarantees the existence of finitely many permutations $\sigma_1, \dots ,\sigma_L$ 
and positive scalars $\alpha_1, \dots, \alpha_L > 0$ with $\sum_{k=1}^L\alpha_k = 1$ such that
\begin{align}\label{D_H_estimate}
\Big|\langle e_j,Se_i \rangle - \Big\langle e_j, \sum_{k=1}^L\alpha_kU_{\sigma_k}e_i \Big\rangle\Big| 
< \frac{\varepsilon}{5 \kappa N^2}
\end{align}
for all $i,j\in \lbrace 1,\ldots, N\rbrace$ and $\kappa := 1 + \max_{i,j\in \lbrace 1,\ldots, N\rbrace}|\gamma_i \tau_j|$. Now due to the
estimates \eqref{est_1} -- \eqref{D_H_estimate} we obtain
\begin{align*}
w = \sum_{i,j=1}^\infty\gamma_i\tau_j \langle e_j, Se_i \rangle
& = \sum_{i,j=1}^N \gamma_i\tau_j \langle e_j, Se_i \rangle + \Delta_1\\
& = \sum_{i,j=1}^N \sum_{k=1}^L \gamma_i\tau_j\alpha_k \langle e_j, U_{\sigma_k}e_i \rangle + \Delta_1 + \Delta_2\\
& = \sum_{k=1}^L \alpha_k \sum_{i,j=1}^N  \gamma_i\tau_j \langle e_j, e_{\sigma_k(i)} \rangle 
+ \Delta_1 + \Delta_2\\
& = \sum_{k=1}^L \alpha_k \sum_{i=1}^\infty  \gamma_i\tau_{\sigma_k(i)} 
+ \Delta_1 + \Delta_2 + \Delta_3
\end{align*}
with
\begin{align*}
|\Delta_1| & \leq  \sum_{i=N+1}^\infty\sum_{j=1}^\infty|\gamma_i| \, |\tau_j| \, |\langle e_j,Ue_i\rangle|^2
+ \sum_{j=N+1}^\infty\sum_{i=1}^\infty|\gamma_i| \, |\tau_j| \, |\langle e_j,Ue_i\rangle|^2
< \frac{2\varepsilon}{5}\,,\\
|\Delta_2| & \leq  \kappa \sum_{i,j=1}^N \Big|\langle e_j,Se_i \rangle - \Big\langle e_j, \sum_{k=1}^L\alpha_kU_{\sigma_k}e_i \Big\rangle\Big| 
< \frac{\varepsilon}{5}\,,\\
|\Delta_3| &  \leq \sum_{k=1}^L \alpha_k 
\Big| \sum_{i,j=1}^N\gamma_i\tau_j\delta(j,\sigma_k(i))\mp \sum_{i=1}^\infty\sum_{j=1}^N \gamma_i\tau_j\delta(j,\sigma_k(i)) - \sum_{i=1}^\infty\gamma_i\tau_{\sigma(i)} \Big|   \\
&\leq \sum_{k=1}^L \alpha_k \Big( \sum_{i=N+1}^\infty |\gamma_i||\tau_{\sigma_k(i)}|+  \sum_{j=N+1}^\infty|\gamma_{\sigma^{-1}(j)}||\tau_{j}|\Big) < \frac{2\varepsilon}{5}\,.
\end{align*}
Hence, it follows
\begin{align*}
\Big|w - \sum_{k=1}^L \alpha_k \sum_{i=1}^\infty  \gamma_i\tau_{\sigma_k(i)}\Big| < \varepsilon
\end{align*}
and thus $w \in \overline{\operatorname{conv}(P_C(T))}$. Finally, as the convex hull of a compact subset of $\mathbb R^n$ is compact, one has
\begin{align*}
\overline{\operatorname{conv}(P_C(T))}\subseteq \overline{\operatorname{conv}(\overline{P_C(T)})}=\operatorname{conv}(\overline{P_C(T)})\subseteq \overline{\operatorname{conv}(P_C(T))}
\end{align*}
where the last inclusion can be seen easily.
\end{proof}

\bigskip

\textbf{Acknowledgements.} The authors are grateful for valuable and constructive comments by Chi-Kwong Li, Thomas Schulte-Herbr\"uggen and the anonymous referee during the preparation of this manuscript. This work was supported in part by the Excellence Network of Bavaria (ENB) through ExQM.
\bibliographystyle{tfnlm}

\bibliography{the_C_numerical_range_in_infinite_dimensions_arxiv}
\end{document}